
\documentclass{amsart}

\usepackage{amsmath,amssymb,amsbsy,amsthm,amsfonts,mathtools,dsfont,mathrsfs}

\usepackage{enumitem}
\usepackage{wrapfig}
\usepackage[top=3.5cm,bottom=3.5cm,left=3.5cm,right=3.5cm]{geometry}

\usepackage{graphicx, caption}












%
%
%
%
%
%
%


\allowdisplaybreaks[3]

\DeclareMathOperator{\derivaceM}{d}

\newcommand{\N}{\mathbb{N}}
\newcommand{\R}{\mathbb{R}}

\newcommand{\tr}{\operatorname{Tr}}


\begin{document}


\def\note#1{\marginpar{\small #1}}
\def\tens#1{\pmb{\mathsf{#1}}}
\def\vec#1{\boldsymbol{#1}}
\def\norm#1{\left|\!\left| #1 \right|\!\right|}
\def\fnorm#1{|\!| #1 |\!|}
\def\abs#1{\left| #1 \right|}
\def\ti{\text{I}}
\def\tii{\text{I\!I}}
\def\tiii{\text{I\!I\!I}}

\newcommand{\loc}{{\rm loc}}
\def\diver{\mathop{\mathrm{div}}\nolimits}
\def\grad{\mathop{\mathrm{grad}}\nolimits}
\def\Div{\mathop{\mathrm{Div}}\nolimits}
\def\Grad{\mathop{\mathrm{Grad}}\nolimits}
\def\cof{\mathop{\mathrm{cof}}\nolimits}
\def\det{\mathop{\mathrm{det}}\nolimits}
\def\lin{\mathop{\mathrm{span}}\nolimits}
\def\pr{\noindent \textbf{Proof: }}

\def\pp#1#2{\frac{\partial #1}{\partial #2}}
\def\dd#1#2{\frac{\d #1}{\d #2}}
\def\vec#1{\boldsymbol{#1}}

\def\0{\vec{0}}
\def\A{\mathcal{A}}
\def\B{\mathcal{B}}
\def\b{\vec{b}}
\def\C{\mathcal{C}}
\def\c{\vec{c}}
\def\D{\vec{Dv}}
\def\DD{\vec{D}}
\def\e{\varepsilon}
\def\f{\vec{f}}
\def\F{\vec{F}}
\def\g{\vec{g}}
\def\G{\vec{G}}
\def\h{\vec{h}}
\def\I{\vec{I}}
\def\k{\vec{k}}
\def\n{\vec{n}}
\def\NN{\mathcal{N}}
\def\S{\vec{S}}
\def\s{\vec{s}}
\def\sg{\vec{\sigma}}
\def\T{\vec{T}}
\def\u{\vec{u}}
\def\vp{\vec{\varphi}}
\def\vv{\vec{v}}
\def\w{\vec{w}}
\def\W{\vec{W}}
\def\x{\vec{x}}
\def\z{\vec{z}}
\def\Z{\vec{Z}}

\def\Ae{\A_\e}
\def\Aee{\Ae^\e}
\def\Be{\B_\e}
\def\Bee{\Be^\e}
\def\De{\DD^\e}
\def\Dve{\D^\e}
\def\oD{\overline{\DD}}
\def\tD{\tilde{\DD}}
\def\Dn{\DD^\e}
\def\Dno{\overline{\Dn}}
\def\Dnt{\tilde{\Dn}}
\def\Dm{\DD^\eta}
\def\Dmo{\overline{\Dm}}
\def\Dmt{\tilde{\Dm}}
\def\Se{\S^\e}
\def\se{\sg^\e}
\def\ose{\overline{\se}}
\def\oS{\overline{\S}}
\def\tS{\tilde{\S}}
\def\Sn{\S^\e}
\def\Sno{\overline{\Sn}}
\def\Snt{\tilde{\Sn}}
\def\Sm{\S^\eta}
\def\Smo{\overline{\Sm}}
\def\Smt{\tilde{\Sm}}
\def\U{\mathcal{U}}
\def\ve{\vv^\e}
\def\ove{\overline{\ve}}
\def\vd{\vv^\delta}
\def\sd{\sg^\delta}
\def\Sd{\S^\delta}
\def\Dd{\D^\delta}

\def\Wnd#1{W^{1,#1}_{\n, \diver}}
\def\Wndr{W^{1,r}_{\n, \diver}}

\def\o{\Omega}
\def\po{\partial \Omega}
\def\dt{\frac{d}{dt}}
\def\pt{\partial_t}
\def\ig{\int_{\Gamma} \!}
\def\igt{\int_{\Gamma_t} \!}
\def\io{\!\int_{\Omega} \!}
\def\ipo{\!\int_{\partial \Omega} \!}
\def\iq{\int_{Q} \!}
\def\iqt{\int_{Q_t} \!}
\def\it{\int_0^t \!}
\def\iT{\int_0^T \!}
\def\d{\, \derivaceM\!}

\def\mn{\mathcal{P}}
\def\du{\mathcal{W}}
\def\tr{\text{tr}~}
\def\tow{\rightharpoonup}


\newtheorem{theorem}{Theorem}[section]
\newtheorem{lemma}[theorem]{Lemma}
\newtheorem{proposition}[theorem]{Proposition}
\newtheorem{remark}[theorem]{Remark}
\newtheorem{corollary}[theorem]{Corollary}
\newtheorem{definition}[theorem]{Definition}
\newtheorem{example}[theorem]{Example}
\newtheorem*{theorem*}{Theorem}

\numberwithin{equation}{section}

\title{On the dynamic slip boundary condition for Navier--Stokes-like problems}

\thanks{The research of A.~Abbatiello is supported by Einstein Foundation, Berlin.  A.~Abbatiello is also member of the Italian National Group for the Mathematical Physics (GNFM) of INdAM. M. Bul\'{\i}\v{c}ek acknowledges the support of the project  No. 20-11027X financed by Czech Science Foundation (GA\v{C}R). M. Bul\'{\i}\v{c}ek is member of the Jind\v{r}ich Ne\v{c}as Center for Mathematical Modelling. E.~Maringov\'a acknowledges support from Charles University Research program UNCE/SCI/023, the grant SVV-2020-260583 by the Ministry of Education, Youth and Sports, Czech Republic and from the Austrian Science Fund (FWF), grants P30000, W1245, and F65.}

\author[A.~Abbatiello]{Anna Abbatiello}
\address{Institut f\"ur Mathematik, Technische Universit\"at Berlin, Stra{\ss}e des 17. Juni 136, 10623 Berlin-Charlottenburg, Germany}
\email{anna.abbatiello@tu-berlin.de}

\author[M.~Bul\'{i}\v{c}ek]{Miroslav Bul\'i\v{c}ek}
\address{Mathematical Institute, Faculty of Mathematics and Physics, Charles University, Sokolovsk\'{a} 83, 186~75, Prague, Czech Republic}
\email{mbul8060@karlin.mff.cuni.cz}

\author[E.~Maringov\'{a}]{Erika Maringov\'{a}}
\address{Institute for Analysis and Scientific Computing, Vienna University of Technology, Wiedner Hauptstr. 8-10, 1040 Vienna, Austria}
\email{erika.maringova@tuwien.ac.at}

\keywords{dynamic slip, weak solution, large data, existence, implicit constitutive theory}
\subjclass[2010]{35Q35,76A05, 76D03}

\begin{abstract}
The choice of the boundary conditions in mechanical problems has to reflect the interaction of the considered material with the surface, despite the assumption of the no-slip condition is preferred to avoid boundary terms in the analysis and slipping effects are usually overlooked. Besides the ``static slip models", there are phenomena not accurately described by them, e.g. in the moment when the slip changes rapidly, the wall shear stress and the slip  can exhibit a sudden overshoot and subsequent  relaxation. When these effects become significant, the so-called dynamic slip phenomenon occurs. We develop a mathematical analysis of Navier-Stokes-like problems with dynamic slip boundary condition, which requires a proper generalisation of the Gelfand triplet and the corresponding function spaces setting.
\end{abstract}

\maketitle

\section{Introduction}\label{intro}

In fluid mechanics, the flows of homogeneous incompressible fluids are driven, at the macroscopic level, by the incompressibility condition, the balance equations for the linear momentum and for the angular momentum complemented with the constitutive equations.  These laws are partial differential equations, describing the change of and the relation between the relevant quantities, namely the velocity of the fluid $\vv$, the symmetric part of the velocity gradient  $2\D:= \left(\nabla \vv + (\nabla \vv)^\top\right)$, the Cauchy stress tensor $\T$ (especially its deviatoric part $\S := \T - \frac{\tr \T}{3}\I$), the pressure $p=\frac{\tr \T}{3}$ and the given density of external body forces  $\f$. The constitutive equations in the bulk explain the material properties of the fluid and on the boundary its interaction with the surroundings.
Such system of PDEs in a bounded domain is completed prescribing the boundary and initial conditions for the crucial variables. The boundary conditions can be viewed as constitutive relations at the interface between two materials. In particular, no-slip and static slip models are not always valid according to measurements (references can be found e.g. in~\cite[Section~6.2]{H}).
Therefore, motivated by \cite{H} and the references therein, our aim in this study is to perform an analysis for the so-called dynamic slip phenomenon on the boundary of the domain. In this setting, we consider the impermeable boundary, i.e., the normal component of the velocity remains zero, while the tangential part of the velocity and of its time derivative is related to the wall shear stress $\s$ via the following formula
\begin{equation}\label{dynamic-slip}
\s = \alpha \sg + \beta \pt \vv
\end{equation}
with $\alpha, \beta>0$ and where $\sg$ represents an auxiliary stress vectorial function, typically dependent on $\vv$. Boundary condition \eqref{dynamic-slip} enables us to capture the non-monotone behaviour of the slip velocity on the boundary. To the best of our knowledge there are no analytical results for models of type \eqref{dynamic-slip}. We prove the long-time and large-data existence results for the evolutionary flows of models that follow the Navier--Stokes-like structure prescribing the dynamic slip condition on the boundary.

We want to emphasize at the very beginning that the presence of the time derivative of the velocity of the fluid in the boundary condition essentially change the setting of the problem. In classical problems of fluid mechanics with Dirichlet or slip boundary conditions the underlying function space is just a subspace of Sobolev or Lebesgue spaces and consists of functions having zero divergence, which in addition  have zero  normal component at the boundary. However, here it would not be a proper space and we would not have a proper Gelfand triplet to introduce the meaning of the time derivative on the boundary. Note that here the difficulty does not come from the convective term and one  has to face the same problem also for the Stokes flow. In addition, in the setting of the present paper, we need to prescribe the initial data also on the boundary, which must be reflected in the analysis. Therefore, we must invent a new function space setting and a new concept of (weak) solution, which satisfy two essential properties:
\begin{itemize}
\item[1)]  The concept of a weak solution is compatible with the notion of classical solution, i.e., a weak solution which is sufficiently regular is also a classical solution.
\item[2)] The concept of a weak solution is compatible with the standard notion of weak solution for Dirichlet or slip boundary conditions.
\end{itemize}
These two tasks can be viewed as a continuation of the program initiated  by Leray \cite{Leray} who developed the mathematical theory for Navier--Stokes equation in the whole $\mathbb{R}^3$ and later extended by Hopf \cite{Hopf}, who developed the concept of a weak solution also in bounded domains with Dirichlet data and established its existence. Hence, our result goes in the spirit of Leray and Hopf and provides the framework for essentially new boundary conditions. Furthermore, although it is not the goal of the paper, the theory built here allows one to introduce a proper notion of the Stokes semigroup related to the dynamic slip models and therefore we have a new concept of mild solutions for dynamic slip models, which may be a starting point for subsequent analysis of dynamic slip models from many different perspectives.

Finally, we want to point out that we do not restrict ourselves to a Navier--Stokes model with linear dynamic slip boundary conditions only, but we consider rather general class of fluids with very complicated rheology. Indeed, we look at the constitutive equations in the bulk and on the boundary in terms of maximal monotone graphs.  Particularly, the constitutive relations between $\S$ and $\D$ are expressed through a maximal monotone $r$-graph, while $\sg$ and $\vv$ are related via a maximal monotone $2$-graph.

\subsection*{Problem formulation}

The theoretical background for our problem is formulated for general dimension $d\geq 1$. However, we apply the results only in the dimension $d=3$. We consider a bounded time interval $(0,T)$ and a Lipschitz domain $\o \subset \R^3$, and denote $Q:=(0,T) \times \o$ the time--space domain and $\Gamma:= (0,T) \times \po$ its spatial boundary. We study the relations between the velocity field $\vv: Q \to \R^3$, the deviator of the Cauchy stress tensor $\S: Q\to \R^{3\times 3}$, and the pressure $p:Q\to \R$. We denote by $\n:\Gamma \to \R^3$ the outward unit normal vector to the boundary and by $\f:Q \to \R^3$ the given external forces. Also, the initial velocity $\vv_0: \overline{\o} \to \R^3$ is given. Finally, we consider parameters $\alpha, \beta \geq 0$ and $r \in (6/5,\infty)$.

The incompressibility condition, the balance of linear momentum, the boundary conditions for the velocity (impermeability of the boundary, implying that the velocity on the boundary only acts in the tangential direction, $\vv_\tau = \vv$ on $\Gamma$) and for the stress (here, we use the standard notation $\s:=-(\S\n)_{\tau}$ for the shear stress, and $\sg$ is an auxiliary function which has no physical meaning, but serves to relate the shear stress $\s$ to the slip velocity $\vv_\tau$ via \eqref{NSrg}), and the initial condition for the velocity\footnote{If $\beta=0$, then \eqref{NSrd} does not see the time derivative and the initial condition \eqref{NSre} is prescribed only in $\o$.} are
\begin{subequations}\label{problem}
\begin{align}
\diver \vv &=0 &&\text{in } Q,\label{NSrb}\\
\pt \vv + \diver(\vv \otimes \vv)- \diver \S + \nabla p &= \f &&\text{in } Q, \label{NSra}\\
\vv\cdot\n&=0 &&\text{on }  \Gamma,\label{NSrc}\\
-(\S \n)_\tau=: \s &= \alpha \sg + \beta \pt \vv &&\text{on } \Gamma,\label{NSrd}\\
\vv(0)&=\vv_0  &&\text{in } \overline{\o},\label{NSre}
\end{align}
As was already mentioned, the balance of angular momentum guarantees the symmetry of the stress tensor $\T=\T^\top$ (and therefore also $\S=\S^\top$), which will be considered and not explicitly repeated throughout the work. To complete the problem, we need to prescribe the constitutive equations relating $\S$ and $\sg$ to $\D$ and $\vv$. In general, we consider the constitutive relations
\begin{align}
(\S,\D) &\in \A &&\text{in } Q,\label{NSrf}\\
(\sg,\vv)  &\in \B &&\text{on } \Gamma,\label{NSrg}
\end{align}
\end{subequations}
where $\A$ is a maximal monotone $r$-graph and $\B$ is a maximal monotone $2$-graph (see Definition~\ref{maxmongrafA}). The value of the parameter~$r$ characterizes the response of the fluid inside the domain (for illustration, see Figure \ref{fig:responses}), on the other hand, the parameters $\alpha$ and $\beta$ determine the slip regime on the boundary (as summarized in \eqref{alfabeta}).

Regarding the boundary condition \eqref{NSrd}, we could in principle prescribe some surface force $\g: \Gamma \to \R^3$. Such an equation would look like
\begin{equation}\label{genP5}
 \alpha \sg + \beta \pt \vv =\s + \g ~\text{ on } \Gamma.
\end{equation}
It would lead to two classes of external forces -- $\f$, representing the external body forces in $Q$ (like for example the gravitational force), and $\g$, representing the surface forces on $\Gamma$. Such a  generalization is definitely possible, and we refer to \eqref{dualityVf} and the description therein for more details. However, on $\Gamma$, the external surface forces usually cause the deformation of the boundary. Since our domain, as well as its boundary, is always given and fixed, such forces are of no physical relevance and should not be considered. Therefore, we simply set $\g\equiv \0$.

\subsection*{Implicit theory - the role of parameter \texorpdfstring{$r$}{r}}

We briefly explain the use of the maximal monotone graphs in the formulation of the constitutive relations and the importance of the parameter $r$. The class of implicit models is commonly described via some function $\G$, or equivalently, via graph $\A$, defined as
\begin{equation}\label{impli}
\G(\S,\D) = \0  ~~\Longleftrightarrow~~ (\S,\D) \in \A.
\end{equation}
For physical reasons, it is natural to impose some assumptions on the function $\G$, or equivalently, on the graph $\A$. Namely, we require that the origin belongs to the graph; that the shear rate is non-decreasing with respect to the shear stress\footnote{This holds for the fluids whose microstructure does not affect their mechanical properties.}; and that the energy dissipation $\xi = \S :\D$ is not only positive, but also provides some useful information - here, $r$ enters the game. Depending on the information, we can talk about different classes of graphs (for details, see the definition of the maximal monotone graph (Definition \ref{maxmongrafA})).

\begin{figure}[!tbp]
\centering
\includegraphics[width=0.9\textwidth]{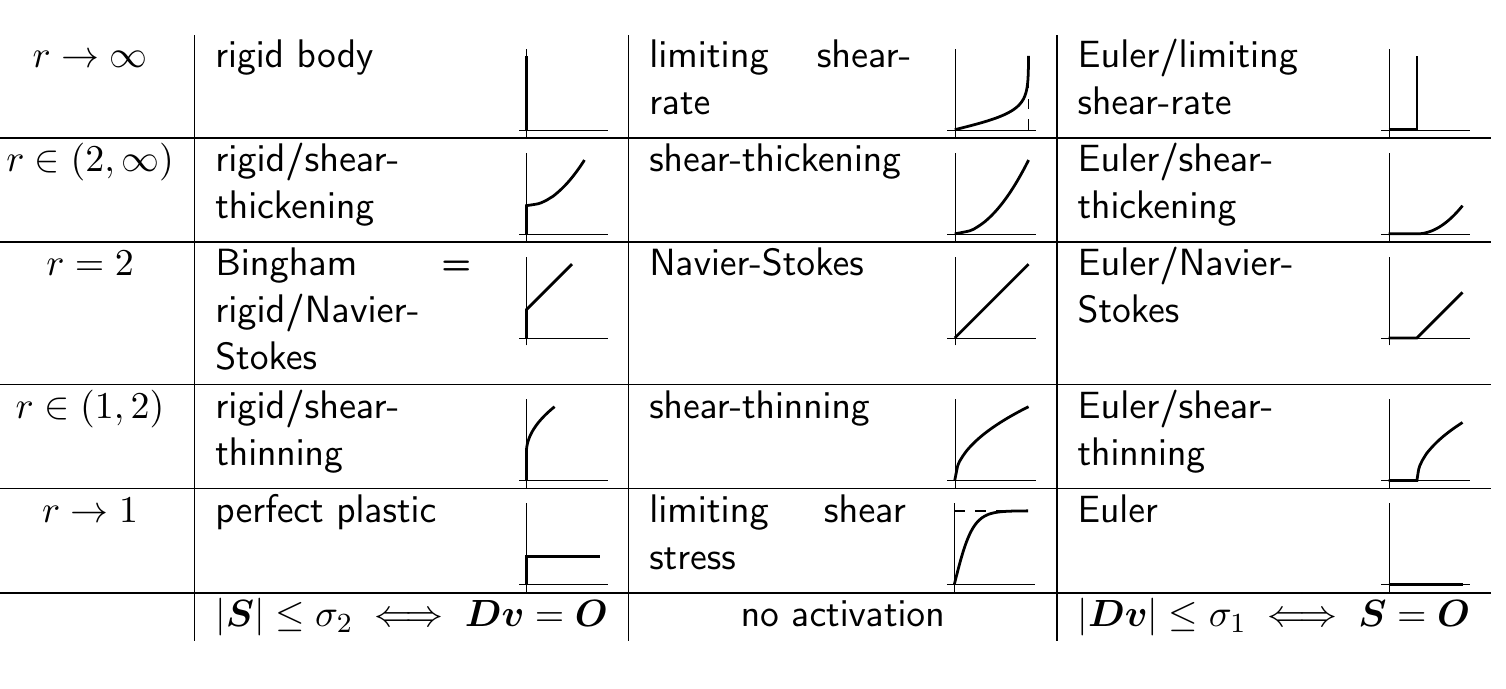}
\caption{A systematic classification of fluid-like responses with respect to the power-law index $r$ and the activating effect. The table includes corresponding $|\S|$ vs $|\D|$ diagrams, where $\sigma_2$ and $\sigma_1$, respectively, are the activating coefficients. Reproduced (and adjusted) with kind permission from  \cite[Table 2.1]{BlMaRa}}
\label{fig:responses}
\end{figure}

The credit for the study of the models of the type~\eqref{impli} is given to the works~\cite{Raj1, Raj2, RajSrin} and~\cite{BGMS1, BGMS2}. A systematic classification of such class of models is provided for example in~\cite{BlMaRa}. In~\cite{BuMaMa}, the authors find equivalent, easy-to-verify conditions for $\G$ to describe a maximal monotone graph according to relation~\eqref{impli} 
and we use the convergence result from~\cite{BuMaMa} later in this work when solving an approximative problem.

The most studied models in the theory of PDEs are of type \eqref{impli}. For the linear model $\S=2\nu_* \D$, $\nu_* \in (0, \infty)$, the existence theory for weak solutions was established in \cite{Leray} for the dimension $d=3$ and in the whole space, later extended to bounded domains and Dirichlet boundary condition in \cite{Hopf}. The non-linear explicit models of the type
\begin{equation*}
\S = 2 \nu(|\D|)\D, ~\text{ with }~ \nu:\R^+ \to \R^+,
\end{equation*}
where the mapping $\D\mapsto \S$ is monotone and continuous, were first studied in \cite{Lady1, Lady2, Lady3}. Especially, the case where
\begin{equation*}
\S = 2\nu_*(\alpha_* + |\D|^2)^{\frac{r-2}{2}} \D, ~\text{ with }~ r \ge 1, \nu_* >0, \alpha_* \in [0,\infty),
\end{equation*}
which for $\alpha_*=0$ is called the power-law model. Ladyzhenskaya established the existence of weak solution for $r\geq 11/5$ in 3-dimensional case (this corresponds to the possibility of testing by the weak solution and thus the use of the classical monotone operator theory). Despite their importance, these results were unsatisfactory since they even did not cover the case $r=2$. Nevertheless, it was the starting point, which finally gave birth to many new methods developed in the theory of non-linear PDEs, and which finally led to the complete theory for all $r> 6/5$ (the power which guarantees the compactness of the convective term in $3$-dimensional setting).

To mention the methods, we recall the \emph{higher differentiability method} from~\cite{MaNeRoRu} giving the existence for~$r>9/5$ for spatially periodic problem; the \emph{$L^\infty$~truncation method} from~\cite{FrMaSt} providing the existence for $r>8/5$ for perfect slip case or spatially periodic problem; up to the \emph{Lipschitz approximation method} in~\cite{DiRuWo} leading to the result for $r>6/5$ for Dirichlet boundary conditions. These results for explicit models were later systematically studied in the works~\cite{BGMS1, BGMS2}, that provide  results in the setting of \emph{maximal monotone graphs} for the same range of exponents (even more, the authors considered the setting of Orlicz spaces) and for the Navier slip boundary conditions. Last, we want to mention the recent result for~$r\le 6/5$ in~\cite{AbFe}, where the authors introduced a very generalized concept of solution suitable for parameters $r\le 6/5$ and proved the existence of such solution. Moreover, they showed that in case that smooth solution exists, their definition provides equivalent notion of solution. However, this concept of solution heavily relies on the fact that the graph~$\mathcal{A}$ comes as\footnote{It was already observed in~\cite{Ro}, that the subdifferential of a convex function generates maximal monotone graph defined by~\cite{Minty}. But in general, maximal monotone graph may not have a potential.}  \emph{a subdifferential of a convex potential}. Moreover, in case $r>6/5$, the concept of a solution introduced in~\cite{AbFe} is much weaker than the concept we deal with in the present paper. In addition, for graphs, which are maximally monotone but do not have a potential, such procedure cannot be used. Therefore, we do not consider the methods developed in~\cite{AbFe} here although they may be easily adapted also to the dynamic slip boundary conditions.

All of the models mentioned above relate the quantities inside the domain $\o$. Finally, the work~\cite{BM} also studies the (implicit) stick-slip condition acting on the boundary $\po$, i.e., for functions $\G$ and $\h$, such that
\begin{equation}\label{BM}
\G(\S, \D) = \0 \quad \text{in}~ \Omega, \quad \h(\s, \vv) = \0 \quad \text{on}~ \partial \Omega.
\end{equation}
This boundary condition can be (for suitable $\h$) viewed as an approximation of the Dirichlet boundary condition $\vv=\0$ on $\Gamma$. In~\cite{MaZa}, the authors studied all models from Figure~\ref{fig:responses} except the limiting ones, also in the case when they depend on the temperature. In the presented result, we use the setting similar to~\eqref{BM}, however, we importantly generalise the boundary condition~\eqref{NSrd} by the use of parameters~$\alpha$ and $\beta$, and by incorporating also the time derivative of the velocity on the boundary.

\subsection*{Dynamic slip model}

We study the phenomenon which has not attracted so much mathematical attention yet. It is called the dynamic slip and terms the response of certain fluids (typically polymers) to a sudden increase and consequent relaxation of the flow velocity, which results in the ``overshoot" of the slip on the boundary - the fluid first starts to slip very quickly, but after the sudden relaxation, it smoothly slows down and stabilizes its slip velocity.

We prove the existence for rather general classes of fluids, not only thanks to the range $r\in(6/5,\infty)$, we also do not prescribe any formulae for the graphs, nor assume the existence of a Borel measurable selection; we only require the maximality and monotonicity according to the definition of the maximal monotone graph (Definition \ref{maxmongrafA}). Moreover, we allow great generality thanks to the presence of the non-negative parameters $\alpha$ and $\beta$. In the following, we provide an explanation of their use in the typical combinations. In~\eqref{NSrd}, we obtain
\begin{subequations}\label{alfabeta}
\begin{align}
&\text{perfect slip} &&\text{if }\alpha=0 \text{ and } \beta = 0, \\
&\text{Navier's slip} &&\text{if }\alpha>0 \text{ and } \beta = 0, \\
&\text{no slip} &&\text{if }\alpha\to +\infty \text{ and } \beta = 0, \\
&\text{dynamic slip} &&\text{if }\alpha>0 \text{ and } \beta > 0.
\end{align}
\end{subequations}
Moreover, in the case when~$\alpha>0$, the structure of the graph~$\B$ plays its role and the model can describe many non-linear and implicit relations.

\subsection*{Motivation}

It is measured that under transient flow (by transient flow we mean the flow in the moment when the slip of the fluid starts), the slip velocity of the polymers exhibits relaxation behaviour in the sense that relaxation of polymer molecules next to solid walls is different compared to that in the bulk, and thus, delayed slip is observed. In such a case, the standard ``static" slip models (in our setting, corresponding to $\beta = 0$) do not follow the characteristics of the flow and therefore it is necessary to include the dynamical response of the fluid in the formulation of the model. In the dynamic slip models, the slip velocity might depend on the past deformation history undergone by the polymer, therefore the use of such dynamic models can explain basic slip rheological data, not otherwise explained by the use of static slip models.

We are not aware of any mathematical work which would analytically study such models. On the other hand, in polymer science, this effect is already well-known. First, referred to as ``retarded" slip boundary condition or ``memory" slip velocity model, it was studied in~\cite{PP} and since then, many other models were proposed, improving the original work and including some other effects. An illustrative overview on these models is presented in~\cite[Section 6.2]{H}. In fact, these works take into account reactions which occur on the boundary and in general relate the slip velocity of the polymer melt with the wall shear stress, the normal stress difference at the wall, the molecular weight, the molecular weight distribution, and the temperature, but also the reaction between bonded and free macromolecules at the interface.

Nonetheless, our model is macroscopic and these effects, as well as the molecular architecture of the polymer, can possibly be incorporated via coefficients~$\alpha$ and~$\beta$, and via appropriate definition of the graph $\B$. The dependence on these coefficients of models with simple geometries (the situation reduced to one dimensional flow) is demonstrated in the simulations in the next part, where the simple shear and the periodic flows are studied.

\subsection*{Result}

In Section~\ref{explicit}, we provide several explicit solutions in simplified geometry to illustrate the role of parameters in the dynamic slip boundary condition - these explicit solutions are computed and studied just for linear problems for simplicity. Next, in Section~\ref{spaces}, we fix the proper function space setting. The key difficulty is to incorporate the time derivative of the velocity on the boundary to a proper function space leading to a reasonable Gelfand triplet. When constructing the Gelfand triplets $V_r \hookrightarrow  H\equiv H^*  \hookrightarrow V_r^*$, we pay close attention to incorporating the boundary term, and the presence of its norm with the coefficient~$\beta$ also in the definition of the norm on the Hilbert space~$H$ (according to~\eqref{H}, the norm is $\|f\|^2_H:= \|f\|^2_{L^2(\o)}+ \beta \|\tr f\|^2_{L^2(\po)}$ for smooth $f$) is highly non-standard. Then, in Section~\ref{graphs}, we recall the basic concepts from the maximal monotone graph setting and finally in Section~\ref{NSr}, we precisely formulate the key result of the paper and provide its proof. 
Next,  for completeness, the Appendix~\ref{basis} is devoted to the study of the basis orthogonal in $V$ and orthonormal in $H$, which is used for defining the Galerkin approximations. Finally, to provide the complete information about the result also at the beginning of the manuscript, we formulate it here, but without any ambition to be rigorous - for precise formulation we refer to Section~\ref{NSr}.
\begin{theorem*}
For any sufficiently smooth data and maximal monotone $r$-graph with $r\in (6/5, \infty)$ there exists a global-in-time weak solution to the system \eqref{problem}. Moreover, the solution satisfies the energy inequality and for $r\in[11/5, \infty)$ the energy equality.
\end{theorem*}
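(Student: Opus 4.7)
The plan is to follow the Leray--Hopf strategy, now adapted to the dynamic slip framework developed in Sections~\ref{spaces}--\ref{graphs}. The starting point is a two-parameter approximation. The outer level is a Galerkin scheme based on the $V$-orthogonal, $H$-orthonormal basis constructed in Appendix~\ref{basis}. The inner level regularizes the maximal monotone $r$-graph $\A$ and $2$-graph $\B$ by continuous, coercive, single-valued selections $\S^\varepsilon = \mathcal{S}^\varepsilon(\D)$ and $\sg^\varepsilon = \mathcal{B}^\varepsilon(\vv_\tau)$, so that the Galerkin system becomes a standard system of ODEs whose local-in-time solvability is classical. The convergence theorem of~\cite{BuMaMa} then allows one to recover $\A$ and $\B$ in the limit $\varepsilon \to 0^+$. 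Continuation of the ODE to the whole interval $[0,T]$ is obtained from the energy estimate derived in the next step.

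Testing the Galerkin equation with the solution itself and exploiting the dynamic slip boundary condition together with the non-standard structure of $H$, whose norm is $\|f\|_H^2 = \|f\|_{L^2(\o)}^2 + \beta\|\tr f\|_{L^2(\po)}^2$, the interior and boundary time derivatives combine into
\begin{equation*}
\langle \pt \vv, \vv \rangle_{V_r^*, V_r} = \tfrac12 \dt \|\vv\|_H^2,
\end{equation*}
while the convective term vanishes by $\diver \vv = 0$ and $\vv \cdot \n = 0$. Combined with the monotonicity of the graphs and Korn's inequality, this produces uniform bounds for $\vv^{N,\varepsilon}$ in $L^\infty(0,T;H) \cap L^r(0,T;V_r)$, for its trace in $L^2(\Gamma)$, for $\S^\varepsilon$ in $L^{r'}(Q)$ and for $\sg^\varepsilon$ in $L^2(\Gamma)$.

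Passing first $N \to \infty$ and then $\varepsilon \to 0^+$, we extract weak limits $\vv, \S, \sg$. Strong convergence of $\vv$ in $L^2(Q)$ and, crucially, of its trace in $L^2(\Gamma)$ follows from the Aubin--Lions lemma applied to the Gelfand triplet $V_r \hookrightarrow H \hookrightarrow V_r^*$, with $\pt \vv$ controlled in $V_r^*$ directly from the equation; compactness of the boundary trace is already encoded in the definition of $H$. The graph inclusions $(\S,\D) \in \A$ and $(\sg,\vv) \in \B$ are then obtained via Minty's trick as soon as one shows that the $\limsup$ of the total dissipation $\int_0^T\!\int_\Omega \S^\varepsilon : \D^\varepsilon + \alpha \int_0^T\!\int_{\po} \sg^\varepsilon \cdot \vv^\varepsilon$ is dominated by the corresponding quantity evaluated on $(\S,\D,\sg,\vv)$.

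This last step is the main obstacle. When $r \geq 11/5$ the weak solution is an admissible test function in the limit equation (the convective term being integrable), so one obtains the desired $\limsup$ inequality and the energy equality in a single stroke. For $r \in (6/5, 11/5)$, $\vv$ is no longer admissible, and one is forced to combine the Minty argument with the divergence-free Lipschitz truncation of~\cite{DiRuWo}, adapted to the present slip setting so that the constraint $(\vv^\varepsilon - \vv) \cdot \n = 0$ is preserved: the approximate equation is tested against a truncated variant of $\vv^\varepsilon - \vv$, the bad part of the convective nonlinearity is controlled on the small exceptional set through the equi-integrability of $|\vv^\varepsilon|^2|\nabla \vv^\varepsilon|$, and the truncation parameter is then sent to infinity. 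This delivers the $\limsup$ inequality, the graph identifications, and the energy inequality stated in the theorem.
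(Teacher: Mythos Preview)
Your outline captures the right ingredients (Galerkin, graph regularization, energy identity in $H$, Aubin--Lions in the new triplet, Minty, Lipschitz truncation), but the architecture differs from the paper's in one essential point, and there is a concrete gap in the low-$r$ step.

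\medskip
\textbf{Structural difference.} The paper does \emph{not} run a two-parameter scheme. It introduces a third approximation: a cut-off $\Phi_\delta(|\vv|^2)$ on the convective term. The order is Galerkin $n\to\infty$ with the $\varepsilon$-regularized $2$-graphs \emph{and} the cut-off (Theorem~\ref{NS2thm}); then $\varepsilon\to 0_+$ still with the cut-off (Theorem~\ref{NSrethm}); and only at the very end $\delta\to 0_+$. The point of the cut-off is to decouple the two hard limits: with $\Phi_\delta$ present the convective term is a bounded compact perturbation, so at the $\varepsilon$-level one can test the limit equation with $\vv$ itself and run the plain Minty argument to identify $\A$ and $\B$; no truncation is needed there. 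The Lipschitz truncation is used \emph{only} in the final $\delta\to 0_+$ step, and it is the parabolic solenoidal version of Breit--Diening--Schwarzacher~\cite{BDS} (Lemma~\ref{BDS} in the paper), applied on compactly contained cylinders $Q_0\subset\subset Q$. Because the truncation never touches $\partial\Omega$, no ``adaptation to the slip setting'' is needed; the boundary graph $\B$ is identified separately and rather cheaply, via Egoroff and the strong $L^1(\Gamma)$ convergence of traces coming from Aubin--Lions.

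\medskip
\textbf{The gap.} Your handling of the range $r\in(6/5,11/5)$ relies on ``equi-integrability of $|\vv^\varepsilon|^2|\nabla\vv^\varepsilon|$''. This is false: with the available bounds $\vv^\varepsilon\in L^{5r/3}(Q)$ and $\nabla\vv^\varepsilon\in L^r(Q)$ one only gets $|\vv^\varepsilon|^2|\nabla\vv^\varepsilon|\in L^{5r/11}(Q)$, which is not even $L^1$ when $r<11/5$. The correct mechanism---and the one the paper uses---is different: the convective term enters the truncation argument as $\G_2^\delta=(\vv\otimes\vv)-(\vd\otimes\vd)\Phi_\delta(|\vd|^2)$, which converges \emph{strongly} in $L^{1+\epsilon}(Q)$ for every $r>6/5$ (because $5r/6>1$); it is therefore harmless against the bounded gradient of the truncated test function. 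The ``bad set'' contribution that must be shown small is $\int_{Q_{\delta,k}}(\Sd-\tilde\S):(\Dd-\D)$, and this is handled by monotonicity of $\A$ together with the measure bound $|Q_{\delta,k}|\lesssim 2^{-k}$ from~\cite{BDS}, not by any equi-integrability of the convective nonlinearity. If you want to merge the $\varepsilon$- and $\delta$-limits as you propose, you would have to redo this part with the correct objects, and also cope with the fact that at the $\varepsilon$-level the a~priori bounds are only in $L^{\min\{r,2\}}$ (Lemma~\ref{lemaodhmin}), which does not directly feed the hypotheses of the BDS lemma when $r>2$. The paper's extra cut-off layer is precisely what sidesteps both issues.
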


\subsection*{Notation}\label{notation}
\noindent\textit{Domains.}
For $d \geq 1$, we consider an open Lipschitz set $\o \subset \R^d$, and for $t \in (0,T]$, we denote $Q_t:=[0,t) \times \o$ and $\Gamma_t := [0,t) \times \po$. Also, we use simply~$Q$ and~$\Gamma$ for~$Q_T$ and~$\Gamma_T$, respectively (this does not concern the part with explicit examples).

\noindent\textit{Functions.}
No explicit distinction between spaces of scalar- and vector-valued functions will be made, but we employ small boldfaced letters to denote vectors and bold capitals for tensors. Outward normal vector is denoted by $\n$, and for any vector-valued function $\z:\po\to \R^d$, the symbol $\z_\tau$ stands for the projection to the tangent plane, i.e., $\z_\tau:= \z- (\z\cdot\n) \n$. If it is clear from the context, we denote the traces of Sobolev functions like the original functions, and if we want to emphasize it, we use the symbol $``\tr\!\!"$. Also, we do not relabel the original sequence when selecting a subsequence. The symbols $``\cdot"$ and $``:"$ stand for the scalar product of vectors or tensors, respectively, and $``\otimes"$ signifies the tensor product. In a time-space domain, the standard differential operators, like gradient ($\nabla$) and divergence ($\diver$), are always related to the spatial variables only. Also, we use standard notation for partial ($\partial_{\cdot}$ or $\partial_{\cdot \cdot}$) and total ($\frac{d}{d \cdot}$) derivatives or just the symbol `$'$' for the derivative of function of one variable. The Kronecker delta is denoted by $\delta_{i,j}$. Generic constants, that depend just on data, are denoted by~$C$ and may vary line to line.

\noindent\textit{Spaces.}
For a Banach space~$X$, its dual is denoted by $X^*$. For $x \in X$ and $x^*\in X^*$, the duality is denoted by $\langle x^*, x \rangle_X$. For $r \in [1,\infty]$, we denote $(L^r(\o), \|\!\cdot\!\|_{L^r(\o)})$ and $(W^{1,r}(\o),\|\!\cdot\!\|_{W^{1,r}(\o)})$ the corresponding Lebesgue and Sobolev spaces with norms. Bochner space is designated by $L^r(0,T;X)$.
For $r \in [1, \infty]$, we set
\begin{align*}
W^{1,r}_{\diver}(\o) &:= \overline{\bigl\{\f \in \C^{0,1}(\overline{\o});\, \diver \f =0 \text{ in }\o \bigr\}}^{\|\cdot\|_{W^{1,r}(\o)}},\\
W^{1,r}_{\n}(\o) &:= \overline{\bigl\{\f \in \C^{0,1}(\overline{\o});\, \f \cdot \n = 0 \text{ on } \po \bigr\}}^{\|\cdot\|_{W^{1,r}(\o)}},\\
W^{1,r}_{\n,\diver}(\o) &:= \overline{\bigl\{\f \in \C^{0,1}(\overline{\o}); \,\f \cdot \n = 0 \text{ on } \po, \diver \f =0 \text{ in }\o \bigr\}}^{\|\cdot\|_{W^{1,r}(\o)}},\\
\C([0,T];X) &:= \{f \in L^{\infty}(0,T;X);\, [0,T] \ni t^n \! \to \! t \Rightarrow f(t^n) \! \to \! f(t) \text{ strongly in $X$}\}, \\
\C_{w}([0,T];X) &:= \{f \in L^{\infty}(0,T;X);\, [0,T] \ni t^n \! \to \! t \Rightarrow f(t^n) \! \tow \! f(t) \text{ weakly in $X$}\}.
\end{align*}
%

\section{Explicit examples}\label{explicit}

We list several prototypes of the problem we want to solve. We provide two explicit examples (without the use of the maximal monotone graphs), where in simple situations, we clearly demonstrate the use of the dynamic slip boundary condition. Analytical computations are sketched and supported by numerical simulations.

The solutions are found more or less in the same way as for the classical slip boundary condition with one proviso - the basis in which we construct the solution corresponds to a different boundary condition. This however changes the properties of the solution drastically, in particular (and it will be also evident from computation), the first few eigenvalues and eigenfunctions are of most importance to give the character of the flow.

The general setting is the same for both examples. For simplicity, both flows act in one direction only, and they differ by the use of the boundary conditions and assumption on pressure, which determines the regime of the flow. In the first case, we talk about the flow induced by moving boundary, whereas in the second case, the pressure initiates a time-periodic flow.

For $h, T >0$, define $Q:=(0, T)\times \R^2\times (0,h)$ and consider the Navier--Stokes problem for an incompressible fluid in a three-dimensional domain, given by the system
\begin{subequations}\label{geneprob}
\begin{align}
\diver \vv &=0 &&\text{in } Q,\\
\partial_t \vv+\diver(\vv \otimes \vv)- \diver \S &=-\nabla p  &&\text{in } Q, \\
\S &= 2 \D = \nabla \vv + \nabla \vv^T &&\text{in } Q,\\
\sg &= \vv &&\text{in } (0,T)\times\mathbb{R}^2\times\{0,h\}.
\end{align}
We look for a solution to the simple shear which is represented by a scalar function $u:(0,T)\times (0,h) \to \R$,
\begin{equation}\label{defu}
\vv(t,\x) := (u(t,x), 0,0),
\end{equation}
\end{subequations}
where variable $x$ of $u$ corresponds to $x_3$ ($\x=(x_1,x_2,x_3)$) of $\vv$. Due to the definition~\eqref{defu}, the condition $\diver \vv = 0$ is automatically satisfied.

\subsection{Flow induced by moving boundary}

For given $\delta$, $0<\delta \ll 1$, the flow between two infinite planes is induced by moving one of them, $\R^2\times\{h\}$, with the velocity $V_\delta(t):=\min\{t/ \delta, 1\}$. It means that for small times, the upper plane accelerates really quickly, and after reaching velocity equal to $1$, it suddenly relaxes and continues to move with this constant velocity. The lower plane, $\R^2\times\{0\}$, does not move. Also, the pressure is only a function of time,
\begin{subequations}\label{movbound}
\begin{equation}
\nabla p= \0.
\end{equation}
We consider the following initial and boundary conditions, representing zero velocity of the fluid everywhere at the beginning as well as on the lower boundary for all times, whereas the velocity on the upper part of the boundary is expressed as a difference between the actual velocity of the fluid and the velocity of the moving plane,
\begin{align}
\vv &= \0 &&\text{in } \{0\}\times\mathbb{R}^2\times (0,h), \\
\vv &= \0 &&\text{in } (0,T)\times\mathbb{R}^2\times\{0\}, \\
\alpha [\sg - (V_\delta,0,0)] + \beta \partial_t [\vv - (V_\delta,0,0)] - \s &= \0 &&\text{in } (0,T)\times\mathbb{R}^2\times\{h\}.
\end{align}
\end{subequations}
Especially, we aim to study the dependence of solution on $\alpha$ and $\beta$ if $\delta \ll 1$ (this condition enhances the sudden acceleration of the boundary at the initial moment). We can reformulate the system \eqref{geneprob}--\eqref{movbound} in terms of function $u$,
\begin{subequations}\label{movboundu}
\begin{align}
\partial_t u(t,x) - \partial_{xx} u(t,x) &=0 &&\text{in } (0,T)\!\times\!(0,h), \label{eqmovbound}\\
u(0,x) &= 0 &&\text{in } (0,h), \label{ini0}\\
u(t,0) &= 0 &&\text{in } (0,T),\label{00} \\
\alpha [u(t,h) - V_\delta(t)] + \beta \partial_t [u(t,h) - V_\delta(t)] +  \partial_x u(t,h) &= 0 &&\text{in } (0,T).\label{bound}
\end{align}
\end{subequations}

We wish to construct a weak solution to \eqref{movboundu} in terms of Fourier series. The crucial step to do so is to properly define the function space for $u$ and properties of its basis. To insure \eqref{00}, let
\begin{equation}\label{simpleV}
V:=\{v \in W^{1,2}(0,h); v(0)=0\}, ~\langle v_1, v_2\rangle_V := \int_0^h \! v_1 v_2 \d x + \beta (v_1 v_2)(h),
\end{equation}
be the function space with duality and take a basis $\{u_i\}_{i\in \N}$ of $V$ which fulfills
\begin{subequations}\label{uprob}
\begin{align}
-u_i''(x)&=\lambda_i^2 u_i(x) &&\text{for } x \in(0,h), \label{lambda} \\
\alpha u_i(h) + u_i'(h) &= \lambda_i^2 \beta u_i(h), &&\text{and} \label{lambdabeta}\\
\left( u_i, u_j\right)_V:=\int_0^h u_i u_j \d x + \beta (u_i u_j)(h)&= \delta_{i,j} &&\text{for all } i,j \in \N, \label{dirac}
\end{align}
\end{subequations}
where we let~\eqref{dirac} define the scalar product in~$V$, and then the basis is orthonormal in~$V$.

We first prove existence of such basis and study the properties of the sequence $\{\lambda_i\}_{i\in\N}$. After that, we use this information to demonstrate the existence of the dynamic slip phenomenon as well as the fact that this effect vanishes as $\beta$ tends to~$0$.

From \eqref{lambda} we know that $u_i$ is of the form $u_i(x) = A_i \sin(\lambda_i x) + B_i \cos(\lambda_i x)$ (for $A_i$, $B_i$ constants), however, due to the condition $u_i(0) = 0$ (according to the definition of $V$ \eqref{simpleV}), this reduces to
\begin{equation} \label{ui}
u_i(x) = A_i \sin(\lambda_i x).
\end{equation}
As we generate a basis, without loss of generality we can assume that $A_i, \lambda_i >0$. Also, using \eqref{lambdabeta} for this $u_i$ we get the condition on $\lambda_i$,
\begin{equation}\label{condli}
(\alpha - \beta \lambda_i^2)\sin(\lambda_i h) = -\lambda_i \cos(\lambda_i h).
\end{equation}
To have an idea about the arrangement of the eigenvalues $\{\lambda_i\}_{i\in\N}$ within $\mathbb{R}^+$, for every $j \in \N_0$, we define an auxiliary function $f_j: [0,2\pi/h] \to \mathbb{R}$ as
\begin{equation*}
f_j(y) := \left(\alpha - \beta \left(y+j\frac{2\pi}{h}\right)^2\right)\sin(y h)  + \left(y+j\frac{2\pi}{h}\right) \cos(y h).
\end{equation*}
For every $j \in \N_0$, there exist at least two solutions to $f_j(y)=0$. In fact, there are at most two, as the following explains,
\begin{equation}\label{solli}
f_j(y) = 0 ~~\Longleftrightarrow~~ \cot(y h) = \beta \left(y+j\frac{2\pi}{h}\right) - \frac{\alpha}{y+j\frac{2\pi}{h}} .
\end{equation}
Here, the function on the right hand side is increasing for every $j$ and cotangent is decreasing on $(0,\frac{\pi}{h})$ and $(\frac{\pi}{h}, \frac{2\pi}{h})$. Therefore,
\begin{equation}\label{locli}
\text{for every } i \in \mathbb{N} \text{ there exists a unique } \lambda_i \in \left((i-1 )\frac{\pi}{h}, i\frac{\pi}{h}\right) \text{ solving \eqref{condli}}.
\end{equation}
Thanks to \eqref{condli} and \eqref{solli},
\begin{equation}\label{negcot}
\beta \lambda_i^2 - \alpha< 0 ~\Leftrightarrow~ \beta \left( i-\frac12 \right)\frac{\pi}{h} - \frac{\alpha}{  \left( i-\frac12 \right)\frac{\pi}{h}}<0 ~\Leftrightarrow~ (2i-1)^2 < \frac{4h^2 \alpha}{\pi^2 \beta}.
\end{equation}
Finally, using  \eqref{condli} in \eqref{dirac}, we obtain the formula for $A_i$,
\begin{equation}\label{condAi}
A_i = \left( \frac{h}{2} + \frac{\alpha + \beta \lambda_i^2}{2\lambda_i^2} \sin^2(\lambda_i h) \right)^{-\frac12}.
\end{equation}
Therefore, the basis $\{u_i\}_{i\in\N}$ exists and is prescribed by \eqref{ui} such that \eqref{condli}, \eqref{locli} and \eqref{condAi} hold.

Next, we look for a weak solution to our system $w(t,x)$ defined by means of this basis, i.e., $w(t,x) = \sum_{i=1}^\infty c_i(t) u_i(x)$ so that for all $\varphi \in W^{1,2}(0,h)$ fulfilling $\varphi(0)=0$ and almost all $t\in (0,T)$, the integral formulation of \eqref{eqmovbound} holds,
\begin{equation*}
\int_0^h \partial_t w \varphi + \partial_x w \varphi' \d x + \left[\alpha (w-V_\delta)\varphi + \beta \partial_t (w-V_\delta)\varphi\right]_{x=h} = 0,
\end{equation*}
where we first used integration by parts and consequently the boundary conditions \eqref{00} and \eqref{bound} (for simplicity, we a~priori assume that our solution is smooth enough so that the integral exists and we can substitute $\langle \pt w, \varphi \rangle_V = \int_0^h \pt w \varphi \d x + \beta (\pt w \varphi)(h)$ according to \eqref{simpleV}).

Now, we set $\varphi := u_j$ and use~\eqref{uprob} to obtain
\begin{equation*}
c_j'(t) + \lambda_j^2 c_j(t) =  (\alpha V_\delta(t) + \beta V_\delta'(t) )u_j(h) ,
\end{equation*}
which, completed with the initial condition $c_j(0) = 0$ (due to \eqref{ini0}) and using the definition of $V_\delta$, turns into
\begin{align}
c_j^{\delta_-}(t) &= \frac{u_j(h)}{\delta \lambda_j^2} \left( \alpha t+ (\beta\lambda_j^2 - \alpha)\frac{e^{-\lambda_j^2 t} -1}{-\lambda_j^2} \right) \simeq \beta u_j(h) \frac{t}{\delta} &&t < \delta \ll 1, \label{td} \\
c_j^{\delta_+}(t) &= \frac{u_j(h)}{\lambda_j^2} \left( \alpha + (\beta\lambda_j^2 - \alpha)e^{-\lambda_j^2 t}\frac{e^{\lambda_j^2 \delta} -1}{\lambda_j^2 \delta} \right) \simeq \frac{u_j(h)}{\lambda_j^2} \left( \alpha + \frac{\beta\lambda_j^2 - \alpha}{e^{-\lambda_j^2 t}} \right) &&\hspace{.6cm} \delta \ll 1 \label{dt},
\end{align}
having defined $c_j^{\delta_-}(t) := c_j(t)\chi_{t < \delta}$ and $c_j^{\delta_+}(t) := c_j(t)\chi_{t\ge \delta}$. It is not difficult to check that $\lim_{t \to \delta_-} c_j^{\delta_-}(t) = c_j^{\delta_+}(\delta)$ and that $c_j(t) = c_j^{\delta_-}(t)+c_j^{\delta_+}(t)$ is continuous. Finally,
\begin{align}\label{wtx}
\!\!w(t,x) = \left\{
\begin{aligned}
&\sum_{i=1}^\infty \frac{1}{\delta \lambda_i^2}\left( \alpha t+ (\beta\lambda_i^2 - \alpha)\frac{e^{-\lambda_i^2 t} -1}{-\lambda_i^2} \right) u_i(h) u_i(x), &&t \in (0, \delta), \\
&\sum_{i=1}^\infty  \frac{1}{\lambda_i^2} \left( \alpha + (\beta\lambda_i^2 - \alpha)e^{-\lambda_i^2 t}\frac{e^{\lambda_i^2 \delta} -1}{\lambda_i^2 \delta} \right) u_i(h) u_i(x), &&t \in (\delta, T).
\end{aligned}\right.
\end{align}
For us, it is important to study the behaviour of this solution on the boundary where $x=h$ and for $\delta \to 0_+$. Applying \eqref{condli}, \eqref{ui}, and \eqref{condAi} in \eqref{wtx} for $x=h$, and proceeding with $\delta \to 0_+$, we get that
\begin{equation}\label{wth}
w(t,h) \to \sum_{i=1}^\infty \frac{2}{h(\lambda_i^2 + (\beta \lambda_i^2 - \alpha)^2) + \alpha + \beta \lambda_i^2} \left(\alpha + (\beta\lambda_i^2 - \alpha)e^{-\lambda_i^2 t} \right).
\end{equation}
In \eqref{wth}, we already neglected the first part of \eqref{wtx}, where $t\in(0,\delta)$, and also used that $\lim_{\delta \to 0_+}(e^{\lambda_i^2 \delta} -1)/(\lambda_i^2 \delta) =1$.

Once we have a weak solution, we want to check whether it converges to the stationary one, $\bar{w}(x) = Ax=\sum_{i=1}^\infty \bar{c}_i u_i(x)$, as $t =T \to \infty$. Here, $A$ is a constant and $\bar{c}_i$ are coefficients that satisfy
\begin{equation*}
\bar{c}_i = \left(\sum_{j=1}^\infty \bar{c}_j u_j, u_i \right)_V = \left(\bar{w} , u_i \right)_V = \frac{u_i(h)}{\lambda_i^2}A (\alpha h + 1).
\end{equation*}
The difference $w(t,x) - \bar{w}(x)$ vanishes for $t \to \infty$ if $A := \frac{\alpha}{\alpha h + 1}$,
\begin{equation*}
w(t,x) \to \bar{w}(x)= \frac{\alpha}{\alpha h + 1}x.
\end{equation*}
This limiting solution corresponds to the stationary solution with the standard Navier slip response. Therefore, to study the dynamic slip phenomenon, one pays attention to the difference of these solutions on the boundary for small (although relevant) times,
\begin{equation}\label{diff}
w(t,h) - \bar{w}(h) = \sum_{i=1}^\infty \frac{u_i^2(h)}{\lambda_i^2} \left( (\beta\lambda_i^2 - \alpha )e^{-\lambda_i^2 t} \right) \text{ for } t\geq \delta.
\end{equation}
From the relation \eqref{diff} one can see that for small times, the impact of the several first terms is much more important than that of the terms for larger values of $\lambda_i$ (note that the sequence $\{\lambda_i\}_{i\in\N}$ is increasing as it is arranged according to \eqref{locli}). Also, as discussed in \eqref{negcot}, these first terms (their number depends on $\alpha, \beta$ and $h$) in the sum \eqref{diff} can be negative, whereas for larger values of $i$ they become positive.

\subsection*{Simulations}

The importance of the number of negative terms in \eqref{diff} can be demonstrated using the computational software. In what follows, we numerically computed the sequence $\{\lambda_i\}_{i=1}^{10}$ and present the corresponding graphs of the solution $1 - w(t,h)$ (which is $V_\delta(t) - w(t,h)$ for $\delta \to 0_+$) in several situations; namely for $\alpha/ \beta \in \{1/4, 20, +\infty\}$. Then, we compare the graphs for fixed $\alpha$ and three different values of $\beta$ with the graph of the stationary solution $1-\bar{w}(h)$ (which only depends on $\alpha$ and $h$ and therefore is the same for the three solutions). In all simulations, we fixed the constant $h=\pi$.

In Figure \ref{fig:compareslip}, we are interested in the response for short times, therefore, we set $T=1$. From \eqref{negcot} we can read that the number of terms for which $\beta\lambda_i^2 - \alpha <0$ is
\begin{equation}\label{numneg}
\NN:=\left| \left\{i; \beta\lambda_i^2 - \alpha <0 \right\}\right| = \max\left\{i; i< \sqrt{\frac{\alpha}{\beta}}+\frac{1}{2}\right\}.
\end{equation}

First simulation corresponds to $\alpha = 1$ and $\beta = 4$. Due to \eqref{numneg}, $\NN=0$ and we can see that every term in \eqref{diff} (which correspond to the difference $1-\bar{w}(h) - (1 - w(t,h))$) is positive and therefore the graph of $1 - w(t,h)$ monotonically increases while approaching the stationary solution $1-\bar{w}(h)$.

In the second situation, we used $\alpha = 10$ and $\beta = 0.5$. According to \eqref{numneg}, $\NN=4$. This combination allows to model the dynamic slip phenomenon, as it clearly demonstrates that the behaviour of the fluid on the boundary is not monotone. Indeed, the relative velocity  $1 - w(t,h)$ first continues to increase and subsequently slows down and starts decreasing to approach the stationary solution $1-\bar{w}(h)$.

Finally, we used $\alpha = 10$ and $\beta = 0$. Such a choice corresponds to the Navier slip situation, as the effect of the time derivative in \eqref{bound} is cancelled. Also, consistently with \eqref{numneg}, $\NN \to +\infty$ and all terms in \eqref{diff} are negative. This results in significant jump at origin and subsequently, the graph of the solution $1 - w(t,h)$ immediately monotonically decreases as it approaches its stationary solution.

\begin{figure}[h]
\begin{minipage}{0.65\textwidth}
\centering
\includegraphics[width=\textwidth]{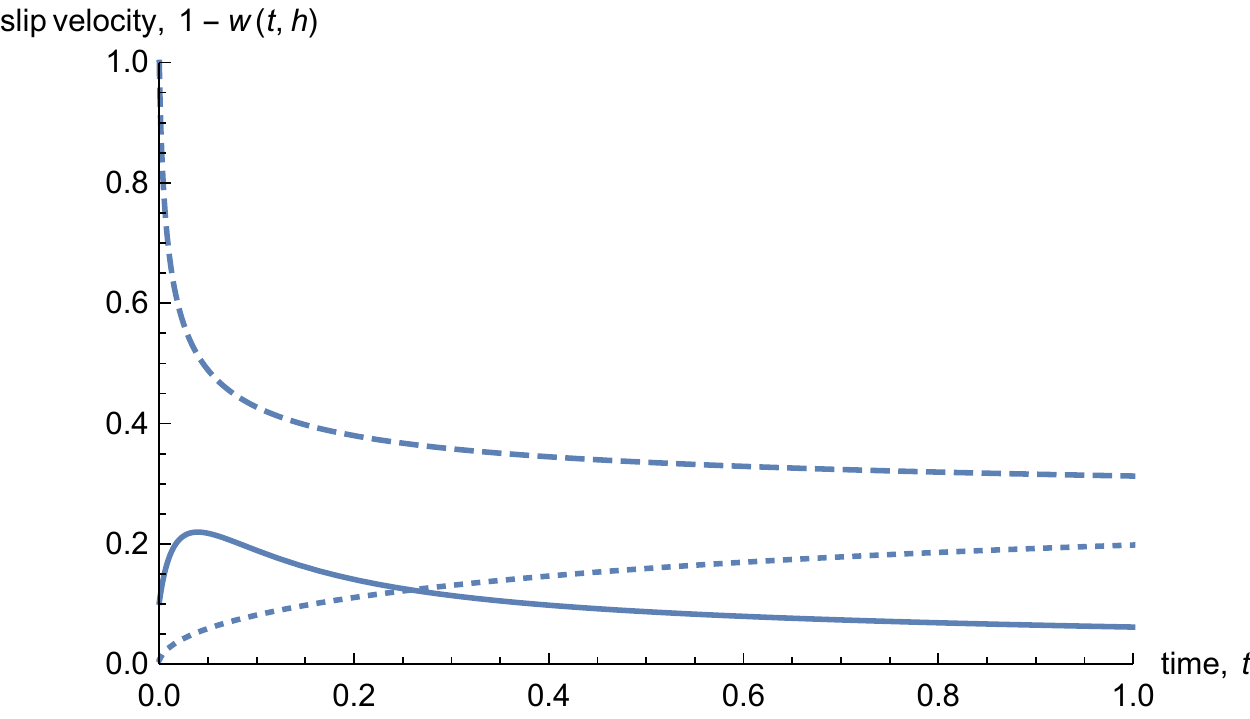}
\end{minipage}%
\begin{minipage}{0.35\textwidth}
\captionsetup{width=\linewidth}
\caption{Comparison of the slip velocities $1 - w(t,h)$ for different values of $\alpha$ and $\beta$. For the dotted style $(\alpha, \beta, \NN) = (1,4,0)$; for the full line $(\alpha, \beta, \NN) = (10,0.5, 4)$ (two dynamic slips) and for the dashed style $(\alpha, \beta, \NN) = (10,0, + \infty)$ (Navier's slip).}
\label{fig:compareslip}
\end{minipage}
\end{figure}

\begin{figure}[h]
\begin{minipage}{0.65\textwidth}
\centering
\includegraphics[width=\textwidth]{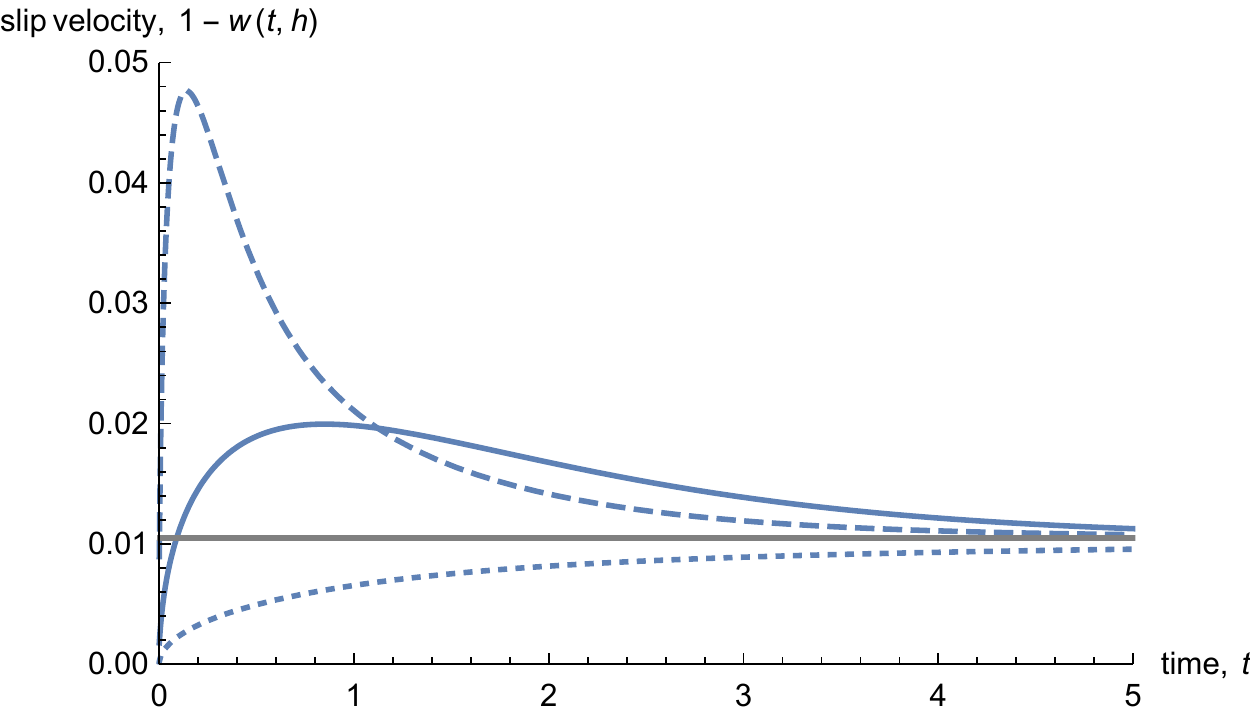}
\end{minipage}%
\begin{minipage}{0.35\textwidth}
\captionsetup{width=\linewidth}
\centering
\caption{Comparison of the slip velocities $1 - w(t,h)$ for fixed value of $\alpha=30$ and different values of $\beta$. For the dashed style $(\beta, \NN) = (5,2)$; for the full line $(\beta, \NN) = (30,1)$ and for the dotted line $(\beta, \NN) = (150,0)$.}
\label{fig:compareconv}
\end{minipage}
\end{figure}

These simulations, especially comparison of the second and the third one, clearly explain why in our modelled situation with significant impulse in the beginning (which in the picture corresponds to the jump at $t=0$) and sudden relaxation thereafter, it is much  more natural to expect the smooth dynamic slip response than the sharp Navier slip.

The second simulation indicates the importance of the value of $\beta$ for the dynamic slip, as well as the convergence property of the solutions. For this reason, we set $T=5$, which is large enough to see the converging tendency. We fix $\alpha=30$ and compare the graphs of $1 - w(t,h)$ for $\beta \in \{5,30,150\}$ against the stationary solution $\bar{w}(h) = \alpha h/(\alpha h +1)$. Since the stationary solution is independent of $\beta$, we can also see that all three graphs converge to this stationary solution. The results are presented in the Figure \ref{fig:compareconv}.

\subsection{Periodic flow induced by pressure}

In the second example, we consider the pressure of the form
\begin{subequations}\label{perflow}
\begin{equation}
p= x_1 \cos \left( \frac{2 \pi t}{T} \right),
\end{equation}
which induces a time-wise periodic flow, so that the initial and boundary conditions are
\begin{align}
\vv(0, \x) &= \vv(T, \x) &&\text{in } \mathbb{R}^2\times (0,h), \\
\vv &= \0 &&\text{in } (0,T)\times\mathbb{R}^2\times\{0\}, \\
\alpha \sg + \beta \partial_t \vv - \s &= \0 &&\text{in } (0,T)\times\mathbb{R}^2\times\{h\}.
\end{align}
\end{subequations}
By means of $u$, we can reformulate the system \eqref{geneprob}, \eqref{perflow} to
\begin{subequations}\label{perflowu}
\begin{align}
\partial_t u(t,x) - \partial_{xx} u(t,x) &= - \cos \left( \frac{2 \pi t}{T} \right)  &&\text{in } (0,T)\times(0,h), \label{1scos}\\
u(0,x) &= u(T,x) &&\text{in } (0,h), \label{peri}\\
u(t,0) &= 0 &&\text{in } (0,T),\label{u00} \\
\alpha u(t,h) + \beta \partial_t u(t,h) + \partial_x u(t,h) &= 0 &&\text{in } (0,T).\label{uth}
\end{align}
\end{subequations}

Similarly as in the previous part, we wish to construct a weak solution to \eqref{perflowu} and thanks to \eqref{u00} (which is identical to \eqref{00}), we can work with exactly the same space $V$ as before, defined in \eqref{simpleV}, and its base $\{u_i \}_{i \in \N}$ satisfying \eqref{uprob} and prescribed by \eqref{ui} ($u_i(x) = A_i \sin(\lambda_i x)$) such that \eqref{condli}, \eqref{locli} and \eqref{condAi} hold.

The essence of the problem now lies in finding the coefficients $c_i$, where our weak solution to \eqref{perflowu} is again of the form $w(t,x)= \sum_{i=1}^\infty c_i(t)u_i(x)$ and for all $\varphi \in W^{1,2}(0,h)$ and almost all $t\in(0,T)$ satisfies
\begin{equation}
\int_0^h \pt w \varphi + \partial_x w \varphi' \d x + \left[(\alpha w+ \beta \pt w)\varphi\right]_{x=h} = -\cos \left( \frac{2 \pi t}{T} \right) \int_0^h \varphi \d x,
\end{equation}
which we obtained by multiplying \eqref{1scos} by $\varphi$, using integration by parts, \eqref{u00}, and \eqref{uth}. Now, we set $\varphi := u_j$ for $j \in \N$ and use the definition of $w$, the orthonormality of the basis \eqref{dirac}, \eqref{lambdabeta} and \eqref{ui} to get
\begin{equation}
c_j'(t) + \lambda_j^2 c_j(t) = - \cos \left( \frac{2 \pi t}{T} \right) \int_0^h u_j \d x = \frac{A_j}{\lambda_j} (\cos (\lambda_j h)-1) \cos \left( \frac{2 \pi t}{T} \right).
\end{equation}
Solving this equation, using that $c_j(0) = c_j(T)$ and that
\begin{align*}
\int_0^t \! e^{\lambda_j^2 \tau} \cos \left( \frac{2 \pi \tau}{T} \right) \!\d \tau\!= \frac{2 \pi T}{4 \pi^2 + \lambda_j^4 T^2} \left[e^{\lambda_j^2 t} \sin \left( \frac{2 \pi t}{T} \right) + \frac{\lambda_j^2 T}{2 \pi} \left(e^{\lambda_j^2 t} \cos \left( \frac{2 \pi t}{T} \right)-\!1 \!\right) \right],
\end{align*}
we obtain the formula for the initial condition $c_j(0)$ and for $c_j(t)$,
\begin{subequations}\label{cj}
\begin{align}
c_j(0) &= c_j(T)= \frac{\lambda_j T^2}{4\pi^2 + \lambda_j^4 T^2} A_j (\cos (\lambda_j h)-1), \label{cj0}\\
c_j(t) &= \frac{2 \pi T}{4 \pi^2 + \lambda_j^4 T^2} \frac{A_j}{\lambda_j}  (\cos(\lambda_j h)-1) \left[\sin \left( \frac{2 \pi t}{T} \right) + \frac{\lambda_j^2 T}{2 \pi} \cos \left( \frac{2 \pi t}{T} \right) \right]. \label{cjt}
\end{align}
\end{subequations}
To sum up, using \eqref{cjt}, \eqref{ui} and \eqref{condAi}, the solution satisfies
\begin{align}
w(t,x) &=  \sum_{i=1}^\infty c_i(t) A_i \sin(\lambda_i x),\nonumber \\
\partial_x w(t,h) &=  \sum_{i=1}^\infty c_i(t) \lambda_i A_i   \cos(\lambda_i h)\label{xwth},
\end{align}
where the spatial derivative represents the wall shear stress, which is the quantity that we finally compare.

Once we have the weak solution $w(t,x)$, similarly as before, we want to compare it with some reference solution - in this case, the periodic solution with the Dirichlet boundary condition, i.e., $\bar{w}(t,x) = \sum_{i=1}^\infty \bar{c}_i(t) \bar{u}_i(x)$, such that $\bar{w}(t,x)$ is a weak solution to
\begin{subequations}\label{perflowbar}
\begin{align}
\partial_t u(t,x) - \partial_{xx} u(t,x) &= - \cos \left( \frac{2 \pi t}{T} \right)  &&\text{in } (0,T)\times(0,h), \label{u1scos}\\
u(0,x) &= u(T,x) &&\text{in } (0,h), \label{uperi}\\
u(t,0) &= u(t,h) = 0 &&\text{in } (0,T).\label{u0h0}
\end{align}
\end{subequations}
Then, for every $i\in\N$, the eigenfunctions $\bar{u}_i(x)$ of $W^{1,2}(0,h)$ with their eigenvalues $\bar{\lambda}_i$ solve
\begin{subequations}
\begin{align}
-\bar{u}_i''(x)&=\bar{\lambda}_i^2 \bar{u}_i(x) &&\text{for } x \in(0,h), \label{uurovn} \\
\bar{u}_i(0) &= \bar{u}_i(h) =0 &&\text{and} \label{uuokr}\\
\int_0^h \bar{u}_i \bar{u}_j \d x &= \delta_{i,j} &&\text{for all } i,j \in \N, \label{uudirac}
\end{align}
\end{subequations}
to form a basis in $W^{1,2}_0(0,h)$, and $\bar{c}_i(t)$ is computed using $\bar{u}_i(x)$. From \eqref{uurovn} we know that the eigenfunctions are of the form
\begin{equation}\label{barui}
\begin{aligned}
\bar{u}_i(x) = \bar{A}_i \sin(\bar{\lambda}_i x) + \bar{B}_i \cos (\bar{\lambda}_i x), \text{ where } \\
\bar{B}_i = 0,~\bar{\lambda}_i = \frac{i \pi}{h} ~\text{ and }~\bar{A}_i = \sqrt{\frac{2}{h}},
\end{aligned}
\end{equation}
using \eqref{uuokr} and \eqref{uudirac}. Altogether,
\begin{equation}\label{uui}
\bar{u}_i(x) = \bar{A}_i \sin (\bar{\lambda}_i x)= \sqrt{\frac{2}{h}} \sin\left(\frac{i \pi}{h} x\right).
\end{equation}
Then, for all $\varphi \in W^{1,2}_0(0,h)$ and almost all $t\in(0,T)$, $\bar{w}$ satisfies
\begin{equation}
\int_0^h \pt \bar{w} \varphi + \partial_x \bar{w} \varphi' \d x = -\cos \left( \frac{2 \pi t}{T} \right) \int_0^h \varphi \d x,
\end{equation}
and for $\varphi:=\bar{u}_j$, $j \in \N$, repeating a very similar procedure like before, we obtain that $\bar{c}_i(t)$ solves
\begin{equation}
\bar{c}_i'(t) + \bar{\lambda}^2_i \bar{c}_i(t) = - \cos \left( \frac{2 \pi t}{T} \right) \int_0^h \bar{u}_i \d x =
\begin{cases}
2 \frac{\bar{A}_i}{\bar{\lambda}_i} \cos \left( \frac{2 \pi t}{T} \right), &i=2k+1, \\
0, &i=2k,
\end{cases}
\end{equation}
for $k \in \N$. Then,
\begin{equation}\label{barci}
\bar{c}_i(t) =
\begin{cases}
\frac{2 \pi T}{4 \pi^2 + \bar{\lambda}_i^4 T^2} \frac{2 \bar{A}_i}{\bar{\lambda}_i} \left[ \sin\left( \frac{2 \pi t}{T} \right) + \frac{\bar{\lambda}_i^2 T}{2 \pi} \cos \left( \frac{2 \pi t}{T} \right) \right], &i=2k+1, \\
0, &i=2k,
\end{cases}
\end{equation}
and we can finally write the formulae similar to those for $w(t,x)$, but incorporating \eqref{barci} and \eqref{barui},
\begin{align}
\bar{w}(t,x) &=  \sum_{i=1}^\infty \bar{c}_i(t) \bar{A}_i \sin(\bar{\lambda}_i x),\nonumber\\
\partial_x \bar{w}(t,h) &=  \sum_{i=1}^\infty \bar{c}_i(t) \bar{\lambda}_i\bar{A}_i \cos(\bar{\lambda}_i h). \label{xbwth}
\end{align}

\begin{figure}[h]
\begin{minipage}{0.6\textwidth}
\centering
\includegraphics[width=\textwidth]{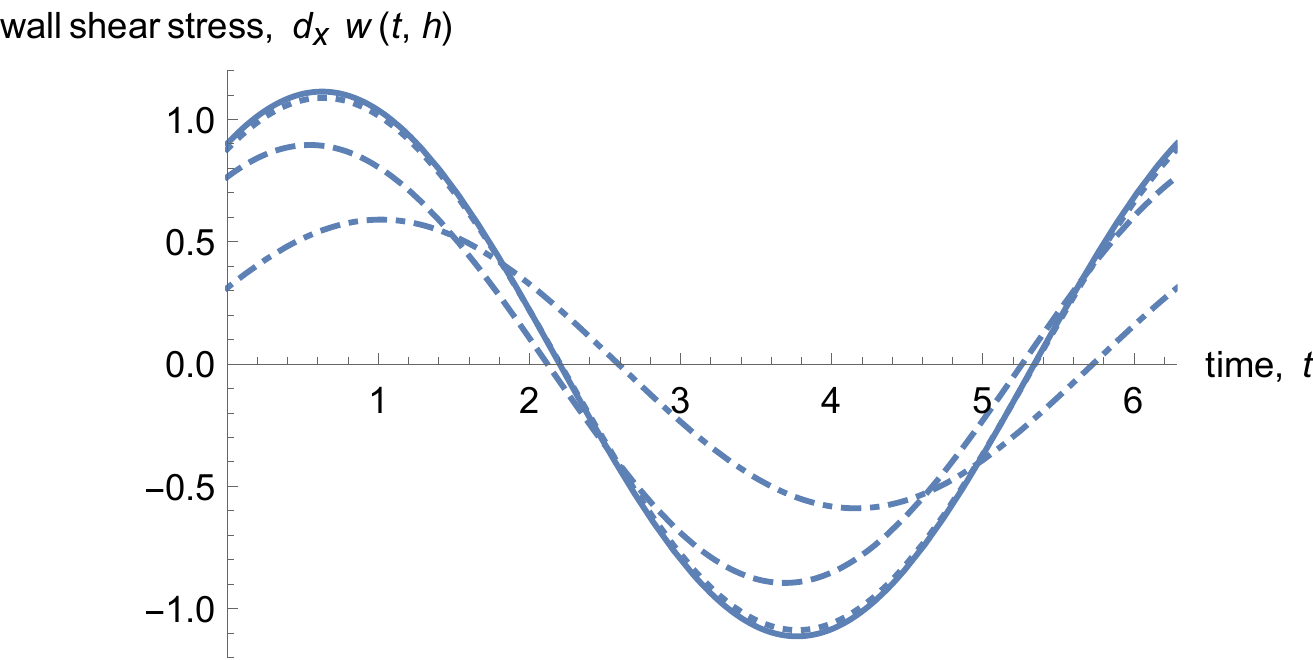}
\end{minipage}\quad
\begin{minipage}{0.35\textwidth}
\captionsetup{width=\linewidth}
\centering
\caption{Comparison of the wall shear stresses $\partial_x w(t,h)$ for fixed value of $\alpha=1$ and three different values of $\beta \in \{0.1, 4.2, 100\}$. For the dot-dashed style $\beta = 0.1$; for the dashed style $\beta = 4.2$ and for the dotted style $\beta = 100$. The full line corresponds to the Dirichlet solution, which is independent of $\alpha$ and $\beta$.}
\label{fig:compareperi}
\end{minipage}
\end{figure}

\begin{figure}[h]
\begin{minipage}{0.6\textwidth}
\centering
\includegraphics[width=\textwidth]{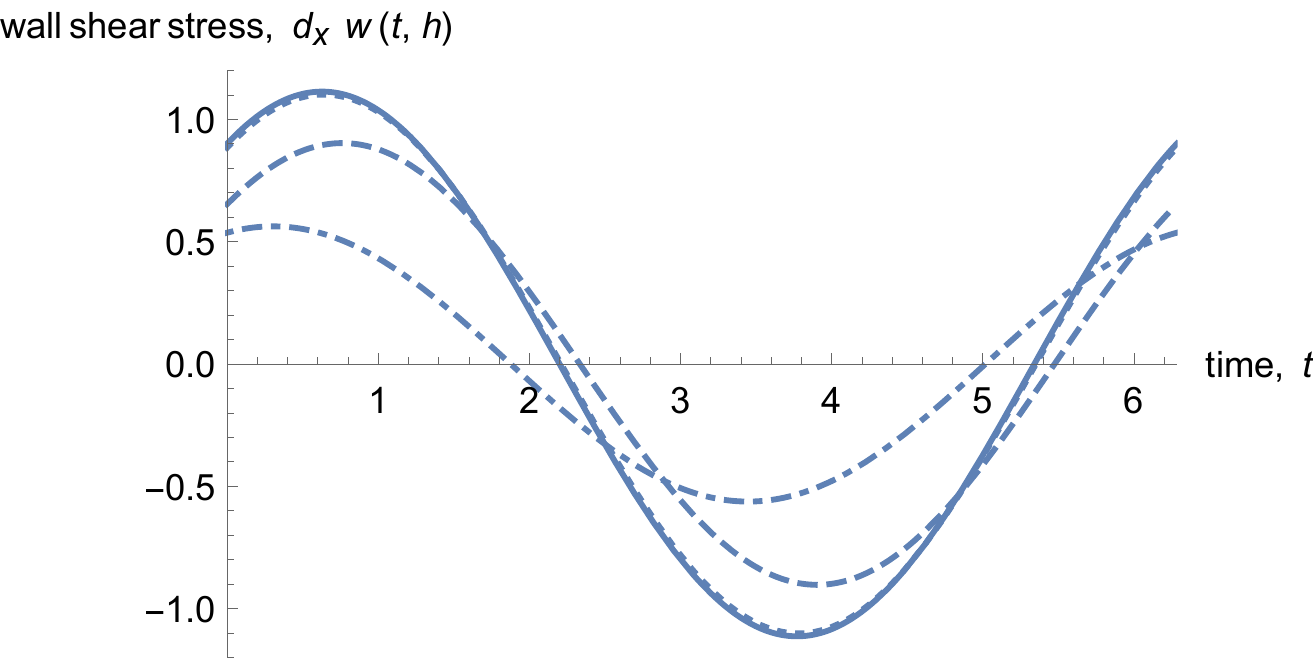}
\end{minipage}\quad
\begin{minipage}{0.35\textwidth}
\captionsetup{width=\linewidth}
\centering
\caption{Comparison of the wall shear stresses $\partial_x w(t,h)$ for fixed value of $\beta=1$ and three different values of $\alpha \in \{0.1, 4.2, 100\}$. For  the dot-dashed style $\alpha = 0.1$; for the dashed style $\alpha = 4.2$ and for the dotted style $\alpha = 100$. The full line corresponds to the Dirichlet solution, which is independent of $\alpha$ and $\beta$.}
\label{fig:compareperi2}
\end{minipage}
\end{figure}

In Figure~\ref{fig:compareperi} and Figure~\ref{fig:compareperi2}, we compare several examples of the periodic wall shear stress \eqref{xwth} of a solution corresponding to the dynamic slip condition with the reference wall shear stress \eqref{xbwth} of a solution that satisfies the Dirichlet condition. The constants are chosen as $h= \pi$ and $T=2\pi$. The eigenvalues of the reference ``Dirichlet" solution are the natural numbers (thanks to the proper choice of the parameter $h$).

In particular, we compare the values of the shear stresses on the boundary (and not the slip velocities as before), for the simple reason that due to the presence of the (complete) Dirichlet boundary condition, the slip velocity is equal to zero, and in this case the wall shear stress represents the behaviour on the boundary better.

In Figure~\ref{fig:compareperi}, we fix the value of $\alpha=1$ and compare the wall shear stresses for values $\beta \in \{0.1, 4.2, 100\}$ with the Dirichlet solution, which is independent of $\alpha$ and $\beta$.
On the other hand, in Figure~\ref{fig:compareperi2}, we did the opposite - we fixed the value of $\beta=1$ and compare the wall shear stress for values $\alpha \in \{0.1, 4.2, 100\}$ with the Dirichlet solution, which is independent of $\alpha$ and $\beta$.

Using these simulations, we can see how the values of $\alpha$ and $\beta$ influence the wall shear stresses of solutions. We basically see two effects - translation in time and  significant difference in the magnitudes of the solutions. In particular, we can observe that the dynamic solution narrows the Dirichlet solution in the case when the value of $\alpha$ in Figure~\ref{fig:compareperi2} and the value of $\beta$ in Figure~\ref{fig:compareperi} are large. To explain this, we notice that these parameters enter the formula~\eqref{xwth} via $A_i$ only, and when comparing~\eqref{condAi} and~\eqref{barui}, $A_i \to \bar{A}_i$ whenever $\alpha+\beta \to \infty$. Finally, both of them, $A_i$ in~\eqref{xwth} and $\bar{A}_i$ in~\eqref{xbwth}, are present in the second power.

\section{Function spaces}\label{spaces}

We work with a special type of boundary condition which includes the time derivative of the velocity of the fluid weighted by the parameter $\beta$. Such a structure demands a definition of specific function spaces, as well. In this part, we introduce the Gelfand triplets that consist of the function spaces which take into account our general boundary condition. 

\subsection*{Gelfand triplet}

For $\o$ a Lipschitz domain in $\R^d$, $\beta\geq 0$ and $r \in(1, \infty)$\footnote{In our result, Theorem~\ref{uNSrthm}, we only allow $r \in(6/5, \infty)$. However, this restriction is arising due to the lack of compactness in the convective term, i.e., it is initiated by the properties of the system. For other problems, e.g. the Stokes-like one, where the convective term is not present, we can use this theory for any $r \in(1, \infty)$.}, we define  $\mathcal{V} \subset \C^{0,1}(\overline{\o}) \times \C^{0,1}(\po)$ as
\begin{equation*}
\mathcal{V} := \{ (\vv, \g) \in \C^{0,1}(\overline{\o}) \! \times \! \C^{0,1}(\po); \diver \vv = 0\text{ in } \o, \vv \cdot \n = 0\text{ and } \vv = \g \text{ on } \po \}.
\end{equation*}
With the help of $\mathcal{V}$, we also define
\begin{align}
V_r &:= \overline{\mathcal{V}}^{\|\cdot\|_{V_r}},  \text{ where } \|(\vv, \g)\|_{V_r}:= \|\vv\|_{W^{1,r}(\o)} + \|\vv\|_{L^2(\o)}+ \|\g\|_{L^2(\po)} \label{V}\\
H &:= \overline{V_r}^{\|\cdot\|_H}, \text{ where } \|(\vv, \g)\|^2_H:= \|\vv\|^2_{L^2(\o)}+ \beta \|\g\|^2_{L^2(\po)}. \label{H}
\end{align}
For $r=2$, due to the Trace theorem, the norm on $V_2$ defined by~\eqref{V} is equivalent to the $W^{1,2}$-norm on $V_2$. Also, using the definitions of the $V_r$- and $H$-norms, we emphasize that for $(\vv, \g) \in V_r$ it holds that $\g = \tr \vv$ on $\po$, however, it need not be the case for $(\vv, \g) \in H$, since the latter one does not keep the Sobolev property of the function and the trace may not exists. Moreover, even if the trace of $\vv$ exists it is not necessarily equal to $\g$.

As $V_r$ is a closed subspace of $(\Wndr(\o)\cap L^2(\o))\times L^2(\po)$, which is a reflexive separable space, it is itself reflexive and separable. Also, $H$ is a Hilbert space identified with its own dual $H\equiv H^*$ with the inner product defined by
\begin{equation}
\left( (\tilde{\vv}, \tilde{\g}), (\vv, \g) \right)_{H}:= \int_{\o} \tilde{\vv} \cdot \vv \d x + \beta \int_{\po} \tilde{\g} \cdot \g \d S.
\end{equation}
By definition, $V_r$ is continuously embedded into $H$ and is also dense in $H$, therefore also the embedding $V_r \hookrightarrow H$ is  dense. Next, restricting every functional $f\in H^*$ to $V_r \subset H$, we get that $H^*\!\equiv\! H$ is embedded in $V_r^*$. This last embedding is also continuous because the adjoint map $i^*:H^*\to V^*$ to the continuous embedding $i:V\to H$ is continuous. Finally, the embedding $H^*\hookrightarrow V_r^*$ is dense because $V_r$ is reflexive and dense in $H$ (cf. \cite[Remark 17, p. 46]{Brezis}).
Thus, we have the Gelfand triplet
\begin{equation}\label{gelfand}
V_r \hookrightarrow  H\equiv H^*  \hookrightarrow V_r^*,
\end{equation}
and both embeddings are continuous and dense. Moreover, for $r>2d/(2+d)$, $W^{1,r}(\o)$ is compactly embedded into $L^2(\o)$ and for $r>2d/(d+1)$, the trace operator is compact from $W^{1,r}(\o) \to L^2(\po)$, according to the corollary of the Trace theorem. Therefore,
\begin{equation*}
V_r \hookrightarrow \hookrightarrow  H ~\text{ whenever }~ r>\frac{2d}{d+1}.
\end{equation*}

We define the duality pairing between $V_r$ and $V_r^*$ in a standard way as a continuous extension of the inner product $(\cdot, \cdot)_H$ on $H$. That is, for any $(\vv, \g) \in V_r \subset H$ and $(\tilde{\vv}, \tilde{\g})\in H^* \subset V_r^*$ we have
\begin{equation*}
\langle (\tilde{\vv}, \tilde{\g}), (\vv, \g) \rangle_{V_r} := \left( (\tilde{\vv}, \tilde{\g}), (\vv, \g) \right)_H.
\end{equation*}
Subsequently, for any $(\vv, \g) \in V_r$ and $(\tilde{\vv}, \tilde{\g})\in V_r^*$ we define
\begin{equation}\label{dualityV}
\langle (\tilde{\vv}, \tilde{\g}), (\vv, \g)  \rangle_{V_r} := \lim_{k\to +\infty} ((\tilde{\vv}^k, \tilde{\g}^k),(\vv, \g))_H,
\end{equation}
where $\{(\tilde{\vv}^k, \tilde{\g}^k)\}_{k\in \mathbb{N}}$ is a sequence in $H^*$ converging to $(\tilde{\vv}, \tilde{\g})$ in $V_r^*$.

Finally, we specify how to generate the duality pairing for object defined only inside of $Q$ or $\Omega$, which is for example the case of the external body forces $\f$. Hence, for $\f \in (W^{1,r}_{\n} (\o))^*$, we can identify it with $(\f,\0)\in V_r^*$, and we can write
\begin{equation}\label{dualityVf}
\langle \f, \vp \rangle_{V_r}:= \langle (\f,\0), (\vp, \tr \vp) \rangle_{V_r}= \lim_{k\to +\infty} \io \f^k \cdot \vp \d x = \langle \f, \vp \rangle_{W^{1,r}_{\n}(\o)},
\end{equation}
for any $(\vp, \tr \vp) \in V_r$, where $\{(\f^k, \0\}_{k\in \mathbb{N}}$ is a sequence in $H$ converging to $(\f, \0)$ in $V^*_r$. Note that in the case when $\f\in L^2(\o)$, this definition just means
\begin{equation}\label{fvL2}
\langle \f, \vp \rangle_{L^2(\o)}=\io \f\cdot \vp \d x,
\end{equation}
which is exactly the formula requiring the consistency of a definition of a weak solution. It is evident that this term does not see any information coming from the boundary $\po$. Although, it would not be the case if we considered the generalisation \eqref{genP5}. In this setting, for given $\g\in L^2(\po)$, we would set
\begin{equation}\label{MB}
\langle (\f,\g), (\vp, \tr \vp) \rangle_{V_r} := \langle \f, \vp \rangle_{W^{1,r}_{\n}(\o)} + \ipo \g\cdot \tr \vp \d S,
\end{equation}
which would again correspond to a proper definition of a weak solution. From another point of view, if we considered a standard couple $(\tilde{\vv}, \tilde{\g}) \in V_r^*$, this is a continuous linear functional which is bounded, i.e., for any $(\vp, \tr \vp) \in V_r$,
\begin{equation*}
\left| (\tilde{\vv}, \tilde{\g}) (\vp, \tr \vp) \right| \le \|(\tilde{\vv}, \tilde{\g})\|_{V^*_r} \left(\|\vp\|_{W^{1,r}(\o)} + \|\vp\|_{L^{2}(\o)} + \|\tr \vp\|_{L^2(\po)} \right).
\end{equation*}
In contrary, for $(\f, \0) \in V^*_r$ we only have that for any $(\vp,\tr \vp) \in V_r$,
\begin{equation*}
|(\f, \0)(\vp, \tr \vp)| \le \|(\f, \0)\|_{V^*_r} \left(\|\vp\|_{W^{1,r}(\o)} + \|\vp\|_{L^{2}(\o)} \right).
\end{equation*}

\noindent
\textit{Notation.}
\begin{itemize}
\item[$\quad$]
\item[1.] For simplicity, the fact that $(\vp, \tr \vp) \in V_r$ or $H$ will be only denoted by $\vp \in V_r$ or $H$, respectively, however, understood in the sense of definition of the corresponding space. Especially, we always write $\f \in V_r^*$ and understand it in the sense $(\f, \0)\in V^*_r$.
\item[2.] For $r=2$, we simplify the notation and denote the Hilbert space $V:= V_2$.
\item[3.] In the space $V$, the $V$-norm defined in \eqref{V}, the $W^{1,2}$-norm, and the norm generated by the scalar product defined in \eqref{scalarproduct} are equivalent. We use them interchangeably.
\end{itemize}

The basis orthogonal in $V$ and orthonormal in $H$ is defined and studied in the Appendix~\ref{basis}.

\section{Maximal monotone graphs}\label{graphs}

In this part, we only shortly introduce the theory for the maximal monotone $r$-graphs. As opposed to other works on the maximal monotone graph setting, here we do not assume the existence of a Borel measurable selection operator, i.e., a mapping $\S^*:\R^{d\times d} \to \R^{d \times d}$ fulfilling $(\S^*(\DD),\DD)\in \A$.

A detailed attention to the formulations and proofs of the lemmata from this section is paid in~\cite[Section~4]{BuMaMa} and similarly also in~\cite[Chapter 4]{EM}. We use the results from these works in order to approximate the graphs, and focus on the dynamic slip effects in the proof presented here.

Due to~\eqref{NSrf} and \eqref{NSrg}, the implicit character of the response of the fluid inside the domain is described via maximal monotone $r$-graph (and on the boundary via maximal monotone $2$-graph), so we start with its definition.

\begin{definition}[Maximal monotone $r$-graph]\label{maxmongrafA}
Let $r \in (1, \infty)$, $r' := r/(r-1)$ and let $\A \subset \R^{d \times d} \times \R^{d \times d}$. We say that $\A$ is a maximal monotone $r$-graph, if
\begin{itemize}
\item[(A1)] $(\0,\0) \in \A$,
\item[(A2)] monotonicity: for any $(\S_1,\DD_1), (\S_2,\DD_2) \in \A$,
\begin{equation*}
(\S_1 - \S_2):(\DD_1 - \DD_2) \ge 0,
\end{equation*}
\item[(A3)] maximality: if for some $(\S,\DD) \in \R^{d \times d} \times \R^{d \times d}$ and all $(\oS,\oD) \in \A$
\begin{equation*}
(\S - \oS):(\DD - \oD) \ge 0
\end{equation*}
holds, then $(\S,\DD) \in \A$,
\item[(A4)] $r$-coercivity: there exist $C_1, C_2 \in \R_+$ such that for all $(\S,\DD) \in \A$ there holds
\begin{equation*}
\S :\DD \ge C_1(|\S|^{r'} +|\DD|^{r})-C_2.
\end{equation*}
\end{itemize}
\end{definition}

In general, the maximal monotone graph theory can cover a rather wide class of models, especially the implicit ones. However, the explicit description allows us to approach the problem via proper Galerkin approximation, in particular, when estimating the time derivative on the Galerkin level, where the properties of the explicit basis\footnote{The key property is the continuity of the projection operator in the space $V$ \eqref{vlV}. We believe that for smooth domains, one could construct a basis for which the projection would be continuous also in $V_r$, however, we omit such procedure here in order to avoid the technical difficulties.} and the continuity of the selection are heavily used. Therefore, our primary goal is to approximate maximal monotone $r$-graph by an explicit maximal monotone $2$-graph.

In what follows, we construct the approximative graphs $\Ae$ and $\Aee$, which bring us to the situation of a $2$-graph with explicit formulation. We postulate a lemma about this approximative property, as well as several other lemmata that describe the properties of the maximal monotone graphs. Finally, we make a note on the graph $\B$ which describes the constitutive relation on the boundary $\Gamma$.

\noindent
\textit{Construction.} Let $\A$ be a maximal monotone $r$-graph and let $\e>0$. We define
\begin{subequations}\label{AAA}
\begin{align}
\Ae &:= \{(\tS,\tD) \in \R^{d \times d} \times \R^{d \times d};\, \exists (\oS,\oD) \in \A, \tS = \oS, \tD = \oD + \e \oS\}, \label{Ae}\\
\Aee &:= \{(\S,\DD) \in \R^{d \times d} \times \R^{d \times d};\, \exists (\tS,\tD) \in \Ae, \S = \tS + \e \tD, \DD = \tD\}. \label{Aee}
\end{align}
\end{subequations}

\begin{lemma}\label{Agraf}
Let $\A$ be a maximal monotone $r$-graph. Then for every $\e \in (0,1)$, $\Aee$ is a maximal monotone $2$-graph. Moreover, there exists a unique $\S_\e^*: \R^{d\times d} \to \R^{d\times d}$, which is Lipschitz continuous and uniformly monotone, and satisfies
\begin{equation}\label{selection}
(\S, \DD) \in \Aee ~\Longleftrightarrow~ \S = \S_\e^*(\DD).
\end{equation}
Also, for an arbitrary measurable and bounded $U \subset Q$, let $\Se, \De : U \to \R^{d\times d}$ be such that $(\Se, \De) \in \Aee$ almost everywhere in $U$, and let
\begin{equation}\label{intbound}
\int_U \Se : \De \d x  \d t \leq C ~\text{ uniformly with respect to } \e.
\end{equation}
Then there exist $\S \in L^{r'}(U)$, $\DD \in L^r(U)$ so that for subsequences
\begin{equation}\label{min2r}
\begin{aligned}
\Se &\tow \S &&\text{ weakly in } L^{\min\{2,r'\}}(U), \\
\De &\tow \DD &&\text{ weakly in } L^{\min\{2,r\}}(U).
\end{aligned}
\end{equation}
Moreover, if
\begin{equation}\label{assomez}
\limsup_{\e \to 0_+} \int_U \Se : \De \d x  \d t \leq \int_U \S : \DD \d x \d t,
\end{equation}
then $(\S, \DD) \in \A$ almost everywhere in $U$ and for a subsequence,
\begin{equation}\label{limitanasobku}
(\Se : \De) \tow (\S:\DD)~ \text{ weakly in }~L^1(U).
\end{equation}
\end{lemma}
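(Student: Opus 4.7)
The proof naturally falls into three steps: (i) construction of the Lipschitz selection $\S_\e^*$ and verification that $\Aee$ is a maximal monotone $2$-graph; (ii) passage from \eqref{intbound} to the weak limits \eqref{min2r}; (iii) a Minty-type identification of the limit and deduction of the weak $L^1$ convergence \eqref{limitanasobku}, under the additional assumption \eqref{assomez}.

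In step (i), the construction of $\Aee$ identifies every $(\S, \DD) \in \Aee$ with a unique $(\oS, \oD) \in \A$ satisfying $\oS = \S - \e\DD$ and $\oD = \DD - \e\oS$. Existence of $\oS$ for every $\DD$ is the resolvent equation $\DD \in (\e\,\id + \A)(\oS)$, which is solvable by Minty's theorem since $\A$ is maximal monotone; uniqueness follows from monotonicity of $\A$, which for two candidates $(\oS_i, \oD_i)$ with the same $\DD$ forces $-\e|\oS_1 - \oS_2|^2 \ge 0$. The direct algebraic identity
\[
(\S_1 - \S_2):(\DD_1 - \DD_2) = (\oS_1 - \oS_2):(\oD_1 - \oD_2) + \e|\oS_1 - \oS_2|^2 + \e|\DD_1 - \DD_2|^2
\]
combined with monotonicity of $\A$ yields uniform $2$-monotonicity of $\Aee$. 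The same monotonicity of $\A$ applied to $(\oS_i, \oD_i) = (\S_i - \e\DD_i, \DD_i - \e\oS_i)$ gives $|\oS_1 - \oS_2| \le \e^{-1}|\DD_1 - \DD_2|$, so $\S_\e^*$ is $(\e^{-1}+\e)$-Lipschitz. Maximality of $\Aee$ is then immediate since it is the graph of a continuous function on all of $\R^{d\times d}$.

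In step (ii), the pointwise identity $\Se:\De = \oS_\e:\oD_\e + \e(|\oS_\e|^2 + |\DD_\e|^2)$, the $r$-coercivity of $\A$ applied to $(\oS_\e, \oD_\e)$, and \eqref{intbound} together yield
\[
\|\oS_\e\|_{L^{r'}(U)}^{r'} + \|\oD_\e\|_{L^r(U)}^r + \e \int_U (|\oS_\e|^2 + |\DD_\e|^2) \d x \d t \le C.
\]
Using $\|\e\oS_\e\|_{L^2(U)} = \sqrt{\e}\,\|\sqrt{\e}\,\oS_\e\|_{L^2(U)} \to 0$ and analogously for $\e\DD_\e$, the identities $\Se = \oS_\e + \e\DD_\e$ and $\De = \oD_\e + \e\oS_\e$ give uniform bounds on $\Se$ in $L^{\min\{2,r'\}}(U)$ and on $\De$ in $L^{\min\{2,r\}}(U)$, with $\oS_\e$ and $\oD_\e$ sharing the same weak limits as $\Se$ and $\De$, respectively. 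Weak compactness then produces \eqref{min2r}.

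Step (iii) is the analytical heart of the proof and is where the main obstacle sits: since the existence of a Borel measurable selection for $\A$ is explicitly not assumed, only constant pairs $(\bar\S, \bar\DD) \in \A$, and at most simple functions taking values in $\A$, are available as test elements in the monotonicity. Integrating the pointwise monotonicity $(\oS_\e - \bar\S):(\oD_\e - \bar\DD) \ge 0$ over $U$ and passing to $\limsup$ on the left using $\oS_\e:\oD_\e \le \Se:\De$ together with \eqref{assomez}, while taking the limit in the three remaining linear terms by weak convergence, gives $\int_U (\S - \bar\S):(\DD - \bar\DD) \d x \d t \ge 0$ for every constant $(\bar\S, \bar\DD) \in \A$. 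The promotion to the pointwise statement is obtained by testing with simple $\A$-valued functions and using Lebesgue differentiation, combined with a countable density argument in $\A$ and the maximality of $\A$ itself; this identifies $(\S, \DD) \in \A$ a.e.\ on $U$. Once this is established, inserting the now legitimate $(\bar\S, \bar\DD) = (\S, \DD)$ in the integrated monotonicity shows that the nonnegative quantity $(\oS_\e - \S):(\oD_\e - \DD)$ has integral tending to $0$, hence converges to $0$ strongly in $L^1(U)$; combining this with \eqref{assomez} forces the nonnegative $\e$-correction $\e(|\oS_\e|^2 + |\DD_\e|^2)$ to vanish in $L^1$, and expanding $\Se:\De = \oS_\e:\oD_\e + \e(|\oS_\e|^2 + |\DD_\e|^2)$ against any $\psi \in L^\infty(U)$ delivers the weak $L^1$ convergence \eqref{limitanasobku}.
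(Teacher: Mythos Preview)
The paper does not prove this lemma; the paragraph preceding Definition~\ref{maxmongrafA} explicitly defers all proofs in Section~\ref{graphs} to \cite[Section~4]{BuMaMa} and \cite[Chapter~4]{EM}. There is therefore no in-paper argument to compare against. Your steps (i) and (ii) are correct and follow the standard Yosida-regularisation line. One small omission in (i): you verify (A1)--(A3) but not the $2$-coercivity (A4) for $\Aee$; this follows immediately from your identity $\S:\DD=\oS:\oD+\e(|\oS|^2+|\DD|^2)\ge \e|\oS|^2+\e|\DD|^2$ together with $|\S|^2\le 2|\oS|^2+2\e^2|\DD|^2$.

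The genuine concern is the localisation in step (iii). The phrase ``Lebesgue differentiation'' suggests you intend to obtain $\int_E(\S-\bar\S_0):(\DD-\bar\DD_0)\ge 0$ on arbitrary small sets $E$ and then differentiate. That route is blocked: assumption \eqref{assomez} controls only $\limsup_{\e\to 0_+}\int_U$, not $\limsup_{\e\to 0_+}\int_E$ for proper subsets $E\subset U$, so you cannot pass to the limit in a localised version of the monotonicity inequality. What does work, and is presumably what your ``countable density argument'' is meant to encode, is a global argument: take a countable dense set $\{(\bar\S_k,\bar\DD_k)\}_{k\in\N}\subset\A$ with $(\bar\S_1,\bar\DD_1)=(\0,\0)$, put $g_k:=(\S-\bar\S_k):(\DD-\bar\DD_k)$ and $m:=\inf_k g_k$. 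Testing your integrated Minty inequality with the simple $\A$-valued function that at each point picks the minimiser among $\{(\bar\S_k,\bar\DD_k):k\le n\}$ gives $\int_U\min_{k\le n}g_k\ge 0$; the sequence decreases, its positive parts are dominated by $(\S:\DD)^+\in L^1(U)$, and one concludes $\int_U m\ge 0$. On the other hand density and maximality of $\A$ force $m\le 0$ pointwise, with equality exactly where $(\S,\DD)\in\A$; hence $m=0$ a.e.\ and the identification follows. The Lebesgue-differentiation step should be dropped and replaced by this argument (or, alternatively, by invoking Lemma~\ref{pidilema} to produce a measurable $\A$-valued competitor $(\tilde\S,\DD)$ on the complement of the bad set). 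With this correction, your final paragraph deducing \eqref{limitanasobku} is correct.
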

The above lemma deals with the convergence of approximative graphs and corresponding approximative quantities. For the sake of completeness, we also formulate the result about the fixed graph and the convergence properties therein.
\begin{lemma}\label{aproxvA}
For every $n \in \N$, let $(\S^n,\DD^n)\in \A$, and let
\begin{equation*}
\int_U \S^n : \DD^n \d x \d t \leq C ~\text{ uniformly with respect to}~ n
\end{equation*}
for $U \subset Q$ measurable and bounded. Then
\begin{align*}
\S^n &\tow \S &&\text{weakly in } L^{r'}(U), \\
\DD^n &\tow \DD &&\text{weakly in } L^{r}(U).
\end{align*}
Moreover, if
\begin{equation*}
\limsup_{n \to \infty} \int_U \S^n : \DD^n \d x \d t \leq \int_U \S : \DD \d x \d t,
\end{equation*}
then $(\S,\DD) \in \A$ almost everywhere in $U$ and $\S^n:\DD^n \tow \S:\DD$ weakly in $L^1(U)$.
\end{lemma}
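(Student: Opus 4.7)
The plan is a Minty-type argument built from step functions valued in a countable dense subset of $\A$; this avoids any appeal to a Borel measurable selection theorem. First, the $r$-coercivity (A4) applied pointwise to $(\S^n, \DD^n) \in \A$ gives $C_1(|\S^n|^{r'} + |\DD^n|^r) \le \S^n:\DD^n + C_2$; integrating over $U$ and using the hypothesis $\int_U \S^n:\DD^n \le C$ yields uniform bounds on $\|\S^n\|_{L^{r'}(U)}$ and $\|\DD^n\|_{L^r(U)}$, so by reflexivity $\S^n \tow \S$ in $L^{r'}(U)$ and $\DD^n \tow \DD$ in $L^r(U)$ along subsequences.

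For the Minty step, fix a countable dense subset $\{(\S_k, \DD_k)\}_{k\in\N}$ of $\A$ with $(\S_1, \DD_1) = (\0, \0)$ (allowed by (A1)). For any step function $(\oS, \oD)$ taking finitely many values in this set, monotonicity (A2) gives $(\S^n - \oS):(\DD^n - \oD) \ge 0$ pointwise, which integrates to
\[
\int_U \S^n:\DD^n \ge \int_U \S^n:\oD + \int_U \oS:\DD^n - \int_U \oS:\oD.
\]
Boundedness of $(\oS, \oD)$ together with the weak convergences of $\S^n, \DD^n$ lets one pass to $\liminf_n$ on the right, yielding $\liminf_n \int_U \S^n:\DD^n \ge \int_U g(x,t; \oS, \oD)$, where $g(x,t; \S_0, \DD_0) := \S(x,t):\DD_0 + \S_0:\DD(x,t) - \S_0:\DD_0$. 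Set
\[
\Phi(x,t) := \sup_{(\S_0, \DD_0) \in \A} g(x,t; \S_0, \DD_0) = \sup_{k \in \N} g(x,t; \S_k, \DD_k),
\]
the last equality by continuity of $g$ in $(\S_0, \DD_0)$ and density. The finite maxima $\Phi_n(x,t) := \max_{k \le n} g(x,t; \S_k, \DD_k)$ are themselves realized by step functions (at each point, pick an index attaining the max), satisfy $0 \le \Phi_n \uparrow \Phi$, and $\Phi \in L^1(U)$ by (A4) and Young's inequality. Applying the above inequality at each $n$ and passing $n \to \infty$ by monotone convergence gives $\liminf_n \int_U \S^n:\DD^n \ge \int_U \Phi$.

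By direct algebra $\Phi(x,t) - \S(x,t):\DD(x,t) = -\inf_{(\S_0, \DD_0) \in \A}(\S(x,t) - \S_0):(\DD(x,t) - \DD_0)$, and by (A2)--(A3) this quantity is $\ge 0$ pointwise, with equality iff $(\S(x,t), \DD(x,t)) \in \A$. Hence
\[
\int_U \S:\DD \ge \limsup_n \int_U \S^n:\DD^n \ge \liminf_n \int_U \S^n:\DD^n \ge \int_U \Phi \ge \int_U \S:\DD,
\]
so all inequalities are equalities; in particular the nonnegative integrand $\Phi - \S:\DD$ has zero integral, forcing $(\S, \DD) \in \A$ almost everywhere in $U$, and $\lim_n \int_U \S^n:\DD^n = \int_U \S:\DD$. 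Pointwise monotonicity between $(\S^n, \DD^n)$ and $(\S, \DD)$ (both in $\A$ a.e.) then gives $(\S^n - \S):(\DD^n - \DD) \ge 0$; the algebraic identity
\[
\int_U (\S^n - \S):(\DD^n - \DD) = \int_U \S^n:\DD^n - \int_U \S^n:\DD - \int_U \S:\DD^n + \int_U \S:\DD
\]
together with the product limit and the weak convergences forces this integral to $0$, so $(\S^n - \S):(\DD^n - \DD) \to 0$ strongly in $L^1(U)$; applying the same identity in the weak $L^1$ sense yields $\S^n:\DD^n \tow \S:\DD$ in $L^1(U)$. The main obstacle is the sup-interchange step, resolved by the countable-dense-plus-monotone-convergence device so that each truncation $\Phi_n$ corresponds to an actual step function in $\A$ and the inequality from the Minty step applies at each $n$.
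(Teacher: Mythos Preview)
Your proof is correct. The paper itself does not prove this lemma but defers to the references [BuMaMa] and [EM]; the surrounding text (in particular the sentence introducing Lemma~\ref{pidilema}) suggests that the intended argument passes through that lemma, which furnishes for the weak limit $\DD$ a measurable $\tilde\S$ with $(\tilde\S,\DD)\in\A$ a.e., after which the standard Minty trick (exactly as carried out later in the paper's graph-identification step, culminating in~\eqref{minty}) applies directly with the single comparison pair $(\tilde\S,\DD)$.

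Your route is genuinely different: by taking the pointwise supremum of $g(\cdot;\S_0,\DD_0)$ over a countable dense subset of $\A$, you are in effect computing the Fitzpatrick function of the maximal monotone graph and exploiting its variational characterisation of membership in $\A$ (the identity $\Phi-\S:\DD=-\inf_{(\S_0,\DD_0)\in\A}(\S-\S_0):(\DD-\DD_0)$ together with (A2)--(A3)). This is elegant because it sidesteps any selection argument entirely---neither a Borel selection nor the approximation device behind Lemma~\ref{pidilema} is needed---and delivers the full chain of inequalities in one stroke. The price is a small amount of auxiliary machinery (the step-function realisation of the finite maxima $\Phi_n$, and the $L^1$-bound on $\Phi$ via (A4) and Young's inequality). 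The paper's route is more pedestrian but reuses Lemma~\ref{pidilema}, which it needs elsewhere anyway (for the Lipschitz-truncation argument in the proof of Theorem~\ref{uNSrthm}). One cosmetic point: you use the symbol $n$ both for the sequence index in $(\S^n,\DD^n)$ and for the truncation level in $\Phi_n$; renaming the latter would remove the ambiguity in the sentence ``Applying the above inequality at each $n$''.
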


The next lemma is in fact a replacement of the assumption about the existence of a measurable selection. Indeed, assuming that the Borel measurable selection exists, the claim of the following lemma is straightforward. In our case, we need to show that for any measurable $\DD$ there exists the corresponding measurable $\S$ such that $(\S,\DD)\in \A$.

\begin{lemma}\label{pidilema}
Let $r\in(1,\infty)$. For every $\DD \in L^r(Q)$ there exists $\S \in L^{r'}(Q)$ such that $(\S, \DD) \in \A$ almost everywhere in $Q$.
\end{lemma}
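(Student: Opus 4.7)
The only obstacle is the measurability of $\S$: pointwise, the non-emptiness of $\{\S : (\S,\DD_0) \in \A\}$ for each $\DD_0$ follows easily from Lemma~\ref{Agraf} and maximality, but without a Borel selection for $\A$ we cannot turn this into a measurable $\S(t,\x)$. We exploit that Lemma~\ref{Agraf} does provide, for every $\e > 0$, a continuous (hence Borel) selection $\S_\e^*$ of $\Aee$, and construct $\S$ as a double limit: first $\e \to 0_+$ at a truncation level of $\DD$, then sending the truncation to infinity.

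For $N \in \N$ set $\DD_N := \DD\,\chi_{\{|\DD|\le N\}} \in L^\infty(Q) \cap L^r(Q)$, so $\DD_N \to \DD$ strongly in $L^r(Q)$ by dominated convergence. Define $\S_{\e,N}(t,\x) := \S_\e^*(\DD_N(t,\x))$; by continuity of $\S_\e^*$ this is measurable and $(\S_{\e,N},\DD_N) \in \Aee$ a.e. By the construction~\eqref{AAA}, $\oS := \S_{\e,N} - \e \DD_N$ and $\oD := \DD_N - \e \oS$ satisfy $(\oS,\oD) \in \A$ a.e., and the $r$-coercivity (A4) combined with Young's inequality yields the pointwise bound
\begin{equation*}
|\oS|^{r'} \le C(1 + |\oD|^r) \le C(1 + |\DD_N|^r + \e^r |\oS|^r).
\end{equation*}
For $r \le 2$ this closes at once (absorb $\e^r|\oS|^r$ via $|\oS|^r \le 1 + |\oS|^{r'}$); for $r > 2$ one uses continuity of $\S_\e^*$ together with $\S_\e^*(\0)=\0$ and a connectedness argument along $\tau \mapsto \S_\e^*(\tau \DD_N(t,\x))$, $\tau \in [0,1]$, to conclude that $|\oS|$ stays on the low branch of the admissible set. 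Either way, $|\S_{\e,N}(t,\x)| \le C(N)$ uniformly for $\e$ sufficiently small.

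Along a subsequence, $\S_{\e,N} \tow \S_N$ weakly in $L^2(Q)$; since $\DD_N$ is independent of $\e$, $\int_Q \S_{\e,N}:\DD_N \to \int_Q \S_N:\DD_N$, so~\eqref{assomez} holds with equality and Lemma~\ref{Agraf} gives $(\S_N,\DD_N) \in \A$ a.e.\ with $\S_N \in L^\infty(Q)$. Applying (A4) to $(\S_N,\DD_N)$ and integrating yields
\begin{equation*}
C_1 \int_Q |\S_N|^{r'} \d \x \d t \le \int_Q \S_N:\DD_N \d \x \d t + C_2|Q| \le \|\S_N\|_{L^{r'}(Q)} \|\DD\|_{L^r(Q)} + C,
\end{equation*}
so Young's inequality renders $\{\S_N\}$ bounded in $L^{r'}(Q)$. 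Extracting a subsequence, $\S_N \tow \S$ weakly in $L^{r'}(Q)$; combined with $\DD_N \to \DD$ strongly in $L^r(Q)$ this gives $\int_Q \S_N:\DD_N \to \int_Q \S:\DD$, and Lemma~\ref{aproxvA} produces $(\S,\DD) \in \A$ a.e.\ with $\S \in L^{r'}(Q)$. The technical heart of the argument is the uniform-in-$\e$ pointwise bound in the case $r>2$, where the implicit inequality for $|\oS|$ admits two branches of admissible values and continuity of the explicit selection $\S_\e^*$ (not merely its existence) is essential to remain on the low one.
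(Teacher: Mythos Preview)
The paper does not prove Lemma~\ref{pidilema} itself; it defers to the references~\cite{BuMaMa} and~\cite{EM}. Your two--step scheme (first $\e\to 0_+$ at a fixed truncation level $\DD_N$, then $N\to\infty$) is correct and is essentially the natural route. The second step is clean: once $(\S_N,\DD_N)\in\A$ a.e., the $r$-coercivity (A4) and Young's inequality give the uniform $L^{r'}$-bound on $\S_N$, and strong convergence $\DD_N\to\DD$ in $L^r(Q)$ together with Lemma~\ref{aproxvA} closes the argument.

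The only place where you work harder than necessary is the pointwise bound $|\S_{\e,N}|\le C(N)$ in the case $r>2$. The connectedness argument along $\tau\mapsto\S_\e^*(\tau\DD_N)$ is correct (the admissible set $\{s\ge 0:\; s^{r'}\le C(1+N^r)+C\e^r s^r\}$ really does split into two components for $\e$ small, the lower one being bounded by roughly $(C(1+N^r))^{1/r'}$), but it is sketched so briefly that a reader will have to reconstruct the analysis of the two branches. You can sidestep the whole issue by invoking Lemma~\ref{lemaodhmin} instead of unwinding the construction~\eqref{AAA}: for $(\S_{\e,N},\DD_N)\in\Aee$ it gives, with constants independent of $\e$,
\[
\tilde C_1\,|\S_{\e,N}|^{\min\{r',2\}}-\tilde C_2\;\le\;\S_{\e,N}\!:\!\DD_N\;\le\;|\S_{\e,N}|\,|\DD_N|\;\le\;\tfrac{\tilde C_1}{2}|\S_{\e,N}|^{\min\{r',2\}}+C|\DD_N|^{\max\{r,2\}},
\]
where the last step is Young's inequality with conjugate exponents $\min\{r',2\}$ and $\max\{r,2\}$. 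This immediately yields $|\S_{\e,N}|\le C(N)$ pointwise and uniformly in $\e$, with no case distinction and no branch argument. From there your proof proceeds unchanged: the uniform $L^\infty$-bound gives weak $L^2$-compactness, the product $\S_{\e,N}\!:\!\DD_N$ passes to the limit because $\DD_N$ is fixed, and Lemma~\ref{Agraf} delivers $(\S_N,\DD_N)\in\A$ a.e.
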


\begin{lemma}\label{lemaodhmin}
Let $(\Se,\De)\in \Aee$, then there exist $\tilde{C_1}, \tilde{C_2} >0$ such that
\begin{equation}\label{odhadAr2}
\Se : \De \geq \tilde{C_1}(|\Se|^{\min\{r', 2\}}) + |\De|^{\min\{r, 2\}}) - \tilde{C_2}.
\end{equation}
\end{lemma}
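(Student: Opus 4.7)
The plan is to express the elements of $\Aee$ explicitly in terms of the underlying graph $\A$ via the construction \eqref{AAA}, expand $\Se : \De$, and then invoke the $r$-coercivity~(A4) of $\A$ together with an elementary power-comparison inequality. Throughout, $\e \in (0,1)$ is fixed and the constants below may depend on $\e$.

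First I would unwind the definitions of $\Ae$ and $\Aee$: $(\Se, \De) \in \Aee$ holds if and only if there exists $(\oS, \oD) \in \A$ such that $\Se = (1+\e^2)\oS + \e\oD$ and $\De = \oD + \e\oS$. A direct computation then produces the identity
$$\Se : \De = (1+2\e^2)\, \oS : \oD + \e(1+\e^2)\, |\oS|^2 + \e\, |\oD|^2,$$
into which I substitute the $r$-coercivity estimate $\oS : \oD \geq C_1(|\oS|^{r'} + |\oD|^{r}) - C_2$ from~(A4) to obtain
$$\Se : \De \;\geq\; C_1\bigl(|\oS|^{r'} + |\oD|^{r}\bigr) + \e\bigl(|\oS|^{2} + |\oD|^{2}\bigr) - 3 C_2.$$

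Next I would bound $|\Se|$ and $|\De|$ linearly in $|\oS|, |\oD|$ by the triangle inequality, and combine this with the elementary inequality $s^{\min\{p,q\}} \leq s^{p} + s^{q} + 1$ for $s\geq 0$ and $p,q>0$ (which follows by splitting into the cases $s \leq 1$ and $s \geq 1$). Together with the subadditivity of $t \mapsto t^{\theta}$ for $\theta \in (0,1]$, this produces a constant $C(\e) > 0$ such that
$$|\Se|^{\min\{r',2\}} + |\De|^{\min\{r,2\}} \;\leq\; C(\e)\bigl(|\oS|^{r'} + |\oS|^{2} + |\oD|^{r} + |\oD|^{2} + 1\bigr).$$
Combining the two displays and absorbing constants then gives $\Se : \De \geq \tilde C_1\bigl(|\Se|^{\min\{r',2\}} + |\De|^{\min\{r,2\}}\bigr) - \tilde C_2$ with the choice $\tilde C_1 := \min\{C_1,\e\}/C(\e)$ and a suitable $\tilde C_2 > 0$.

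The only mild subtlety lies in the case analysis for the power-comparison inequality, because the relative ordering of $r'$, $r$ and $2$ depends on whether $r \leq 2$ or $r \geq 2$. The regularising term $\e(|\oS|^{2} + |\oD|^{2})$ arising from passing from $\A$ to $\Aee$ is precisely what compensates for the quadratic contributions $|\Se|^{2}$, $|\De|^{2}$ that appear in the upper estimate when $r' > 2$ or $r < 2$, so the argument goes through uniformly across the whole range $r \in (1,\infty)$.
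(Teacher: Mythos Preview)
Your argument is correct. You correctly unwind the two-step regularisation \eqref{AAA}, the expansion of $\Se:\De$ is accurate, and the passage from the mixed powers $|\oS|^{r'},|\oS|^2,|\oD|^r,|\oD|^2$ back to $|\Se|^{\min\{r',2\}}+|\De|^{\min\{r,2\}}$ is sound; the key point you identify---that the extra quadratic terms $\e|\oS|^2+\e|\oD|^2$ produced by the regularisation are exactly what is needed to close the estimate across the full range of $r$---is the heart of the matter. One cosmetic remark: the exponent $\min\{r',2\}$ lies in $(1,2]$, so the phrase ``subadditivity of $t\mapsto t^\theta$ for $\theta\in(0,1]$'' is not literally the right tool for splitting $(a+b)^{\min\{r',2\}}$; the convexity bound $(a+b)^p\le 2^{p-1}(a^p+b^p)$ for $p\ge 1$ (or squaring first and then applying subadditivity with $\theta=\min\{r',2\}/2\le 1$) does the job just as easily.

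As for comparison with the paper: the paper does not give an in-text proof of this lemma but delegates it to \cite[Section~4]{BuMaMa} and \cite[Chapter~4]{EM}. Your argument is the natural one and is in the same spirit as those references.
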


For clarity, we mention also the graph $\B$ acting on the boundary, and we formulate the convergence lemma for it. Also, we only formulate that the statements of other lemmata hold equivalently for $\B$, as well.

The maximal monotone $r$-graph $\B \subset \R^d \times \R^d$ and its approximations $\Be$ and $\Bee$ are defined to possess exactly the same qualities like their counterparts $\A$ from the Definition \ref{maxmongrafA}, $\Ae$ and $\Aee$ from \eqref{AAA}, although, being the subsets of $\R^d \times \R^d$. They are related via the approximating property described in the Lemma \ref{Bgraf}.

\begin{lemma}\label{Bgraf}
Let $\B \subset \R^d \times \R^d$ be a maximal monotone $r$-graph. Then for every $\e \in (0,1)$, $\Bee$ is a maximal monotone $2$-graph. Moreover, there exists a unique $\sg_\e^*: \R^d \to \R^d$, which is Lipschitz continuous and uniformly monotone, and satisfies
\begin{equation*}
(\sg, \vv) \in \Bee ~\Longleftrightarrow~ \sg = \sg_\e^*(\vv).
\end{equation*}
Also, for an arbitrary measurable and bounded $U \subset \Gamma$, let $\se, \ve : U \to \R^d$ be such that $(\se, \ve) \in \Bee$ almost everywhere in $U$ and let
\begin{equation}\label{intboundB}
\int_U \se \cdot \ve \d S \d t \leq C ~\text{ uniformly with respect to } \e.
\end{equation}
Then there exist $\sg \in L^{r'}(U)$, $\vv \in L^r(U)$ so that for subsequences
\begin{equation*}
\begin{aligned}
\se &\tow \sg &&\text{ weakly in } L^{\min\{2,r'\}}(U), \\
\ve &\tow \vv &&\text{ weakly in } L^{\min\{2,r\}}(U).
\end{aligned}
\end{equation*}
Moreover, if
\begin{equation}\label{assomezB}
\limsup_{\e \to 0_+} \int_U \se \cdot \ve \d S \d t \leq \int_U \sg \cdot \vv \d S \d t,
\end{equation}
then $(\sg, \vv) \in \B$ almost everywhere in $U$ and for a subsequence,
\begin{equation*}
(\se \cdot \ve) \tow (\sg\cdot\vv)~ \text{ weakly in }~L^1(U).
\end{equation*}
\end{lemma}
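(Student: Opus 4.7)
The plan is to mirror the proof of Lemma~\ref{Agraf} line by line: the only differences are that $\B$ lives on the boundary with vectors in $\R^d$ rather than tensors in $\R^{d\times d}$, and the integration is against $\d S\d t$ instead of $\d x\d t$. Since the monotonicity, maximality, and $r$-coercivity axioms for $\B$ are structurally identical to those for $\A$, the abstract arguments from \cite{BuMaMa} apply verbatim after replacing the scalar product ``$:$'' with ``$\cdot$''. Throughout the proof I will work pointwise (almost everywhere in $U\subset\Gamma$) and only at the end integrate in the boundary measure.

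First I would show that $\Bee$ is a maximal monotone $2$-graph and extract the Lipschitz selection $\sg_\e^*$. The definitions \eqref{Ae}--\eqref{Aee} translated to $\B$ read: $\Be$ replaces $\vv$ by $\vv+\e\sg$, and $\Bee$ replaces $\sg$ by $\sg+\e\vv$. A direct computation using axiom (A2) for $\B$ gives $(\sg_1-\sg_2)\cdot(\vv_1-\vv_2)\ge\e(|\sg_1-\sg_2|^2+|\vv_1-\vv_2|^2)$ on $\Bee$, i.e.\ $2$-graph monotonicity with a uniform modulus $\e$. Maximality of $\Bee$ follows from maximality of $\B$ via Minty's trick, and the quantitative monotonicity gives that $\Bee$ is actually the graph of a single-valued, Lipschitz, uniformly monotone map $\sg_\e^*:\R^d\to\R^d$. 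The boundary analogue of Lemma~\ref{lemaodhmin} holds with the same constants, so $\se\cdot\ve\ge\tilde C_1(|\se|^{\min\{r',2\}}+|\ve|^{\min\{r,2\}})-\tilde C_2$.

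Combining this coercivity bound with the uniform integral bound \eqref{intboundB} produces uniform $L^{\min\{2,r'\}}(U)$ and $L^{\min\{2,r\}}(U)$ bounds on $\se$ and $\ve$, giving, up to subsequences, the weak convergences claimed. To identify the limit under \eqref{assomezB}, I use Minty's trick on the boundary: for arbitrary $(\bar\sg,\bar\vv)\in\B$ the pair $(\bar\sg+\e\bar\vv,\bar\vv+\e\bar\sg)\in\Bee$, so by monotonicity of $\Bee$,
\begin{equation*}
\int_U\bigl(\se-\bar\sg-\e\bar\vv\bigr)\cdot\bigl(\ve-\bar\vv-\e\bar\sg\bigr)\d S\d t\ge 0.
\end{equation*}
Expanding, passing $\e\to 0_+$, using the weak convergences, the assumption \eqref{assomezB}, and the strong convergence of the $\e$-perturbations to zero in the spaces dual to those containing $\se,\ve$, I obtain $\int_U(\sg-\bar\sg)\cdot(\vv-\bar\vv)\d S\d t\ge 0$ for every $(\bar\sg,\bar\vv)\in\B$. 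Choosing $(\bar\sg,\bar\vv)$ as measurable selections from $\B$ localized on arbitrary measurable subsets of $U$ (which exist by the boundary analogue of Lemma~\ref{pidilema}) upgrades this to the pointwise inequality, and maximality of $\B$ then yields $(\sg,\vv)\in\B$ a.e.\ in $U$.

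Finally, for the weak $L^1$ convergence of the products, I would test the same Minty inequality against nonnegative bounded measurable functions on $U$ and use \eqref{assomezB} together with the already established $(\sg,\vv)\in\B$ to force $\int_V\se\cdot\ve\d S\d t\to\int_V\sg\cdot\vv\d S\d t$ on every measurable $V\subset U$; together with weak $L^{\min\{2,r'\}\wedge\min\{2,r\}/2}$ bounds on the products (from \eqref{odhadAr2} adapted) and the Dunford--Pettis criterion, this gives the weak $L^1(U)$ convergence. The main (and really only) obstacle is the bookkeeping in the Minty passage when $r<2$, since then $\se$ only has $L^{r'}$ control while its $\e$-perturbation $\e\bar\vv$ only sits in $L^r\subset L^{r'}$ locally on bounded $U$; the boundedness of $U$ makes the cross terms harmless, so everything goes through exactly as in the proof of Lemma~\ref{Agraf}.
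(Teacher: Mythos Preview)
Your proposal is correct and matches the paper's approach exactly: the paper does not give an independent proof of Lemma~\ref{Bgraf} but simply states that ``the statements of other lemmata hold equivalently for $\B$, as well'' and refers to \cite[Section~4]{BuMaMa} and \cite[Chapter~4]{EM} for the details, which is precisely your strategy of transporting the proof of Lemma~\ref{Agraf} from tensors to vectors and from $\d x\d t$ to $\d S\d t$. One small correction: your displayed uniform monotonicity bound for $\Bee$ should read $\e|\bar\sg_1-\bar\sg_2|^2+\e|\vv_1-\vv_2|^2$ rather than $\e|\sg_1-\sg_2|^2+\e|\vv_1-\vv_2|^2$ (the $\sg$-variable is shifted by $\e\vv$), but this still yields the Lipschitz, uniformly monotone selection $\sg_\e^*$ and does not affect the rest of the argument.
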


\section{Navier--Stokes-like flow}\label{NSr}

We finally apply the theory which was built and motivated up to now. We consider $\A$ to be a general $r$-graph with $r \in (6/5,\infty)$ and $\B$ to be a $2$-graph. This restriction for $r$ comes from the system and the condition on $\B$ is posited for practical reasons - the proof would work for any $\A$, $\B$ which are $r$- and $q$-graphs with appropriately redefined spaces $V_r$ to $V_r^q$. Also, we prove the result for $\beta>0$, as the case $\beta=0$ is already treated in the work~\cite{BGMS2}.

We recall the problem~\eqref{problem} and properly formulate the main result of this work. We start with the definition of a weak solution.

\begin{definition}\label{uNSrdef}
Let $T>0$, $\alpha, \beta> 0$, $r \in (6/5,\infty)$, $\o \subset \R^3$ be a Lipschitz domain, $\f\in L^{r'}(0,T; (V_r)^*)$, and $\vv_0 \in H$. Let $\A$ be a maximal monotone $r$-graph in $Q$ and  $\B$ be a maximal monotone $2$-graph on the boundary $\Gamma$. Set $z:= \max\{r, 5r/(5r-6)\}$. We say that the triplet $(\vv, \S, \sg)$ is a weak solution to the Navier--Stokes-like problem \eqref{problem} if
\begin{align*}
\vv &\in  L^r(0, T; V_r) \cap  \C_w([0, T]; H), \\
\pt \vv &\in L^{z'}(0, T; (V_z)^*),\\
\S &\in L^{r'}(Q),\\
\sg &\in L^\infty(0,T; L^2(\po)),
\end{align*}
the balance of linear momentum is satisfied in the weak sense, i.e., for almost all $t\in (0,T)$ and for all $\vp\in V_z$,
\begin{subequations}
\begin{equation}\label{WFNSr}
\langle \pt \vv,  \vp \rangle_{V_z} -\io (\vv \otimes \vv) : \nabla \vp \d x + \io \S : \DD\vp \d x + \alpha \ipo \sg \cdot \vp \d S  = \langle \f,  \vp \rangle_{V_z},
\end{equation}
and $(\S, \D) \in \A$ almost everywhere in~$Q$ and $(\sg, \vv) \in \B$ almost everywhere on~$\Gamma$. The initial condition is attained in the strong sense,
\begin{equation*}
\lim_{t\to 0_+} \|\vv(t) - \vv_0\|_H = 0.
\end{equation*}
Moreover, we say that a solution satisfies the energy inequality if for all $t \in (0,T)$,
\begin{equation}\label{enineqr}
\frac{1}{2}\|\vv(t)\|^2_H + \int_0^t \io \S:\D \d x \d \tau + \alpha \int_0^t \ipo \sg \cdot \vv \d S \d \tau \leq \it \langle \f,\vv \rangle_V \d \tau + \frac{1}{2}\|\vv_0\|^2_H.
\end{equation}
\end{subequations}
\end{definition}
First, we show that the above definition is compatible with the concept of a classical solution. Indeed, let us assume for a moment that the weak solution has an additional regularity
$$
\begin{aligned}
\partial_t \vv&\in L^2(0,T; H),\\
\diver \S &\in L^2(Q).
\end{aligned}
$$
and that also $\f \in L^2(Q)$. Then we can use integration by parts in~\eqref{WFNSr} and also the definition of a duality pairing in $V$ to obtain that for almost all $t\in (0,T)$ there holds
\begin{equation}\label{a-clas}
\io (\pt \vv + \diver (\vv \otimes \vv) -\diver \S)\cdot  \vp \d x +   \ipo (\pt \vv +\alpha \sg  + \S\n)\cdot \vp \d S  = \io \f \cdot  \vp \d x.
\end{equation}
In particular, \eqref{a-clas} holds for any smooth compactly supported $\vp$ having zero divergence and therefore we can use the de~Rham theorem to find a pressure $p$ such that \eqref{NSra} holds almost everywhere in $Q$. Furthermore, since the tangential part of $\vp$ can be arbitrary, it also follows from \eqref{a-clas} that \eqref{NSrd} is satisfied almost everywhere on $\Gamma$. Hence, the required compatibility condition holds true. It is worth noticing here, that in case we would consider the Stokes-like problem, i.e., the problem without the convective term, the required regularity $\partial_t \vv \in L^2(0,T; H)$ can be proven easily provided that the initial data $\vv \in V$ and the graph $\mathcal{A}$ represents a sub-differential of some convex potential.

\begin{theorem}\label{uNSrthm}
Let all assumptions of the Definition \ref{uNSrdef} be met. Then for any $T>0$, $\alpha, \beta>0$ and $r \in (6/5,\infty)$, there exists a weak solution to the system \eqref{problem}, which satisfies the energy inequality. Moreover, if $r \geq 11/5$, then the energy inequality holds with the equality sign.
\end{theorem}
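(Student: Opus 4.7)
The plan is a two-level approximation scheme combined with the graph-convergence lemmata of Section~\ref{graphs}. First, I regularise the graphs by passing from $\A,\B$ to $\Aee,\Bee$ via~\eqref{AAA}: by Lemma~\ref{Agraf} and Lemma~\ref{Bgraf} these are maximal monotone $2$-graphs realised by Lipschitz continuous, uniformly monotone selections $\S^*_\e$ and $\sg^*_\e$. For every fixed $\e\in(0,1)$, I solve the regularised system $\S=\S^*_\e(\DD\vv)$, $\sg=\sg^*_\e(\vv)$ by a Galerkin scheme built on the basis $\{\w_k\}_{k\in\N}$ of Appendix~\ref{basis}, which is orthogonal in $V$ and orthonormal in $H$. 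The Lipschitz regularity of the selections reduces the Galerkin identity to a Carath\'eodory ODE system for the coefficients $c_k^n(t)$ of $\vv^n:=\sum_{k\le n}c_k^n\w_k$; testing with $\vv^n$ and using Lemma~\ref{lemaodhmin} provides $n$-uniform bounds, at this $\e$-level, on $\|\vv^n\|_{L^\infty(0,T;H)}$, on $\DD\vv^n$ and $\S^n$ in $L^2(Q)$, and on $\vv^n$ and $\sg^n$ in $L^2(\Gamma)$.

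Next, at fixed $\e$, I pass $n\to\infty$. Orthogonality of the basis in $V$ makes the $H$-projection onto $\lin\{\w_1,\dots,\w_n\}$ coincide with a $V$-orthogonal projection, which yields an estimate of $\pt\vv^n$ in the dual of $V$, uniform in $n$. Aubin-Lions applied inside the Gelfand triple~\eqref{gelfand} gives $\vv^n\to\ve$ strongly in $L^2(0,T;H)$; because the $H$-norm contains the $L^2(\po)$-trace term (weighted by $\beta>0$), this convergence automatically captures the boundary traces and handles both the convective nonlinearity and the boundary pairing. Lipschitz continuity of $\S^*_\e,\sg^*_\e$ identifies the constitutive relations in the limit, producing a solution $(\ve,\Se,\se)$ of the $\e$-problem that satisfies the full energy identity (testing with $\ve$ is legal here since $\ve\in L^2(0,T;V)$).

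The central step is the limit $\e\to 0_+$. From Definition~\ref{maxmongrafA}~(A4) transferred through~\eqref{AAA}, the $\e$-uniform estimates read $\Se\in L^{r'}(Q)$, $\De\in L^r(Q)$, $\se,\ve\in L^2(\Gamma)$, $\ve\in L^\infty(0,T;H)\cap L^r(0,T;V_r)$, and $\pt\ve\in L^{z'}(0,T;(V_z)^*)$ with $z=\max\{r,5r/(5r-6)\}$, the second exponent being dictated by Gagliardo-Nirenberg interpolation needed to control $\diver(\ve\otimes\ve)$. Aubin-Lions produces strong convergence $\ve\to\vv$ in $L^2(0,T;H)$, which again includes the boundary. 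The crucial point is to verify the $\limsup$-conditions~\eqref{assomez} and~\eqref{assomezB}: testing the $\e$-energy identity with $\ve$, combining weak lower semicontinuity of $\|\cdot\|_H^2$ with strong convergence of $\langle\f,\ve\rangle$ and the passage $\tfrac12\|\ve(t)\|_H^2\to\tfrac12\|\vv(t)\|_H^2$ for a.e.~$t$, produces the desired inequality; Lemmata~\ref{Agraf} and~\ref{Bgraf} then identify $(\S,\D)\in\A$ a.e.~in~$Q$ and $(\sg,\vv)\in\B$ a.e.~on~$\Gamma$.

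The hardest step will be the passage to the limit in the convective term and the derivation of~\eqref{assomez} when $r\in(6/5,11/5)$, because in that range the limit $\vv$ is no longer an admissible test function in its own weak formulation; I plan to handle this via a Lipschitz-truncation technique adapted from~\cite{DiRuWo,BGMS2} to the dynamic slip setting, testing the $\e$-equation with a Lipschitz truncation of $\ve-\vv$ to extract an approximate local energy identity from which~\eqref{assomez} follows. For $r\ge 11/5$ one has $z=r$, the function $\vv\in L^r(0,T;V_r)$ is admissible already in the limit equation, and the energy inequality upgrades directly to the claimed energy equality. Finally, the strong attainment of the initial datum in $H$ follows from $\vv\in\C_w([0,T];H)$ combined with the energy inequality at $t\to 0_+$: the weak convergence $\vv(t)\tow\vv_0$ in $H$ together with $\limsup_{t\to 0_+}\|\vv(t)\|_H\le\|\vv_0\|_H$ upgrades to strong convergence in $H$.
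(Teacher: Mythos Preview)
Your two-level scheme ($n$ and $\e$) differs from the paper's three-level scheme ($n$, $\e$, and $\delta$), and the missing layer is not cosmetic. The paper introduces a cut-off $\Phi_\delta$ on the convective term (see~\eqref{cutoff} and the system~\eqref{NS2problem}); with this cut-off the nonlinear term is uniformly bounded, so at \emph{both} the Galerkin limit $n\to\infty$ (Theorem~\ref{NS2thm}) and the graph limit $\e\to 0_+$ (Theorem~\ref{NSrethm}) the limit function is an admissible test function in its own equation, the exact energy identity~\eqref{limitne} holds, and the Minty argument goes through directly. The Lipschitz truncation (Lemma~\ref{BDS}) is then used only once, at $\delta\to 0_+$, where both $(\Sd,\Dd)$ and the comparison pair $(\tS,\D)$ already lie in the \emph{same} fixed graph $\A$; this is what makes the monotonicity step~\eqref{limhvi} work.

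In your scheme the claim ``testing with $\ve$ is legal here since $\ve\in L^2(0,T;V)$'' is not justified. Without the cut-off, the $\e$-problem is a genuine three-dimensional Navier--Stokes system with a nonlinear $2$-graph viscosity; the convective term forces the test space to be $V_{5/2}$ (because $\ve\otimes\ve$ is only in $L^{5/3}(Q)$), while $\ve$ is merely in $L^2(0,T;V_2)$. Hence the energy \emph{equality} for $\ve$ is exactly the open Navier--Stokes energy-equality question, and the Minty step you need to identify $\Se=\S^*_\e(\De)$ in the bulk collapses. (Your boundary identification via strong $L^2(\Gamma)$ convergence and Lipschitz continuity of $\sg^*_\e$ is fine, but in the interior you only have weak convergence of gradients, and Lipschitz continuity of $\S^*_\e$ is useless without strong convergence of $\De^n$.) The same obstruction recurs at $\e\to 0_+$: to verify~\eqref{assomez} you compare with the limit energy identity, which again is unavailable for $r<11/5$.

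You anticipate using Lipschitz truncation at the $\e\to 0_+$ step, but note that you would then have to run it already at the $n\to\infty$ step as well, and at $\e\to 0_+$ the approximating pair lies in $\Aee$ while the target lies in $\A$, so the crucial nonnegativity $(\Se-\tS):(\De-\D)\ge 0$ used in~\eqref{limhvi} is no longer a consequence of the monotonicity of a single graph. The paper's cut-off $\Phi_\delta$ is precisely the device that confines all of these difficulties to one clean limit where the graph is already fixed; you should reinstate it.
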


To prove this result, we approach the problem~\eqref{problem} by a proper approximation. In order to define it, we introduce an auxiliary function $\Phi: \R \to \R$,
\begin{equation*}
\Phi(s):=
\begin{cases}
1 &\text{ if } |s| \in [0,1), \\
2-s &\text{ if } |s| \in [1,2), \\
0 &\text{ if } |s| \in [2,\infty),
\end{cases}
\end{equation*}
and for every $\delta \in (0,1)$, we define the cut-off function $\Phi_\delta:\R \to \R$,
\begin{equation}\label{cutoff}
\Phi_\delta(s):= \Phi(\delta s), ~\text{ therefore }~ \Phi_\delta(s)\to 1 ~\text{ as }~ \delta \to 0_+.
\end{equation}
This function helps us with splitting the approximation into two steps - in the first one, we converge in the graphs (i.e., with $\e$) with the cut-off convective term, and in the second one, we converge with~$\delta$ in order to obtain the result for the regular Navier--Stokes-like problem. But first, we prove the existence of a solution to the $\e, \delta$-approximating problem with cut--off convective term and continuous $2$-graphs $\Aee$ and $\Bee$.

Since we deal with a completely new setting of function spaces, we want to reprove all classical results in this new setting rigorously. Therefore, we also focus on attainment of initial condition and the validity of the energy inequality in detail. For such purposes, we define a certain function that is used frequently in what follows. For given arbitrary $T>0$,  $0< \kappa \ll 1$ and $t \in (0,T-\kappa)$, we consider $\eta \in \C^{0,1}([0,T])$ as a piece-wise linear function of three parameters, such that
\begin{equation}\label{eta}
\eta(\tau) :=
\begin{cases}
1 &\text{if } \tau \in [0,t),   \\
1 + \frac{t-\tau}{\kappa} &\text{if } \tau \in [t,t+\kappa),\\
0 &\text{if } \tau \in [t+\kappa, T].
\end{cases}
\end{equation}
This function is typically used in proofs on attainment of the initial data and on identification of the graphs.

On the other hand, we do not discuss in detail the standard methods and estimates related to Navier-Stokes-like systems and refer rather to \cite{BGMS2,DiRuWo,Lady3} for details.

\subsection{Existence for the \texorpdfstring{$\e,\delta$}{e,d}-approximating problem}

First, we prove existence of a solution $(\ve, \Se, \se)$ for every $\e \in (0,1)$ and for every $\delta \in (0,1)$ to the problem
\begin{subequations}\label{NS2problem}
\begin{align}
\diver \ve &=0 &&\text{in } Q,\label{NS2b}\\
\pt \ve + \diver\left((\ve \otimes \ve) \Phi_\delta(|\ve|^2)\right)- \diver \Se + \nabla p &= \f &&\text{in } Q,\label{NS2a} \\
-(\Se \n)_\tau &= \alpha \se + \beta \pt \ve &&\text{on } \Gamma,\label{NS2c}\\
\ve\cdot\n&=0 &&\text{on } \Gamma,\label{NS2d}\\
\ve(0)&=\vv_0  &&\text{in } \overline{\o},\label{NS2e}\\
(\Se,\Dve) &\in \Aee &&\text{in } Q,\label{NS2f}\\
(\se,\ve) &\in \Bee &&\text{on } \Gamma,\label{NS2g}
\end{align}
\end{subequations}
where $\Aee$ and $\Bee$ are constructed from $\A$ and $\B$, respectively, according to~\eqref{Aee}, and they are $2$-graphs with selection due to Lemmata \ref{Agraf} and \ref{Bgraf}.

For simplicity, we drop using the index $\e$ and from now on, we look for $(\vv, \S, \sg)$ instead of $(\ve, \Se, \se)$, however, we continue writing $\Aee$ and $\Bee$ to enhance the use of the approximating graphs.

We know that for $\Aee$, there exists $\S^*:\R^{d \times d} \to \R^{d \times d}$ such that
\begin{equation}\label{selectAN}
(\S, \D) \in \Aee ~\Longleftrightarrow~ \S = \S^*(\D).
\end{equation}
Similarly, for $\Bee$, we denote the selection $\sg^*: \R^d \to \R^d$ and
\begin{equation}\label{selectBN}
(\sg, \vv) \in \Bee ~\Longleftrightarrow~ \sg = \sg^*(\vv).
\end{equation}
Moreover, both $\S^*$ and $\sg^*$ are Lipschitz continuous and uniformly monotone.

\begin{theorem}\label{NS2thm}
Let $T>0$, $\alpha, \beta > 0$, $\delta \in (0,1)$, $\o \subset \R^3$ be Lipschitz, $\f\in L^2(0,T;V^*)$ and $\vv_0 \in H$. Let $\Phi_\delta$ be defined by \eqref{cutoff}. Then there exists a triplet $(\vv, \S, \sg)$ such that
\begin{align*}
\vv &\in  L^2(0, T; V) \cap  \C([0, T]; H), \\
\pt \vv &\in L^2(0, T; V^*),\\
\S &\in L^2(Q),\\
\sg &\in L^2(\Gamma),
\end{align*}
and~\eqref{NS2problem} is satisfied in the weak sense, i.e., for almost all $t\in (0,T)$ and for all $\vp\in V$,
\begin{subequations}
\begin{equation}\label{WFNS2}
\langle \pt \vv,  \vp \rangle_V -\io (\vv \otimes \vv)\Phi_\delta(|\vv|^2) : \nabla \vp \d x + \io \S : \DD\vp \d x + \alpha \ipo \sg \cdot \vp \d S  = \langle \f,  \vp \rangle_V,
\end{equation}
and $(\S, \D) \in \Aee$ almost everywhere in~$Q$, and $(\sg, \vv) \in \Bee$ almost everywhere on~$\Gamma$. The initial condition is attained in the strong sense,
\begin{equation}\label{ICNS2}
\lim_{t\to 0_+} \|\vv(t) - \vv_0\|_H = 0.
\end{equation}
\end{subequations}
\end{theorem}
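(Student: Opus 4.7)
\textbf{Proof plan for Theorem \ref{NS2thm}.}

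The plan is a Galerkin scheme in the orthonormal-in-$H$ / orthogonal-in-$V$ basis $\{\w_i\}$ constructed in Appendix~\ref{basis}. Setting $\vv^N(t,x)=\sum_{i=1}^N c_i^N(t)\w_i(x)$, I project equation~\eqref{WFNS2} onto $V_N:=\lin\{\w_1,\ldots,\w_N\}$, using $\S^N:=\S^*(\D\vv^N)$ and $\sg^N:=\sg^*(\vv^N)$. Because these selections exist and are Lipschitz (Lemmata~\ref{Agraf},~\ref{Bgraf}), the resulting ODE system for $(c_i^N)$ is Carathéodory; the cut-off $\Phi_\delta$ makes the convective coefficient uniformly bounded by $C/\sqrt{\delta}$, so local solvability is immediate.

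For the a priori estimate I test by $\vv^N$. The crucial cancellation of the truncated convective term is obtained via integration by parts: introducing the primitive $F_\delta(s):=\int_0^s\Phi_\delta(\tau)\,d\tau$ one finds $\int_\o(\vv^N\otimes\vv^N)\Phi_\delta(|\vv^N|^2):\nabla\vv^N\,dx=\tfrac12\int_\o\vv^N\cdot\nabla F_\delta(|\vv^N|^2)\,dx=0$, using $\diver\vv^N=0$ and $\vv^N\cdot\n=0$. Combined with Lemma~\ref{lemaodhmin} (and its boundary analogue), this yields
\[
 \tfrac12\|\vv^N(t)\|_H^2+\tilde C_1\int_0^t\!\int_\o(|\S^N|^2+|\D\vv^N|^2)\,dx\,d\tau+\alpha\tilde C_1\int_0^t\!\int_{\po}(|\sg^N|^2+|\vv^N|^2)\,dS\,d\tau\le C,
\]
hence global existence of $c_i^N$ and uniform bounds for $\vv^N$ in $L^\infty(0,T;H)\cap L^2(0,T;V)$, for $\S^N$ in $L^2(Q)$, and for $\sg^N$ in $L^2(\Gamma)$. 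For $\pt\vv^N$ I use that the $V$-orthogonal projection $P_N:V\to V_N$ is $V$-continuous (Appendix), so $\langle\pt\vv^N,\vp\rangle_V=\langle\pt\vv^N,P_N\vp\rangle_V$ can be estimated directly from the Galerkin equation; the cut-off convective term is bounded in $L^2(0,T;V^*)$ by $C(\delta)$, giving $\pt\vv^N\in L^2(0,T;V^*)$ uniformly.

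Since $V\hookrightarrow\hookrightarrow H$ (Section~\ref{spaces}, $r=2>3/2$), Aubin--Lions gives $\vv^N\to\vv$ strongly in $L^2(0,T;H)$, which — thanks to the explicit structure of the $H$-norm in~\eqref{H} — automatically forces $\tr\vv^N\to\tr\vv$ strongly in $L^2(\Gamma)$. I extract further weak limits $\S^N\tow\S$ in $L^2(Q)$, $\sg^N\tow\sg$ in $L^2(\Gamma)$ and $\vv^N\stackrel{*}{\tow}\vv$ in $L^\infty(0,T;H)$. Passing to the limit in the Galerkin formulation is now routine: the linear terms are weakly continuous, and the truncated convective term passes by strong convergence in $L^2(0,T;H)$ together with boundedness of $\Phi_\delta$. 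This yields~\eqref{WFNS2} for the limit with $(\S,\sg)$ in place of the nonlinear compositions.

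It remains to identify $\S=\S^*(\D\vv)$ and $\sg=\sg^*(\vv)$. Since on the Galerkin level the equation tested by $\vv^N$ is actually an equality, I deduce the energy identity for $\vv^N$; comparing it with the corresponding identity for the limit (obtained by using $\vv$ as test function, justified by $\pt\vv\in L^2(0,T;V^*)$ and $\vv\in L^2(0,T;V)$, via the integration-by-parts formula $\tfrac{d}{dt}\tfrac12\|\vv\|_H^2=\langle\pt\vv,\vv\rangle_V$ in the Gelfand triplet) and using weak lower semicontinuity of $\vv\mapsto\|\vv(T)\|_H^2$ gives
\[
\limsup_{N\to\infty}\Bigl(\int_0^T\!\!\io\S^N\!:\!\D\vv^N\,dx\,d\tau+\alpha\int_0^T\!\!\ipo\sg^N\!\cdot\!\vv^N\,dS\,d\tau\Bigr)\le\int_0^T\!\!\io\S\!:\!\D\vv\,dx\,d\tau+\alpha\int_0^T\!\!\ipo\sg\!\cdot\!\vv\,dS\,d\tau.
\]
Applying the Minty-type conclusions of Lemmata~\ref{Agraf} and~\ref{Bgraf} (separately, after splitting the limsup by the nonnegativity coming from monotonicity) identifies both graphs. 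Strong attainment of the initial datum $\vv(t)\to\vv_0$ in $H$ then follows from $\vv\in\C_w([0,T];H)$ combined with the energy equality at $t\to 0_+$.

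The main obstacle I anticipate is the nonstandard Gelfand triplet: I must keep track throughout of the boundary contribution $\beta\|\tr\vv\|_{L^2(\po)}^2$ inside the $H$-norm and in the duality~\eqref{dualityV}, both for the time-derivative estimate (where $V$-continuity of the projection onto the special basis of Appendix~\ref{basis} is essential) and for the passage to the limit in the energy identity, since the familiar identity $\langle\pt\vv,\vv\rangle_V=\tfrac12\tfrac{d}{dt}\|\vv\|_H^2$ here carries the extra boundary term and must be established in this specific triplet.
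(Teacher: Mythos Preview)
Your plan coincides with the paper's proof: Galerkin approximation in the basis of Appendix~\ref{basis}, energy estimate by testing with $\vv^N$ (including the same primitive-of-$\Phi_\delta$ trick for the convective cancellation), time-derivative bound via $V$-continuity of $P^N$, Aubin--Lions, and a Minty argument for identification.

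One point needs correction. Lemmata~\ref{Agraf} and~\ref{Bgraf} are about the passage $\Aee\to\A$ as $\e\to 0_+$; applying them here would yield $(\S,\D)\in\A$ rather than $(\S,\D)\in\Aee$, and in any case their hypotheses concern a sequence indexed by $\e$, not by $N$ with $\e$ fixed. The paper instead runs the classical Minty trick directly with the Lipschitz selections $\S^*$, $\sg^*$: from the joint limsup inequality one gets, for arbitrary $\W\in L^2(Q)$ and $\w\in L^2(\Gamma)$, that $\int_{Q_t}(\S-\S^*(\W)):(\D-\W)+\alpha\int_{\Gamma_t}(\sg-\sg^*(\w))\cdot(\vv-\w)\ge 0$, and then sets $\W=\D\pm\mu\Z$, $\w=\vv\pm\mu\z$ and lets $\mu\to 0_+$ using continuity of the selections. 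Your phrase ``splitting the limsup by nonnegativity'' is not a valid step on its own; either keep the two contributions together in the Minty inequality as the paper does, or exploit your own observation that $\vv^N\to\vv$ strongly in $L^2(\Gamma)$ together with Lipschitz continuity of $\sg^*$ to identify $\sg=\sg^*(\vv)$ directly, after which the boundary term converges and the interior limsup can be isolated.
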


\subsubsection*{Proof of Theorem \ref{NS2thm}}

Let $\{\w_i\}_{i\in \N}$ be a basis of $V$ constructed in Appendix~\ref{basis}. Recall the definition of the selections \eqref{selectAN} and \eqref{selectBN} for $\Aee$ and $\Bee$, respectively. For every $n\in \mathbb{N}$, we define the Galerkin approximation
\begin{equation}\label{gaN2}
\vv^n(t,\x):= \sum_{i=1}^{n} c^n_i(t) \w_i(\x)~\text{ for }~(t, \x) \in Q,
\end{equation}
where the functions $c^n_i(t)$ are defined such that for $i=1,\dots,n$, they solve the following system of ordinary differential equations
\begin{subequations}
\begin{equation}\label{galerkinN2}
\begin{aligned}
(\pt \vv^n, \w_i)_H  &-\io (\vv^n \otimes \vv^n)\Phi_\delta(|\vv^n|^2):\nabla \w_i \d x+ \io\! \S^*(\D^n) :\DD\w_i \d x \\
&+ \alpha \ipo \sg^*(\vv^n) \cdot \w_i \d S =\langle \f, \w_i \rangle_V
\end{aligned}
\end{equation}
with initial conditions
\begin{equation}\label{galerkinICN2}
c^n_i( 0)= \io \vv_0 \cdot \w_i \d x + \beta \ipo \vv_0 \cdot \w_i \d S =(\vv_0,\w_i)_H.
\end{equation}
\end{subequations}
Due to the Carath\'eodory theory (recall that the selections are Lipschitz continuous), existence of such a solution is obtained in an interval $[0, t^n)$ for some $t^n \in (0,T)$ and thanks to the uniform estimates derived in the following part we can set $t^n=T$. Furthermore, recall the definition of the projection $P^n$ \eqref{proj} to see (using \eqref{gaN2} and \eqref{galerkinICN2}) that
\begin{equation}\label{odhpocpod}
\|\vv^n(0)\|^2_H = \| \sum_{i=1}^n (\vv_0,\w_i)_H \w_i\|^2_H = \|P^n \vv_0\|^2_H \leq \|\vv_0\|^2_H,
\end{equation}
where we used the estimate \eqref{vlH}, as $\{\w_i\}_{i\in \N}$ is an orthonormal basis of $H$. Moreover, from \eqref{proj} and \eqref{gaN2} we get for any $\vp \in V$ that
\begin{equation}\label{PffH}
(\vv^n, P^n\vp)_H = \left(\sum_{j=1}^n c^n_j \w_j, \sum_{i=1}^n (\vp, \w_i)_H \w_i\right)_H = \left(\sum_{j=1}^n c^n_j \w_j, \vp \right)_H = (\vv^n, \vp)_H.
\end{equation}
The formula \eqref{galerkinN2} holds only for $\w$ from the linear hull of $\{\w_j\}_{j=1}^{n}$. However, with the help of~\eqref{PffH}, we can work with \eqref{galerkinN2} for any $\w \in V$.

\subsubsection*{Uniform estimates}

We multiply the $i$-th equation in \eqref{galerkinN2} by $c^n_i(t)$ and sum them together over $i=1,\dots,n$ to obtain\footnote{The convective term vanishes due to the fact that $\diver \vv^n =0$ in $Q$ and $\vv^n \cdot \n =0$ on $\Gamma$ as it follows from the following computation (for $P$ a primitive function to $\Phi_{\delta}$)
\begin{equation}\label{vorezcanc}
\begin{aligned}
\io &(\vv\otimes \vv) \Phi_{\delta}(|\vv|^2) : \nabla \vv \d x =\frac12 \io \vv\cdot \nabla |\vv|^2 \Phi_{\delta}(|\vv|^2)\d x\\
&=\frac12 \io \vv\cdot \nabla P(|\vv|^2) \d x=-\frac12 \io P(|\vv|^2)\diver \vv \d x=0.
\end{aligned}
\end{equation}
}
\begin{align}\label{testvnN2}
\frac12 \dt \|\vv^n\|^2_H + \io \S^*(\D^n) : \D^n \d x &+ \alpha\ipo  \sg^*(\vv^n) \cdot \vv^n \d S = \langle \f,  \vv^n \rangle_V,
\end{align}
Since, $\Aee$ and $\Bee$ are $2$-graphs and $(\S^*(\D^n), \D^n)\in \Aee$ and $(\sg^*(\vv^n), \vv^n)\in \Bee$, we can use the coercivity assumption $(A4)$, the Young and the Korn inequalities and the estimate \eqref{odhpocpod}, and it follows from \eqref{testvnN2} that there is a constant $C$ depending only on $\f$, $\vv_0$, $\Omega$ and $\varepsilon$ such that
\begin{equation}\label{uniformvnN2}
\begin{aligned}
\|\vv^n\|_{L^2(0,T;V) \cap L^\infty(0,T;H)} \leq C ~\text{ uniformly with respect to } n, \\
\|\S^*(\D^n)\|_{L^2(Q)} + \|\sg^*(\vv^n)\|_{L^2(\Gamma)} \leq C ~\text{ uniformly with respect to } n.
\end{aligned}
\end{equation}

Using the properties of the projection $P^n$, see \eqref{PffH}, we  can reconstruct the estimate for the time derivative for $\pt \vv^n$. We know that $\pt \vv^n \in H$ and using this information we show that it is also uniformly bounded in $L^{2}(0,T;V^*)$. For an arbitrary $\vp \in V$, using \eqref{PffH} and the continuity of the projection in $V$ \eqref{vlV},
\begin{align*}
\langle \pt \vv^n, \vp \rangle_V &= (\pt \vv^n, P^n\vp)_H = \io \pt \vv^n \cdot (P^n\vp) \d x + \beta \ipo \pt \vv^n \cdot (P^n\vp) \d S \\
&=\io\left((\vv^n\otimes\vv^n)\Phi_\delta(|\vv^n|^2) -\S^*(\D^n) \right)\!:\! \nabla (P^n\vp) \d x \\
&\quad - \alpha\ipo \sg^*(\vv^n) \!\cdot\! (P^n\vp) \d S  + \langle \f, P^n\vp \rangle_V \\
&\leq C \left( \|\S^*(\D^n)\|_{L^2(\o)} + \|\sg^*(\vv^n)\|_{L^2(\po)}+ C(\delta) + \|\f\|_{V^*}\right)\|\vp\|_V.
\end{align*}
Thus, we define $\mn := \{\vp \in V, \|\vp\|_V \leq 1\}$ and recall  \eqref{uniformvnN2}, and the assumption on $\f$ to obtain
\begin{equation}\label{uniformptvnN2}
\begin{split}
\iT \|\pt \vv^n\|^2_{V^*} \d t &=\iT \left(\sup_{\vp \in \mn} \langle \pt \vv^n, \vp\rangle _{V}\right)^2 \d t \\
 &\le C\iT  \left( \|\S^*(\D^n)\|_{L^2(\o)} + \|\sg^*(\vv^n)\|_{L^2(\po)}+ C(\delta) + \|\f\|_{V^*}\right)^2  \d t\\
 &\leq C ~\text{ uniformly with respect to } n.
\end{split}
\end{equation}

\subsubsection*{Limit passage}\label{NS2limit}

By virtue of the uniform estimates \eqref{uniformvnN2} and  \eqref{uniformptvnN2}, reflexivity of spaces~$V$ and~$V^*$, the Aubin--Lions lemma (recall the compact embedding $V\hookrightarrow\hookrightarrow H$) and integration by parts for Sobolev Bochner functions, there exist (not relabelled) subsequences and functions $\vv$, $\S$ and $\sg$ such that as $n \to \infty$,
\begin{subequations}\label{convergencesN2}
\begin{align}
\vv^n &\tow^* \vv &&\text{weakly$^*$ in } L^\infty(0, T; H),\\
\vv^n &\tow \vv &&\text{weakly in } L^2(0, T; V), \label{weakVNS2}\\
\pt \vv^n &\tow \pt \vv &&\text{weakly in } L^2(0,T;V^*), \\
\vv^n &\to \vv &&\text{strongly in } L^2(0, T; H), \label{convergence-strongNS2} \\
(\vv^n \otimes \vv^n)\Phi_\delta(|\vv^n|^2) &\to (\vv \otimes \vv)\Phi_\delta(|\vv|^2) &&\text{strongly in } L^\gamma(Q), \gamma \in [1, \infty)\label{convergenceLg2}, \\
\S^*(\D^n) &\tow \S &&\text{weakly in } L^2(Q), \\
\sg^*(\vv^n) &\tow \sg &&\text{weakly in } L^2(\Gamma).
\end{align}
\end{subequations}
We add a short comment on~\eqref{convergenceLg2}. For fixed $\delta$, $\vv \mapsto (\vv \otimes \vv)\Phi_\delta(|\vv|^2)$ is bounded and continuous, and this together with~\eqref{convergence-strongNS2} imply the almost everywhere convergence of $\vv^n$ to $\vv$ in $Q$. Then~\eqref{convergenceLg2} holds, and the result follows e.g. by the use of the Lebesgue dominated convergence theorem.

In \eqref{galerkinN2}, for any $\psi \in C^1(0, T)$ and $\vp \in V$, we multiply the $i$-th equation by $\psi (\vp, \w_i)_H$, sum over $i=1,\ldots, k$ for $k\leq n$ and integrate over $t\in(0,T)$ to get for every $k=1,\ldots, n$
\begin{align*}
\iT &\left(\pt \vv^n, P^k\vp \right)_H \psi \d t -\iq (\vv^n \otimes \vv^n)\Phi_\delta(|\vv^n|^2) : \nabla (P^k\vp) \psi \d x \d t \\
&+ \!\iq \S^*(\D^n)\!:\!\DD(P^k\vp)\psi \d x \d t + \alpha\!\ig \sg^*(\vv^n) \!\cdot\! (P^k\vp) \psi\d S \d t = \!\iT\! \langle \f, P^k\vp \rangle_V \psi \d t.
\end{align*}
Using the convergence results~\eqref{convergencesN2}, we can proceed with the limit $n \to \infty$. The limit integral holds for any~$\psi$, therefore we obtain
\begin{equation*}
\begin{aligned}
\langle \pt \vv, P^k\vp \rangle_V &-\io (\vv \otimes \vv)\Phi_\delta(|\vv|^2):\nabla (P^k\vp) \d x+ \io \S:\DD(P^k\vp) \d x \\
&+\alpha \ipo \sg \cdot (P^k\vp) \d S = \langle \f, P^k\vp \rangle_V
\end{aligned}
\end{equation*}
for almost all $t\in (0,T)$ and for all $k \in \N$. Finally, we can use the property of the projection $P^k \vp \to \vp$ in $V$ as $k \to \infty$ from \eqref{vlproj} and obtain the weak formulation~\eqref{WFNS2}.

\subsubsection*{Initial data attainment}

Since the initial condition involves also behavior on the boundary, we prove the attainment rigorously here, although it somehow follows step by step the standard setting with the only change in the definition of the function spaces. From the previous parts, we know that $\vv \in L^2(0,T; V)$ and $\pt \vv \in L^2(0,T; V^*)$, which implies that $\vv \in \C([0,T]; H)$. From the definition of the space $\C([0,T]; H)$, we get that
\begin{equation}\label{strongconvpocpod}
\vv(t) \to \vv(0) ~\text{ strongly in } H \text{ as } t \to 0_+.
\end{equation}
In what follows, we show that $\vv(t) \tow \vv_0$ weakly in $H$ as $t \to 0_+$, and these convergence results together identify the limit \eqref{ICNS2}, that we want to prove.

Let $0< \kappa \ll 1$ and $t \in (0,T-\kappa)$. We recall the definition of an auxiliary~$\eta$ in~\eqref{eta}, multiply~\eqref{galerkinN2} by this~$\eta$, and integrate over~$(0,T)$ to obtain for every $i = 1, \ldots, n$
\begin{align*}
\iT (\pt \vv^n, \w_i)_H \eta \d \tau &+ \iq \left(\S^*(\D^n) - (\vv^n \otimes \vv^n)\Phi_\delta(|\vv^n|^2)\right):\nabla \w_i \eta\d x \d \tau \\
&+ \alpha \ig \sg^*(\vv^n) \cdot \w_i \eta \d S \d \tau  = \iT \langle \f, \w_i\rangle_V \eta \d \tau.
\end{align*}
Next, we integrate by parts in the first term, use that $\eta(T)=0$, and the equality in \eqref{odhpocpod} ($\vv^n(0) = P^n \vv_0$), to get
\begin{align*}
-\iT (\vv^n, \w_i)_H \eta' \d \tau &+ \iq \left(\S^*(\D^n) - (\vv^n \otimes \vv^n)\Phi_\delta(|\vv^n|^2)\right):\nabla \w_i \eta\d x \d \tau \\
&+ \alpha \ig \sg^*(\vv^n) \cdot \w_i \eta \d S \d \tau= \iT \langle \f, \w_i\rangle_V \eta \d \tau+ (P^n \vv_0, \w_i)_H \eta(0),
\end{align*}
and this equation is ready for the use of the weak convergence results \eqref{convergencesN2} and the convergence of the projection \eqref{vlconv} to obtain for any $i \in \N$ that
\begin{align*}
-\iT (\vv, \w_i)_H \eta' \d \tau &+ \iq \left(\S - (\vv \otimes \vv)\Phi_\delta(|\vv|^2)\right):\nabla \w_i \eta\d x \d \tau \\
&+ \alpha \ig \sg \cdot \w_i \eta \d S \d \tau = \iT \langle \f, \w_i\rangle_V \eta \d \tau+ (\vv_0, \w_i)_H \eta(0).
\end{align*}
Next, we use the properties of $\eta$, namely that $\eta(\tau) = 1$ for $\tau\in [0,t)$, $\eta(\tau) = 0$ for $\tau\in (t + \kappa, T]$, and $\eta'(\tau) = -\frac{1}{\kappa}$ for $\tau \in (t,t + \kappa)$. Then we have
\begin{align*}
\frac{1}{\kappa}\int_t^{t+\kappa} (\vv, \w_i)_H \d \tau &+ \int_{Q_{t+\kappa}} \left(\S - (\vv \otimes \vv)\Phi_\delta(|\vv|^2)\right):\nabla \w_i \eta \d x \d \tau \\
 &+ \alpha \int_{\Gamma_{t+\kappa}} \sg \cdot \w_i \eta\d S \d \tau =  \int_0^{t+\kappa} \langle \f, \w_i\rangle_V \eta\d \tau + (\vv_0, \w_i)_H.
\end{align*}
Further, we wish to proceed with the limit as $\kappa \to 0_+$. In the first term, the integrand is well-defined ($\vv \in \C([0,T];H)$), and the mean-value integral converges to $(\vv(t), \w_i)_H$. In the other terms, we take the limit as $\kappa \to 0_+$ together with $t \to 0_+$, use that all quantities are integrable in appropriate spaces and arrive at
\begin{align*}
\lim_{t \to 0_+}(\vv(t), \w_i)_H = (\vv_0, \w_i)_H.
\end{align*}
This holds for every $i \in \N$, and since $\{\w_i\}_{i\in \N}$ is a basis in $H$, this is nothing but the weak convergence result we hoped for, and it identifies the strong limit in \eqref{strongconvpocpod} of the initial condition in $H$.

\subsubsection*{Graphs identification}

After proceeding with the limit, it remains to show that the limiting objects relate to each other in the way we want them to, i.e., that $(\S,\D) \in \Aee$ and $(\sg,\vv)\in \Bee$. To do so, we multiply~\eqref{testvnN2} by piece-wise linear~$\eta(t)$ defined in~\eqref{eta} and integrate over~$(0,T)$ to obtain
\begin{align*}
\int_{Q_{t + \kappa}} &\S^*(\D^n) : \D^n \eta \d x \d \tau +\alpha \int_{\Gamma_{t + \kappa}} \sg^*(\vv^n) \cdot \vv^n \eta \d S \d \tau \\
&= \int_0^{t+\kappa}\langle \f, \vv^n \rangle_V \eta \d \tau + \frac12 \|P^n \vv_0\|_H^2 - \frac{1}{2\kappa}\int_t^{t + \kappa} (\vv^n, \vv^n)_H \d \tau.
\end{align*}
Since $\S^*(\0) = \0$ and it is monotone (and the same holds for $\sg^*$), we have for every $n \in \N$
\begin{align*}
\S^*(\D^n) : \D^n &\geq 0,\\
\sg^*(\vv^n) \cdot \vv^n &\geq 0.
\end{align*}
Therefore,
\begin{align*}
\limsup_{n \to \infty} &\int_{Q_{t}} \S^*(\D^n) : \D^n \d x \d \tau + \alpha\int_{\Gamma_{t}} \sg^*(\vv^n) \cdot \vv^n \d S \d \tau \\
&\leq \limsup_{n \to \infty} \int_{Q_{t + \kappa}} \S^*(\D^n) : \D^n \eta \d x \d \tau + \alpha\int_{\Gamma_{t + \kappa}} \sg^*(\vv^n) \cdot \vv^n \eta \d S \d \tau \\
&= \limsup_{n \to \infty} \int_0^{t+\kappa}\langle \f, \vv^n \rangle_V \eta \d \tau + \frac12 \|P^n \vv_0\|_H^2 - \liminf_{n \to \infty} \frac{1}{2\kappa}\int_t^{t +\kappa} (\vv^n, \vv^n)_H \d \tau \\
&\leq \int_0^{t+\kappa}\langle \f, \vv \rangle_V \eta \d \tau + \frac12 \|\vv_0\|_H^2 - \frac{1}{2\kappa}\int_t^{t + \kappa} (\vv, \vv)_H \d \tau,
\end{align*}
where we used the results from \eqref{convergencesN2} and the weak lower semicontinuity of the norm. If we proceed with $\kappa \to 0_+$, we note that the left hand side is independent of $\kappa$, and on the right hand side, all quantities are well-defined for such limit (since $\vv \in \C([0,T]; H)$), and using again the weak lower semicontinuity of the norm we finally obtain for an arbitrary $t\in(0,T)$
\begin{equation}\label{limitujuce}
\begin{aligned}
\limsup_{n \to \infty} &\int_{Q_{t}} \S^*(\D^n) : \D^n \d x \d \tau + \alpha\int_{\Gamma_{t}} \sg^*(\vv^n) \cdot \vv^n \d S \d \tau \\
&\leq \it\langle \f, \vv \rangle_V \d \tau + \frac12 \left( \|\vv_0\|_H^2 - \|\vv(t)\|_H^2 \right).
\end{aligned}
\end{equation}
Now, we set $\vp := \vv$ in \eqref{WFNS2}, use~\eqref{vorezcanc}, integrate over time $(0,t)$, and use that we can integrate by parts in the duality (thanks to the fact that we have the Gelfand triplet) and the attainment of the initial value,
\begin{equation}\label{limitne}
\begin{aligned}
\int_{Q_{t}} \S : \D \d x \d \tau &+\alpha \int_{\Gamma_{t}} \sg \cdot \vv \d S \d \tau = \it\langle \f, \vv \rangle_V - \langle \pt \vv, \vv \rangle_V \d \tau \\
&= \it\langle \f, \vv \rangle_V \d \tau + \frac12 \left( \|\vv_0\|_H^2 - \|\vv(t)\|_H^2 \right).
\end{aligned}
\end{equation}
If we compare \eqref{limitujuce} and \eqref{limitne}, we obtain the condition
\begin{equation}\label{odhadbox}
\begin{aligned}
\limsup_{n \to \infty} &\int_{Q_{t}} \S^*(\D^n) : \D^n \d x \d \tau + \alpha\int_{\Gamma_{t}} \sg^*(\vv^n) \cdot \vv^n \d S \d \tau \\
&\leq \int_{Q_{t}} \S : \D \d x \d \tau + \alpha\int_{\Gamma_{t}} \sg \cdot \vv \d S \d \tau.
\end{aligned}
\end{equation}
Now, let $\W \in L^2(Q)$ and $\w \in L^2(\Gamma)$ be arbitrary, then by monotonicity of the graphs
\begin{align*}
0&\leq \iqt (\S^*(\D^n)-\S^*(\W)):(\D^n-\W) \d x \d \tau \\
&\quad + \alpha\igt (\sg^*(\vv^n)-\sg^*(\w)) \cdot (\vv^n-\w) \d S \d \tau \\
&=\iqt \S^*(\D^n):\D^n  \d x \d \tau + \alpha\igt \sg^*(\vv^n) \cdot \vv^n \d S \d \tau \\
&\quad -\iqt \S^*(\D^n):\W + \S^*(\W):(\D^n-\W) \d x \d \tau \\
&\quad -\alpha\igt \sg^*(\vv^n) \cdot \w + \sg^*(\w) \cdot (\vv^n - \w)\d S \d \tau.
\end{align*}
For the first two integrals, we use the estimate~\eqref{odhadbox}, and for the rest, we use the weak convergence results in \eqref{convergencesN2},
\begin{equation*}
0\leq \iqt (\S-\S^*(\W)):(\D-\W) \d x \d \tau + \alpha\igt (\sg-\sg^*(\w)) \cdot (\vv-\w) \d S \d \tau.
\end{equation*}
Now, we set $\W:= \D \pm \mu \Z$, $\w:= \vv \pm \mu \z$, divide by $\mu>0$ and let $\mu \to 0_+$ (at this point we use the continuity of the selections) to obtain for arbitrary $\Z$ and $\z$ and given $\alpha \geq 0$
\begin{equation}\label{minty}
0= \iqt (\S-\S^*(\D)):\Z \d x \d \tau +\alpha \igt (\sg-\sg^*(\vv)) \cdot \z \d S \d \tau.
\end{equation}
Here, we followed the Minty method from \cite{Minty}, with small modifications in order to adapt it to our setting.

Finally,  setting $\Z:=(\S-\S^*(\D))$ and $\z:=(\sg-\sg^*(\vv))$ in~\eqref{minty} implies\footnote{In case $\alpha=0$, it does not imply that $\sg = \sg^*(\vv)$. However, in this case, we can use \eqref{convergence-strongNS2} to obtain the strong convergence $\vv^n \to \vv$ in $L^2(0,T; L^2(\partial \Omega))$ and due to continuity of $\sg^*$ the claim follows.} that $\S = \S^*(\D)$ in $Q_t$ and $\sg = \sg^*(\vv)$ in $\Gamma_t$ for any $t\in (0,T)$, therefore $(\S, \D) \in \Aee$ almost everywhere in $Q$, and $(\sg,\vv)\in\Bee$ almost everywhere in $\Gamma$.

\subsection{Limit \texorpdfstring{$\varepsilon \to 0_+$}{e}}

Having the existence of a solution $(\ve, \Se, \se)$ for every $\e \in (0,1)$ and for every $\delta \in (0,1)$ to the problem~\eqref{NS2problem}, the next step is to prove the existence of a solution to the same problem, however, now with $\A$ a maximal monotone $r$-graph, $r\in(6/5,\infty)$, and $\B$ a maximal monotone $2$-graph, possibly without a Borel measurable selection. However, this was done in~\cite{BuMaMa} for a general parabolic problem, and we do not repeat the whole procedure here rigorously but we just point out the essential steps.
%
%
Indeed, due to the presence of the cut-off function $\Phi_\delta$, the convective term can be understood as a compact perturbation and satisfies the strong convergence result \eqref{convergenceLg2}, and therefore creates no additional difficulties in the limit passage as $\e \to 0_+$. Hence, the goal of this section is to prove the following result.
\begin{theorem}\label{NSrethm}
Let $T>0$, $\alpha, \beta> 0$, $\delta \in (0,1)$, $r \in (6/5,\infty)$, $\o \subset \R^3$ be Lipschitz, $\f\in L^{r'}(0,T;(V_r)^*)$ and $\vv_0 \in H$. Then there exists a triplet $(\vd, \Sd, \sd)$ such that
\begin{align*}
\vd &\in  L^r(0, T; V_r) \cap  \C([0, T]; H), \\
\pt \vd &\in L^{r'}(0, T; V_r^*),\\
\Sd &\in L^{r'}(Q),\\
\sd &\in L^\infty(0,T; L^2(\po)),
\end{align*}
and for almost all $t\in (0,T)$  and for all $\vp\in V_r$,
\begin{subequations}
\begin{equation}\label{WFNSre}
\begin{aligned}
\langle \pt \vd,  \vp \rangle_{V_r} &-\io \left((\vd \otimes \vd) \Phi_\delta(|\vd|^2)\right) : \nabla \vp \d x \\
&+ \io \Sd : \DD\vp \d x + \alpha \ipo \sd \cdot \vp \d S = \langle \f,\vp \rangle_{V_r}
\end{aligned}
\end{equation}
and $(\Sd, \Dd) \in \A$ almost everywhere in $Q$, and $(\sd, \vd) \in \B$ almost everywhere on $\Gamma$. The initial condition is attained in the strong sense,
\begin{equation}\label{ICNSre}
\lim_{t\to 0_+} \|\vd(t) - \vv_0\|_H = 0.
\end{equation}
\end{subequations}
\end{theorem}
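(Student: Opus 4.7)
Starting from the $\e$-level solutions $(\ve,\Se,\se)$ produced by Theorem~\ref{NS2thm}, my plan is to pass $\e \to 0_+$ with $\delta$ fixed via $\e$-uniform a priori estimates followed by a Minty--type graph identification based on Lemmata~\ref{Agraf} and~\ref{Bgraf}. First I would test~\eqref{WFNS2} by $\ve$ itself, which is admissible in the Gelfand triplet~\eqref{gelfand}, and exploit the cancellation of the truncated convective term from~\eqref{vorezcanc} to derive the energy identity
\[
\tfrac12 \|\ve(t)\|_H^2 + \int_{Q_t}\Se:\Dve \d x \d\tau + \alpha \int_{\Gamma_t}\se\cdot\ve \d S \d\tau = \tfrac12 \|\ve(0)\|_H^2 + \it\langle \f,\ve \rangle_{V_r} \d\tau.
\]
Combining this identity with the coercivity estimate~\eqref{odhadAr2} from Lemma~\ref{lemaodhmin} and the quadratic coercivity of the $2$-graph $\Bee$ on $\Gamma$, I would obtain $\e$-uniform bounds on $\ve$ in $L^\infty(0,T;H)$, on $\Dve$ in $L^{\min\{r,2\}}(Q)$, on $\Se$ in $L^{\min\{r',2\}}(Q)$, and on $(\se,\ve|_\Gamma)$ in $L^2(\Gamma)$. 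The $\delta$-cutoff keeps the convective term bounded in $L^\infty(Q)$, so arguing as in~\eqref{uniformptvnN2} via the continuity of the projection one controls $\pt \ve$ uniformly in $L^{r'}(0,T;V_r^*)$ (to apply Theorem~\ref{NS2thm} to data $\f \in L^{r'}(0,T;V_r^*)$ rather than $L^2(0,T;V^*)$, one first mollifies $\f$ and lets the mollification parameter tend to zero alongside $\e$).

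Next, reflexivity provides weakly converging subsequences $\ve \tow \vd$, $\pt \ve \tow \pt \vd$, $\Se \tow \Sd$ and $\se \tow \sd$ in the spaces identified above. The compact embedding $V_r \hookrightarrow L^2(\Omega)$ (valid for $r>6/5$ in $d=3$) together with Aubin--Lions gives strong convergence $\ve \to \vd$ in $L^2(0,T;L^2(\Omega))$. The boundary trace component is not automatically compact for $r$ close to $6/5$; here I would run a separate Aubin--Lions argument directly on $\po$ using the $\e$-uniform bound of $\ve|_\Gamma$ in $L^2(\Gamma)$ (from the $2$-graph coercivity of $\B$), the bound in $L^\infty(0,T;L^2(\po))$ (from $\ve \in L^\infty(0,T;H)$), and the time-derivative control via the Gelfand triplet, which yields $\ve|_\Gamma \to \vd|_\Gamma$ strongly in $L^2(\Gamma)$. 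The truncated convective term then converges strongly in every $L^\gamma(Q)$ exactly as in~\eqref{convergenceLg2}, and the passage to the limit in~\eqref{WFNS2} delivers~\eqref{WFNSre}.

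The crucial step is the graph identification, which I would handle by establishing the combined limsup inequality
\[
\limsup_{\e \to 0_+}\!\left( \int_{Q_t}\!\Se:\Dve \d x \d\tau + \alpha \int_{\Gamma_t}\!\se\cdot\ve \d S \d\tau \right) \leq \int_{Q_t}\!\Sd:\Dd \d x \d\tau + \alpha \int_{\Gamma_t}\!\sd\cdot\vd \d S \d\tau.
\]
This is derived by localizing the $\e$-level energy identity in time with the cut-off $\eta$ from~\eqref{eta}, passing to the limit first in $\e$ (exploiting weak lower semicontinuity of $\|\cdot\|_H$) and then in $\kappa \to 0_+$, and comparing with the energy equality obtained by testing~\eqref{WFNSre} against $\vd$ (legitimate through the integration by parts in the Gelfand triplet). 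Lemma~\ref{Agraf} then upgrades the weak convergences to the $L^{r'}(Q)\times L^r(Q)$ setting and yields $(\Sd,\Dd) \in \A$ a.e.\ in $Q$; in particular $\Dd \in L^r(Q)$ and, via Korn, $\vd \in L^r(0,T;V_r)$. Lemma~\ref{Bgraf} analogously gives $(\sd,\vd)\in \B$ a.e.\ on $\Gamma$, and the $L^\infty(0,T;L^2(\po))$ regularity of $\sd$ follows from the pointwise-in-time $L^2(\po)$ bound of $\vd$ provided by $\vd \in \C([0,T];H)$ together with the quadratic coercivity of $\B$. Strong attainment of the initial condition~\eqref{ICNSre} in $H$ is then obtained verbatim from the $\eta$-argument used in Theorem~\ref{NS2thm}.

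The main obstacle I foresee is the simultaneous bulk--and--boundary Minty identification: the two dissipations are coupled only through the single energy identity, and it is crucial that the Hilbert space $H$ in the Gelfand triplet~\eqref{gelfand} incorporates the boundary $L^2$-norm with coefficient exactly $\beta$, so that the boundary contribution to $\tfrac12 \|\vd(t)\|_H^2$ balances the boundary dissipation $\alpha \int_{\Gamma_t}\sd\cdot\vd$ and the combined limsup yields both graph inclusions simultaneously. A secondary obstruction is the lack of compactness of the trace map $W^{1,r}(\Omega)\hookrightarrow L^2(\po)$ when $r$ is close to $6/5$, which I bypass through the a priori $L^2(\Gamma)$-bound on $\ve|_\Gamma$ coming from the $2$-graph structure of $\B$.
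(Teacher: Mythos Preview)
Your strategy coincides with the paper's sketch: $\e$-uniform energy estimates from testing~\eqref{WFNS2} by $\ve$, weak limits, Aubin--Lions compactness, limit passage in~\eqref{WFNS2}, and the combined bulk--boundary $\limsup$ inequality (obtained by comparing the $\e$-level energy identity with the limit energy equality) to feed Lemmata~\ref{Agraf} and~\ref{Bgraf}. The attainment of initial data and the $L^\infty(0,T;L^2(\po))$ bound for $\sd$ are handled exactly as you indicate.

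One point needs correction. For $r<2$ Lemma~\ref{lemaodhmin} bounds $\Se$ uniformly only in $L^2(Q)$, not in $L^{r'}(Q)$, so $\pt\ve$ is uniformly controlled merely in $L^{\min(2,r')}(0,T;V_{\max(2,r)}^*)$, not in $L^{r'}(0,T;V_r^*)$ as you claim. The paper therefore first passes to the limit with test functions $\vp\in V_{\max(2,r)}$, then reads off $\Sd\in L^{r'}(Q)$ and $\vd\in L^r(0,T;V_r)$ from the \emph{first} part of Lemma~\ref{Agraf} (which requires only the bound~\eqref{intbound}, not yet the $\limsup$~\eqref{assomez}), and only afterwards upgrades both $\pt\vd$ to $L^{r'}(0,T;V_r^*)$ and the test space to $V_r$. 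This two-step order is essential and also makes the testing of~\eqref{WFNSre} by $\vd$ legitimate \emph{before} the graph identification, removing the circularity latent in your ordering. Concerning your trace-compactness worry for $6/5<r\le 3/2$: the paper simply settles for $\ve\to\vd$ strongly in $L^1(\Gamma)$, which follows from Aubin--Lions because the trace $W^{1,r}(\Omega)\to L^q(\po)$ is compact for small $q$ whenever $r>6/5$; no separate boundary Aubin--Lions is needed.
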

\subsubsection*{Sketch of the proof of Theorem~\ref{NSrethm}:}
We use Theorem~\ref{NS2thm} and for any $\varepsilon\in (0,1)$ we have the solution $(\vv^{\varepsilon}, \S^{\varepsilon}, \se)$ fulfilling \eqref{WFNS2}. Setting, $\vp:=\vv^{\varepsilon}$ in \eqref{WFNS2} and following the estimates done in preceding section, we obtain the starting inequality
\begin{equation}\label{addM}
\sup_{t\in(0,T)}\|\vv^{\varepsilon}(t)\|_H^2 + \iT \io \S^{\varepsilon}: \DD\vv^{\varepsilon} \d x \d t + \iT \ipo \se\cdot \vv^{\varepsilon} \d x \d t \le C~\text{ uniformly with respect to $\varepsilon$}.
\end{equation}
Next, since $(\S^{\varepsilon}, \DD \vv^{\varepsilon})\in \Aee$ and $(\se,\vv^{\varepsilon})\in \Bee$, we can use Lemma~\ref{Agraf} and Lemma~\ref{Bgraf} and thanks to \eqref{addM}, we have
\begin{equation}\label{addM2}
\begin{aligned}
\vv^{\varepsilon} &\tow^* \vv &&\text{weakly$^*$ in } L^\infty(0, T; H),\\
\vv^{\varepsilon} &\tow \vv &&\text{weakly in } L^{\min(r,2)}(0,T; V_{\min(r,2)}), \\
\S^{\varepsilon} &\tow \S &&\text{weakly in } L^{\min(2,r')}(Q), \\
\se &\tow \sg &&\text{weakly in } L^2(\Gamma).
\end{aligned}
\end{equation}
In addition, it also follows from Lemma~\ref{Agraf} and the Korn inequality that
\begin{equation}\label{spaceMB}
\S\in L^{r'}(Q) \quad \textrm{and} \quad \vv \in L^r(0,T; V_r).
\end{equation}
Then, following the computation in \eqref{uniformptvnN2} and using \eqref{addM2}, we also have
\begin{equation}\label{uniformptvepsN2}
\begin{split}
&\iT \|\pt \vv^{\varepsilon}\|^{\min(2,r')}_{V_{\max(2,r)}^*} \d t \\
&\le C\iT  \left( \|\S^{\varepsilon}\|_{L^{\min(2,r')}(\o)} + \|\se\|_{L^2(\po)}+ C(\delta) + \|\f\|_{V^*_{\max(2,r')}}\right)^{\min(2,r')}  \d t\\
 &\leq C ~\text{ uniformly with respect to } \varepsilon
\end{split}
\end{equation}
and consequently using also the Aubin--Lions lemma and the Trace theorem, we deduce
\begin{equation}\label{addM3}
\begin{aligned}
\pt\vv^{\varepsilon} &\tow \pt\vv &&\text{weakly in } L^{\min(r',2)}(0, T; V_{\max(r,2)}^*),\\
\vv^{\varepsilon} &\to \vv &&\text{strongly in } L^{1}(Q), \\
\vv^{\varepsilon} &\to \vv &&\text{strongly in } L^{1}(\Gamma).
\end{aligned}
\end{equation}
Having \eqref{addM2}, \eqref{uniformptvepsN2} and \eqref{addM3}, we can easily let $\varepsilon\to 0_+$ in \eqref{WFNS2} to obtain  \eqref{WFNSre} with one proviso, namely, that $\vp \in V_{\max(2,r)}$. However, thanks to \eqref{spaceMB}, we can improve the estimate for time derivative and conclude that
\begin{equation}\label{addM4}
\begin{aligned}
\pt\vv \in L^{r'}(0, T; V_{r}^*)
\end{aligned}
\end{equation}
and that \eqref{WFNSre} holds true for all $\vp\in V_r$. The attainment of the initial condition can be shown exactly as in the proof of Theorem~\ref{NS2thm}.

The crucial part is to check that $(\S, \DD\vv)\in \mathcal{A}$ and $(\sg,\vv)\in \mathcal{B}$. For this purpose, it is just enough to verify remaining assumptions of Lemma~\ref{Agraf}  and Lemma~\ref{Bgraf}, namely to show that
$$
\limsup_{\varepsilon \to 0_+} \int_Q \S^{\varepsilon} : \DD \vv^{\varepsilon} \d x \d t + \int_{\Gamma} \se\cdot \vv^{\varepsilon} \d S \d t\le \int_Q \S : \DD \vv \d x \d t + \int_{\Gamma} \sg\cdot \vv \d S \d t.
$$
This can be however achieved by repeating the procedure from the proof of Theorem~\ref{NS2thm}, namely, we set $\vp:=\vv^{\varepsilon}$ in the equation for $\vv^{\varepsilon}$ \eqref{WFNS2}, and we set $\vp:=\vv$ in the equation for $\vv$ \eqref{WFNSre}, let $\varepsilon\to 0_+$ and compare the limit. We do not provide more details here, since it is very similar to the preceding section and almost exactly the same as in \cite{BuMaMa}.

\subsection{Proof of Theorem \ref{uNSrthm}}

Having Theorem \ref{NSrethm} in hands, we proceed to the proof of Theorem~\ref{uNSrthm}, which in this situation means to explain the procedure of taking the limit as $\delta \to 0_+$ in~\eqref{WFNSre}.
We consider $r > 6/5$ and for every $\delta \in (0,1)$, we have $(\vd, \Sd, \sd)$, a solution according to Theorem~\ref{NSrethm} such that $(\Sd, \Dd) \in \A$ almost everywhere in $Q$ and $(\sd, \vd)\in \B$ almost everywhere on $\Gamma$.

\subsubsection*{Uniform estimates and limit passage}

To obtain a~priori estimates, we set $\vp := \vd$ in \eqref{WFNSre} (the term with the convective term cancels due to \eqref{vorezcanc}), integrate over time $(0,t)$, integrate by parts in the first term, use \eqref{odhpocpod} for the initial condition and ``usual" estimate for the duality on the right hand side with the help of the H\"older and Young inequalities, to obtain
\begin{equation*}
\sup_{t \in (0,T)}\|\vd(t)\|_H^2 + \iq \Sd \!:\! \Dd \d x \d t + \alpha \ig \sd \!\cdot\! \vd \d S \d t \leq C ~\text{uniformly with respect to } \delta.
\end{equation*}
Due to the $r$-coercivity of $\A$ and $2$-coercivity of $\B$, we obtain that
\begin{equation}\label{adMB7}
\begin{aligned}
\|\vd\|_{L^\infty(0,T;H)\cap L^r(0;T;V_r)\cap L^2(\Gamma)} &\leq C ~\text{uniformly with respect to } \delta,\\
\|\Sd\|_{L^{r'}(Q)} + \|\sd\|_{L^2(\Gamma)} &\leq C ~\text{uniformly with respect to } \delta.
\end{aligned}
\end{equation}
To improve the estimate of the terms $\sd, \vd$ on the boundary, we use that $\beta >0$ to obtain
\begin{equation*}
\vd \in L^\infty(0,T;H) \implies \vd \in L^\infty(0,T;L^2(\po)).
\end{equation*}
Then we can estimate
\begin{equation*}
C_1\left( |\sd|^2 + |\vd|^2 \right) - C_2 \leq \sd \cdot \vd \leq \frac{C_1^2}{2} |\sd|^2 + C |\vd|^2,
\end{equation*}
and subsequently
\begin{equation}\label{unifse}
\begin{aligned}
\sup_{t \in (0,T)} \|\sd(t)\|^2_{L^2(\po)} &\leq C \! \sup_{t \in (0,T)} \ipo \left(1+ |\vd|^2 \right) \d S \leq C ~\text{uniformly with respect to } \delta.
\end{aligned}
\end{equation}
Furthermore, since $\delta$ is not fixed here, we cannot claim that the convective term remains bounded and therefore need more precise estimate on $\vd$. To do so, we recall the interpolation inequality
$$
\|\vd \|_{L^{\frac{5r}{3}}(\o)}^{\frac{5r}{3}} \le C \|\vd \|_{L^{2}(\o)}^{\frac{2r}{3}} \|\vd \|_{V_r}^r.
$$
Then, it follows from the uniform estimate \eqref{adMB7} (recall that $r\ge 6/5$) that
\begin{align}\label{adMB9}
\|\vd\|_{L^{\frac{5r}{3}}(Q)}&\leq C ~\text{uniformly with respect to } \delta.
\end{align}
Next, we explain the definition of $z$ and $z':= \min\{r', 5r/6\}$. Using \eqref{adMB9}, we obtain that
\begin{equation*}
\iT\|(\vd \otimes \vd) \Phi_\delta(|\vd|^2)\|_{L^{\frac{5r}{6}}(\o)}^{\frac{5r}{6}} \d t\leq \iT\|\vd\|^{\frac{5r}{3}}_{L^{\frac{5r}{3}}(\o)}\d t \leq C ~\text{uniformly with respect to } \delta.
\end{equation*}
Then, recalling all above uniform $\delta$-independent estimates, we can also observe the following bound for the time derivative (we skip the computation identical to e.g.~\eqref{uniformptvepsN2})
\begin{align*}
&\iT\|\pt\vd\|^{z'}_{(V_z)^*} \d t\\
&\leq C\iT \left( \|\Sd\|_{L^{r'}(\o)} + \|(\vd \otimes \vd) \Phi_\delta(|\vd|^2)\|_{L^{\frac{5r}{6}}(\o)} +\|\sd\|_{L^2(\po)} + \|\f\|_{(V_r)^*}\right)^{z'}\d t\\
&\leq  C ~\text{uniformly with respect to } \delta.
\end{align*}
Finally, the uniform estimates and the Aubin--Lions lemma, complemented with the Trace theorem conclude that for subsequences,
\begin{subequations}\label{konve}
\begin{align}
\vd &\tow^* \vv &&\text{weakly$^*$ in } L^\infty(0,T;H), \label{vvlih}\\
\vd &\tow \vv &&\text{weakly in } L^r(0,T;V_r),\label{vvvr} \\
(\vd \otimes \vd) \Phi_\delta(|\vd|^2) &\to (\vv \otimes \vv) &&\text{strongly in }~ L^{\rho}(Q) ~\text{ for }~\rho \in \left[1, 5r/6 \right), \label{vdovd}\\
\pt \vd &\tow \pt \vv &&\text{weakly in } L^{z'}(0,T;(V_z)^*), \label{ptvniekde} \\
\vd &\to \vv &&\text{strongly in } L^r(0,T;L^2(\o)), \\
\vd &\to \vv &&\text{strongly in } L^\gamma(Q) \text{ for } \gamma\in \left[1,5r/3 \right), \label{strong53}\\
\Sd &\tow \S &&\text{weakly in } L^{r'}(Q), \label{Sconv}\\
\sd &\tow^* \sg &&\text{weakly$^*$ in } L^\infty(0,T;L^2(\po)), \label{shran}\\
\vd &\tow^* \vv &&\text{weakly$^*$ in } L^{\infty}(0,T; L^2(\po)), \\
\vd &\to \vv &&\text{strongly in } L^1(\Gamma). \label{silnehran}
\end{align}
\end{subequations}
Then, we consider $\vp \in L^z(0,T;V_z)$ in~\eqref{WFNSre}, integrate over $t\in(0,T)$, and after proceeding with $\delta \to 0_+$ while using the results from~\eqref{konve}, we obtain
\begin{equation*}
\iT \langle \pt \vv, \vp \rangle_{V_z} \d t + \iq (\S-(\vv \otimes \vv)): \nabla \vp \d x \d t + \alpha \ig \sg \cdot \vp \d S \d t = \iT \langle \f,  \vp \rangle_{V_z} \d t.
\end{equation*}
Therefore, the weak formulation~\eqref{WFNSr} holds for almost every time $t \in (0,T)$. Moreover, the results \eqref{vvlih}, \eqref{vvvr}, and \eqref{ptvniekde} imply that $\vv \in \C_w([0,T];H)$.

\subsubsection*{Identification on the boundary}

By virtue of \eqref{silnehran}, we can use the Egoroff theorem to get that for every $\zeta>0$ there exists $\Gamma_\zeta$ which satisfies $|\Gamma \setminus \Gamma_\zeta|<\zeta$ and $\vd \to \vv $ strongly in $L^\infty (\Gamma_\zeta)$. Then, using also~\eqref{shran},
\begin{equation*}
\int_{\Gamma_\zeta} \sd \cdot \vd \d x \d t \to \int_{\Gamma_\zeta} \sg \cdot \vv \d x \d t ~~\text{ as }~~ \delta \to 0_+.
\end{equation*}
Then, from Lemma~\ref{Bgraf}, $(\sg,\vv) \in \B$ almost everywhere on $\Gamma_\zeta$, and if we let $\zeta \to 0_+$, we obtain the identification of $\B$ almost everywhere on $\Gamma$, and also that for all $\zeta>0$,
\begin{equation}\label{slabasv1}
\sd \cdot \vd \tow \sg \cdot \vv \text{ weakly in } L^1(\Gamma_\zeta).
\end{equation}

\subsubsection*{Identification inside the domain}

Identification of the graph $\A$ is not so straightforward, especially due to the lack of proper duality pairing in the convective term and consequently in possible non-validity of the energy equality for the limiting equation. We start with subtracting the weak formulation for $\vd$ \eqref{WFNSre} from the one for $\vv$ \eqref{WFNSr}, and integrating the difference over time $(0,T)$, to deduce that
\begin{align*}
\iT \langle \pt (\vd -\vv), \vp \rangle_{V_z} \d t &- \iq \left((\vd \otimes \vd) \Phi_\delta(|\vd|^2)- \vv \otimes \vv\right): \nabla \vp \d x \d t  \\
&+ \iq (\Sd -\S): \DD \vp \d x \d t + \alpha \ig (\sd- \sg) \cdot \vp \d S \d t = 0
\end{align*}
holds for every  $\vp \in L^z(0,T;V_z)$. Consider\footnote{Here the space $\C^\infty_0([0,T];\C^\infty_{0, \diver}(\o))$ is defined as
 $$
 \C^\infty_0([0,T];\C^\infty_{0, \diver}(\o)):=\{\w \in \C^\infty(\overline{Q}); \diver \w = 0 \text{ in } Q, \text{ supp } \w \subset \subset Q\}.
 $$
}
$\vp \in\C^\infty_0([0,T];\C^\infty_{0, \diver}(\o))$, then the boundary term vanishes and we obtain
\begin{align*}
\iq  (\vd -\vv)\cdot \pt \vp \d x \d t = \iq \left((\Sd -\S) + \vv \otimes \vv - (\vd \otimes \vd) \Phi_\delta(|\vd|^2)\right)\!:\! \nabla \vp \d x \d t.
\end{align*}
For further purposes, let us denote
\begin{equation}\label{uGG}
\begin{aligned}
\u^\delta &:= \vd - \vv, \\
\G^\delta_1 &:= \Sd - \S, \\
\G^\delta_2 &:= \vv \otimes \vv - (\vd \otimes \vd) \Phi_\delta(|\vd|^2).
\end{aligned}
\end{equation}
In what follows, we use the result from~\cite[Theorem 2.2 and Corollary 2.4]{BDS}, which we first adapt to our setting.

\begin{lemma}[Breit, Diening, Schwarzacher (2013)]\label{BDS}
Let $Q_0\subset \subset Q$ and let $Q_0 = I_0 \times B_0$. Assume that for $\delta \in (0,1)$,
\begin{align*}
\u^\delta &\tow \0 &&\text{weakly in } L^r(I_0; W^{1,r}_{\diver}(B_0)), \\
\u^\delta &\tow^* \0 &&\text{weakly$^*$ in } L^\infty(I_0; L^2(B_0)), \\
\u^\delta &\to \0 &&\text{strongly in } L^1(Q_0), \\
\G^\delta_1 &\tow \0 &&\text{weakly in } L^{r'}(Q_0), \\
\G^\delta_2 &\to \0 &&\text{strongly in } L^{1+\e}(Q_0).
\end{align*}
as $\delta \to 0_+$. Also, assume that for every $\vp \in \C^\infty_0(I_0;\C^{\infty}_{0, \diver}(B_0))$
\begin{equation*}
\int_{Q_0} \u^\delta \cdot \pt \vp - (\G^\delta_1+\G^\delta_2):\nabla \vp \d x \d t = 0
\end{equation*}
holds, which is a weak formulation of
\begin{equation*}
\pt \u^\delta - \diver (\G^\delta_1+\G^\delta_2) = - \nabla p^\delta.
\end{equation*}
Then there exists $\xi \in \C^{\infty}_0(Q_0)$ such that
\begin{equation}\label{1816}
\chi_{\frac{1}{8}Q_0} \leq \xi \leq \chi_{\frac{1}{6}Q_0},
\end{equation}
and for every $k \in \N$ there exists $\{Q_{\delta,k} \}_{\delta \in (0,1)}$ fulfilling
\begin{equation}\label{qnk}
Q_{\delta,k} \subset Q, ~~\limsup_{\delta \to 0_+} |Q_{\delta,k}| \leq 2^{-k}
\end{equation}
such that for every $\oS \in L^{r'}(Q)$,
\begin{equation}\label{hvi}
\limsup_{\delta \to 0_+} \left| \iq (\G^\delta_1 + \oS)\cdot \nabla \u^\delta \xi \chi_{Q\setminus Q_{\delta,k}} \d x \d t\right|\leq C 2^{\frac{-k}{r}}.
\end{equation}
\end{lemma}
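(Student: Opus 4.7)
The plan is to invoke the solenoidal parabolic Lipschitz truncation technique of Breit, Diening, and Schwarzacher. The driving idea is that although $\u^\delta$ has too little time-regularity to be its own test function, one may manufacture a Lipschitz surrogate $\u^{\delta,k}$ that (i) coincides with $\u^\delta$ off a small ``bad set'' $Q_{\delta,k}$, (ii) has gradient pointwise bounded by roughly $2^k$, and (iii) remains divergence-free. Testing the parabolic equation for $\u^\delta$ with a properly corrected version of $\u^{\delta,k}\xi$ then yields, after integration by parts in time, exactly the desired control on the good set $Q\setminus Q_{\delta,k}$.

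First, I would fix $\xi\in\C^\infty_0(Q_0)$ satisfying \eqref{1816} via a mollified cutoff of $\chi_{\frac{1}{7}Q_0}$. The bad set is specified through the parabolic Hardy--Littlewood maximal function $M$, roughly
\begin{equation*}
Q_{\delta,k} := \left\{(t,x)\in\spt\xi\;:\;M\bigl(|\nabla\u^\delta|^r\bigr)(t,x)+M\bigl(|\G^\delta_1|^{r'}\bigr)(t,x)+M\bigl(|\G^\delta_2|\bigr)(t,x)>\lambda_{\delta,k}^r\right\},
\end{equation*}
with the level $\lambda_{\delta,k}\in[2^k,2^{k+C}]$ chosen by a pigeonhole argument so that in addition the annular averages of $\u^\delta$ near $\partial Q_{\delta,k}$ are controlled. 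The uniform-in-$\delta$ bounds on $\nabla\u^\delta$, $\G^\delta_1$, $\G^\delta_2$ in $L^r$, $L^{r'}$, $L^1$, combined with the weak-type estimate for $M$, give $\limsup_{\delta\to 0_+}|Q_{\delta,k}|\le C\,2^{-k}$, which is \eqref{qnk}.

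Next, the Lipschitz surrogate $\u^{\delta,k}$ is produced by a Whitney decomposition of $Q_{\delta,k}$ into parabolic cylinders $\{Q_j\}$ together with a subordinate partition of unity and local averages of $\u^\delta$ over slightly enlarged cylinders; on each $Q_j$ a Bogovskij-type corrector is added to cancel the divergence produced by multiplying $\u^\delta$ by the partition. Standard Calder\'on--Zygmund estimates together with the definition of $Q_{\delta,k}$ give $\|\nabla\u^{\delta,k}\|_{L^\infty(Q_0)}\le C\lambda_{\delta,k}$ and $\u^{\delta,k}\equiv\u^\delta$ on $Q_0\setminus Q_{\delta,k}$. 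Inserting $\u^{\delta,k}\xi$ (after a further solenoidal correction that absorbs $\u^{\delta,k}\cdot\nabla\xi$) as a test function in the equation $\pt\u^\delta-\diver(\G^\delta_1+\G^\delta_2)=-\nabla p^\delta$ removes the pressure by solenoidality, and integration by parts in time yields an identity in which the integral on the left-hand side of \eqref{hvi} is the leading term after splitting via $\chi_{Q\setminus Q_{\delta,k}}+\chi_{Q_{\delta,k}}$ and adding $\oS:\nabla\u^\delta$. The exponent $2^{-k/r}$ arises from H\"older's inequality between $\G^\delta_1+\oS\in L^{r'}$ and $\nabla\u^\delta\in L^r$ restricted to $Q_{\delta,k}$, contributing the factor $|Q_{\delta,k}|^{1/r}\lesssim 2^{-k/r}$. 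The remaining error terms involve $\u^{\delta,k}\pt\xi$ (absorbed by the strong $L^1$ convergence of $\u^\delta$ to $\0$) and $\G^\delta_2:\nabla\u^{\delta,k}$ (absorbed by the strong $L^{1+\e}$ convergence of $\G^\delta_2$ together with the $L^\infty$-bound on $\nabla\u^{\delta,k}$ and the smallness of $|Q_{\delta,k}|$).

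The principal obstacle is the construction of a \emph{divergence-free} Lipschitz truncation in the parabolic setting: naive Lipschitz extensions destroy solenoidality, so one must couple the Whitney decomposition with a carefully designed Bogovskij corrector and verify that this corrector still obeys the same pointwise $2^k$-bound on its gradient outside $Q_{\delta,k}$. Handling the parabolic scaling, which couples time and space in an anisotropic manner, while simultaneously ensuring that the pressure $\nabla p^\delta$ never reappears, constitutes the technical heart of the Breit--Diening--Schwarzacher construction. Once this truncation machinery is in place, the estimate \eqref{hvi} follows from the testing and H\"older interpolation described above, and the cutoff $\xi$ together with the family $\{Q_{\delta,k}\}$ then realise the claim of the lemma.
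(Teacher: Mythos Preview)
Your sketch is faithful to the solenoidal parabolic Lipschitz truncation of Breit--Diening--Schwarzacher and correctly identifies the key ingredients: the parabolic maximal-function bad set, the pigeonhole choice of level, the Whitney/Bogovski\u{\i} construction of a divergence-free Lipschitz surrogate, and the H\"older bookkeeping that produces the factor $2^{-k/r}$. Note, however, that the paper does \emph{not} prove this lemma at all: it is quoted verbatim (after adaptation of notation) from \cite[Theorem~2.2 and Corollary~2.4]{BDS} and used as a black box. So there is no ``paper's own proof'' to compare against; your outline is essentially a summary of the external reference, and as such it is on target.
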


Then the triplet $(\u^\delta, \G^\delta_1, \G^\delta_2)$ defined in \eqref{uGG} satisfies assumptions of the Lemma~\ref{BDS}. Due to Lemma~\ref{pidilema}, for $\D$ we can find $\tS$ such that $(\tS, \D)\in \A$ almost everywhere in $Q$. In \eqref{hvi}, we set $\oS:=\S -\tS$ to get that
\begin{align*}
\limsup_{\delta \to 0_+} &\left| \iq (\Sd - \tS):(\Dd - \D) \xi \chi_{Q\setminus Q_{\delta,k}} \d x \d t \right| \\
&= \limsup_{\delta \to 0_+} \left| \iq (\G^\delta_1 + \oS): \nabla \u^\delta \xi \chi_{Q\setminus Q_{\delta,k}} \d x \d t \right|\leq C 2^{\frac{-k}{r}}.
\end{align*}
Due to \eqref{1816}, $\xi \geq \chi_{\frac{1}{8}Q_0}$ and since $(\tS,\D) \in \A$ and $(\Sd, \Dd) \in \A$, the product in the first integral is non-negative thanks to the monotonicity of $\A$, and we have
\begin{equation}\label{limhvi}
\limsup_{\delta \to 0_+} \int_{\frac{1}{8}Q_0} \left| (\Sd - \tS): (\Dd - \D) \right| \chi_{Q\setminus Q_{\delta,k}}\d x \d t \leq C 2^{\frac{-k}{r}}.
\end{equation}
For any $a\in(0,1)$, we can provide the following computation,
\begin{align*}
\int_{\frac{1}{8}Q_0} &\left| (\Sd - \tS): (\Dd - \D) \right|^a \d x \d t \\
&= \int_{\frac{1}{8}Q_0} \left| (\Sd - \tS): (\Dd - \D) \right|^a \chi_{Q_{\delta,k}}\d x \d t\\
&\quad + \int_{\frac{1}{8}Q_0} \left| (\Sd - \tS): (\Dd - \D) \right|^a \chi_{Q\setminus Q_{\delta,k}}\d x \d t \\
&\leq \left(\int_{\frac{1}{8}Q_0} \left| (\Sd - \tS): (\Dd - \D) \right| \chi_{Q_{\delta,k}}\d x \d t \right)^a |Q_{\delta,k}|^{1-a}\\
&\quad + \left(\int_{\frac{1}{8}Q_0} \left| (\Sd - \tS):(\Dd - \D) \right| \chi_{Q\setminus Q_{\delta,k}}\d x \d t \right)^a |Q|^{1-a}\\
&\leq C |Q_{\delta,k}|^{1-a} + C \left(\int_{\frac{1}{8}Q_0} \left| (\Sd - \tS): (\Dd - \D) \right| \chi_{Q\setminus Q_{\delta,k}}\d x \d t \right)^a.
\end{align*}
Then, as $k \to \infty$, using \eqref{qnk} and \eqref{limhvi}, we obtain that as $\delta \to 0_+$,
\begin{equation*}
\int_{\frac{1}{8}Q_0} \left| (\Sd - \tS): (\Dd - \D) \right|^a \d x \d t \leq C 2^{\frac{-k}{r}} \to 0.
\end{equation*}
However, then also
\begin{equation*}
\left| (\Sd - \tS): (\Dd - \D) \right|^a \to 0 \text{ strongly in } L^1\left(\frac{1}{8}Q_0\right).
\end{equation*}
Due to the Egoroff theorem, for every $\zeta>0$ there exists $Q_\zeta$ such that $|\frac{1}{8}Q_0 \setminus Q_\zeta|\leq \zeta$, and
\begin{equation*}
\left| (\Sd - \tS): (\Dd - \D) \right|^a \to 0 \text{ strongly in } L^\infty(Q_\zeta).
\end{equation*}
Consequently,
\begin{equation}\label{CCC1}
(\Sd - \tS): (\Dd - \D) \to 0 \text{ strongly in } L^\infty(Q_\zeta).
\end{equation}
Since $\lim_{\delta \to 0_+} \int_{Q_\zeta} \tS: (\Dd - \D)\d x \d t = 0 $, which follows from~\eqref{vvvr}, then from~\eqref{CCC1} follows also
\begin{equation*}
\lim_{\delta \to 0_+} \int_{Q_\zeta}\Sd : (\Dd - \D)\d x \d t = 0,
\end{equation*}
which finally implies (using the weak convergence result for $\Sd$ \eqref{Sconv}),
\begin{equation*}
\lim_{\delta \to 0_+} \int_{Q_\zeta}\Sd : \Dd \d x \d t = \int_{Q_\zeta}\S : \D \d x \d t.
\end{equation*}
According to Lemma~\ref{Agraf}, $(\S, \D) \in \A$ almost everywhere in $Q_\zeta$, and we can proceed with~$\zeta \to 0_+$ to obtain the identification of~$\A$ almost everywhere in~$Q$. Also, we have that for all~$\zeta>0$
\begin{equation}\label{slabasd1}
\Sd : \Dd \tow \S : \D \text{ weakly in } L^1(Q_\zeta).
\end{equation}

\subsubsection*{Energy inequality}

For $0<\kappa \ll 1$ and $t \in (0,T-\kappa)$, let $\eta$ be defined as in~\eqref{eta}. We set $\vp:= \vd$ in~\eqref{WFNSre}, multiply it by~$\eta$, and integrate the result over~$\tau \in (0,T)$,
\begin{align*}
\frac{1}{2}\iT \dt \|\vd\|_H^2 \eta \d \tau &- \iq \left((\vd \otimes \vd) \Phi_\delta(|\vd|^2)\right) : \nabla \vd \eta \d x  \d \tau \\
&+ \iq \Sd : \Dd \eta \d x \d \tau + \alpha \ig \sd \cdot \vd \eta \d S \d \tau = \iT \langle \f, \vd \rangle_{V_r} \eta \d \tau.
\end{align*}
Next, we integrate by parts in the first term, use the convective term cancellation due to~\eqref{vorezcanc} and properties of~$\eta$, to obtain
\begin{align*}
\frac{1}{2\kappa} \int_t^{t+\kappa} \|\vd(\tau)\|_H^2 \d \tau &+ \int_{Q_{t+\kappa}} \Sd:\Dd \eta \d x \d \tau + \alpha \int_{\Gamma_{t+\kappa}} \sd \cdot \vd \eta \d S \d \tau \\
&= \int_0^{t+\kappa} \langle \f, \vd \rangle_{V_r} \eta \d \tau + \frac12 \|\vv_0\|_H^2.
\end{align*}
The next step is the limit as $\delta \to 0_+$. For the first term, we can use the weak lower semicontinuity of the $H$-norm. For the products $(\Sd:\Dd)$ and $(\sd \cdot \vd)$, we use the monotonicity of the graphs and that thanks to \eqref{slabasd1} and \eqref{slabasv1}, with the use of the Biting lemma (from~\cite{BaMu}), there exist sequences $\{Q_l\}_{l \in \N}$ and $\{\Gamma_l\}_{l\in\N}$ such that as $l \to \infty$, (for subsequences)
\begin{align*}
|Q \setminus Q_l| &\to 0_+ ~\text{ and }~ \Sd : \Dd \tow \S : \D \text{ weakly in } L^1(Q_l), \\
|\Gamma \setminus \Gamma_l| &\to 0_+ ~\text{ and }~ \sd \cdot \vd \tow \sg \cdot \vv \text{ weakly in } L^1(\Gamma_l).
\end{align*}
For the duality term, we use \eqref{vvvr}, and get that
\begin{align*}
\frac{1}{2\kappa} \int_t^{t+\kappa} \|\vv(\tau)\|_H^2 \d \tau &+ \int_{Q_{t+\kappa}\cap Q_l} \S:\D \eta \d x \d \tau + \alpha \int_{\Gamma_{t+\kappa} \cap \Gamma_l} \sg\cdot\vv \eta \d S \d \tau \\
&\leq \int_0^{t+\kappa} \langle \f, \vv \rangle_{V_r} \eta \d \tau + \frac12 \|\vv_0\|_H^2.
\end{align*}
Next, we proceed with $l \to \infty$, then $Q_{t+\kappa}\cap Q_l \to Q_{t+\kappa}$ and $\Gamma_{t+\kappa} \cap \Gamma_l \to \Gamma_{t+\kappa}$, and finally, thanks to $\vv \in \C_w([0,T];H)$ and the fact that the other terms are well-defined, we can pass with~$\kappa \to 0_+$ to obtain the energy inequality~\eqref{enineqr} for any~$t \in (0,T)$.

\subsubsection*{Initial data attainment}

Similarly as in the previous part, we consider $\eta$ from \eqref{eta}, and multiply \eqref{WFNSre} by this $\eta$, and integrate over $\tau \in(0,T)$,
\begin{align*}
\iT \langle\pt \vd, \vp\rangle_{V_r} \eta \d \tau &- \iq \left((\vd \otimes \vd) \Phi_\delta(|\vd|^2)\right) : \nabla \vp \eta \d x  \d \tau \\
&+ \iq \Sd : \DD \vp \eta\d x \d \tau + \alpha \ig \sd \cdot \vp \eta \d S \d \tau = \iT \langle \f, \vp \rangle_{V_r} \eta \d \tau.
\end{align*}
As $\vp$ is independent of $t$, we can integrate by parts in the first term and subsequently proceed with the limit $\delta \to 0_+$ using the arguments from the previous part and \eqref{vdovd} for the convective term,
\begin{align*}
\frac{1}{\kappa} \int_t^{t+\kappa} (\vv, \vp)_H \d \tau &- \int_{Q_{t+\kappa}} (\vv \otimes \vv) : \nabla \vp \eta \d x  \d \tau + \int_{Q_{t+\kappa}} \S:\DD \vp \eta \d x \d \tau \\
&+ \alpha \int_{\Gamma_{t+\kappa}} \sg\cdot\vp \eta \d S \d \tau = \int_0^{t+\kappa} \langle \f, \vp \rangle_{V_r} \eta \d \tau + \frac12 (\vv_0, \vp)_H^2 \eta(0).
\end{align*}
Due to the arguments that are all explained in the previous sections, we can proceed with~$\kappa \to 0_+$ and~$t \to 0_+$, using that $\vp \in V_z$ is arbitrary and $\vv \in \C_w([0,T];H)$ and obtain
\begin{equation*}
\vv(t) \tow \vv_0 ~\text{ weakly in }~ H.
\end{equation*}
Also, taking the limes superior in the energy inequality \eqref{enineqr}, we obtain that
$$
\limsup_{t \to 0_+} \|\vv(t)\|_H^2 \leq \|\vv_0\|_H^2,
$$
and these two information imply the strong convergence in $H$ as claimed in~\eqref{enineqr}.

\appendix

\section{Orthonormal basis of \texorpdfstring{$V$}{V}}\label{basis}

For $\alpha>0$, define a scalar product on $V$ by
\begin{equation}\label{scalarproduct}
(\u, \vv)_V := \io \DD \u : \D \d x + \alpha \ipo \u \cdot \vv \d S.
\end{equation}
Thanks to the Korn inequality and the definition of the $W^{1,2}$-norm, this scalar product \eqref{scalarproduct} on $V$ is equivalent to the norm on $V$ defined in \eqref{V}. Moreover, one can show, see Lemma~\ref{baza} that there exists a basis of $V$, which is orthogonal in $V$ with respect to the scalar product defined in~\eqref{scalarproduct} and orthonormal in $H$. We denote such basis in what follows as $\{\w_i\}_{i=1}^{\infty}$.

\noindent
\textit{Construction.} Set $V^1 = V$, find $\lambda_1 := \min_{\|\u\|_H=1}(\u,\u)_V$, and denote by $\w_1$ the minimizer, i.e., $\lambda_1 = (\w_1, \w_1)_{V}$.

For every $i \in \N$,
\begin{subequations}\label{baza}
\begin{align}
\text{define } V^{i+1} &:= \{\vv \in V; (\vv, \w_j)_V =0 \text{ for every } j=1, \ldots, i \} \label{Vi}\\
\text{find } \lambda_{i+1} &:= \min_{\u \in V^{i+1}, \|\u\|_H=1}(\u,\u)_V, \label{li}\\
\text{ and denote } &\w_{i+1} \text{ the minimizer, } \lambda_{i+1} = (\w_{i+1}, \w_{i+1})_{V}.\label{wi}
\end{align}
\end{subequations}

\begin{lemma}
The sequence $\{\w_j\}_{j\in \N}$ defined in~\eqref{baza} is a basis of~$V$ and~$H$, it is orthogonal in~$V$ and orthonormal in~$H$. Also, the sequence $\{\lambda_i\}_{i\in \N}$ is non-decreasing with $\lim_{i \to \infty} \lambda_i = +\infty$. For every $i \in \N$, $\lambda_i$  and $\w_i$ solve the problem
\begin{subequations}\label{basisprob}
\begin{align}
- \diver \DD \w_i &= \lambda_i \w_i &&\text{in } \o, \label{ino}\\
\DD\w_i\,\n + \alpha \w_i &= \lambda_i \beta \w_i &&\text{on } \po, \label{onpo}
\end{align}
\end{subequations}
in the weak sense. Moreover, for $P^N$, a projection of $V$ to the linear hull of $\{\w_i\}_{i=1}^N$ defined by
\begin{equation}\label{proj}
P^N\u := \sum_{i=1}^N (\u , \w_i)_H \w_i,
\end{equation}
it holds that for any $\u \in V$
\begin{subequations}\label{vlproj}
\begin{align}
\|P^N \u \|_H &\leq \|\u\|_H, \label{vlH} \\
\|P^N \u \|_V &\leq \|\u\|_V, \label{vlV} \\
P^N \u &\to \u ~\text{ strongly in } V \text{ as } N \to +\infty. \label{vlconv}
\end{align}
\end{subequations}
\end{lemma}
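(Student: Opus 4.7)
The plan is to follow a standard spectral construction on the Hilbert space $(V,(\cdot,\cdot)_V)$. For each $i\ge 1$, I would produce a minimizer of \eqref{li} by taking a minimizing sequence $\{\u^n\}\subset V^i$ with $\|\u^n\|_H=1$; the Korn inequality together with $\alpha>0$ ensures that \eqref{scalarproduct} is equivalent to the $V$-norm, hence $\{\u^n\}$ is $V$-bounded; the compact embedding $V\hookrightarrow\hookrightarrow H$ (compactness of $W^{1,2}(\Omega)\hookrightarrow L^2(\Omega)$ and of the trace $W^{1,2}(\Omega)\to L^2(\partial\Omega)$) combined with weak lower semicontinuity yields a limit $\w_i\in V$ achieving the infimum; weak closedness of $V^i$ (it is the intersection of kernels of continuous linear functionals on $V$) ensures $\w_i\in V^i$. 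Also $\lambda_1>0$, for otherwise $\DD\w_1=\0$ in $\Omega$ and $\w_1=\0$ on $\partial\Omega$ would force $\w_1=\0$ by Korn, contradicting $\|\w_1\|_H=1$; the monotonicity of $\{\lambda_i\}$ then propagates positivity.

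Next I would derive and globalise the Lagrange identity. The constrained Euler--Lagrange calculation produces
\begin{equation*}
(\w_i,\vv)_V=\lambda_i(\w_i,\vv)_H \quad\text{for all } \vv\in V^i.
\end{equation*}
I would then run a joint induction on $i$, establishing simultaneously that $\{\w_j\}_{j\le i}$ is $H$-orthonormal and that this identity extends to all of $V$: granted the identity on $V$ for $\w_1,\ldots,\w_{i-1}$, testing with $\vv=\w_i$ yields $(\w_j,\w_i)_V=\lambda_j(\w_j,\w_i)_H$, and since $\w_i\in V^i$ kills the left-hand side while $\lambda_j>0$, we conclude $(\w_j,\w_i)_H=0$ for $j<i$; the $V$-decomposition $\vv=\vv^i+\sum_{j<i}c_j\w_j$ with $\vv^i\in V^i$, combined with the just-established $H$-orthogonality, then bootstraps the identity on $V^i$ to all of $V$. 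This simultaneously yields the $V$-orthogonality $(\w_i,\w_j)_V=\lambda_i\delta_{ij}$. Unpacking the scalar products in the global identity and integrating by parts against test functions compactly supported in $\Omega$ delivers \eqref{ino} weakly, and arbitrary trace tests then yield \eqref{onpo}.

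Divergence $\lambda_i\to+\infty$ is a compactness argument: a bounded subsequence of $\{\lambda_i\}$ would keep $\{\w_i\}$ bounded in $V$, hence relatively compact in $H$, contradicting $\|\w_i-\w_j\|_H^2=2$. The projection properties follow quickly: \eqref{vlH} is Bessel's inequality in $H$; the computation $(P^N\u,\w_j)_V=\lambda_j(\u,\w_j)_H=(\u,\w_j)_V$ for $j\le N$ identifies $P^N\u$ as the $V$-orthogonal projection onto $\mathrm{span}\{\w_1,\ldots,\w_N\}$, giving \eqref{vlV} and $\u-P^N\u\in V^{N+1}$. The variational characterisation then yields
\begin{equation*}
\lambda_{N+1}\,\|\u-P^N\u\|_H^2\le\|\u-P^N\u\|_V^2\le\|\u\|_V^2,
\end{equation*}
so $P^N\u\to\u$ in $H$ for every $\u\in V$, and by density for every $\u\in H$. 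For strong $V$-convergence, any $\u\in V$ that is $V$-orthogonal to every $\w_i$ would, by the eigenvalue identity, satisfy $(\u,\w_i)_H=0$ for all $i$, hence $\u=0$ in $H$; since the $L^2$-part of $\|\cdot\|_H$ controls $\u$ almost everywhere in $\Omega$, this forces $\u=0$ in $V$, completing the basis claim and proving \eqref{vlconv}. The genuinely delicate step is the joint induction that promotes the Lagrange identity from $V^i$ to all of $V$ while simultaneously establishing $H$-orthonormality: the two statements feed on one another and both rely on $\lambda_j>0$; everything else (minimization, compactness, PDE interpretation, completeness) is routine once the compact embedding $V\hookrightarrow\hookrightarrow H$, the Korn inequality, and the identity on $V^i$ are in hand.
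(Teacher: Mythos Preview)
Your proposal is correct and follows essentially the same approach as the paper: minimizing sequences plus compact embedding $V\hookrightarrow\hookrightarrow H$ for existence of $\w_i$, the Euler--Lagrange identity on $V^i$ extended to $V$ via $H$-orthogonality, the $\|\w_i-\w_j\|_H^2=2$ compactness contradiction for $\lambda_i\to\infty$, and the identification of $P^N$ with the $V$-orthogonal projection (equivalently, Bessel in $V$ with the orthonormal system $\{\w_i/\sqrt{\lambda_i}\}$). The only organizational difference is in the completeness step: the paper argues directly that any $\vv\perp_V\w_i$ for all $i$ would lie in every $V^i$ and hence satisfy $\lambda_i\le\|\vv\|_V^2$, contradicting $\lambda_i\to\infty$; you instead first establish $H$-completeness via the inequality $\lambda_{N+1}\|\u-P^N\u\|_H^2\le\|\u\|_V^2$ and then feed this back to get $V$-completeness. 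Both routes are standard and equivalent; the paper's is slightly shorter, while yours makes the positivity $\lambda_j>0$ (tacit in the paper) explicit.
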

\begin{proof}
Orthogonality in $V$ is evident from the definition of the spaces $V^{i}$ in \eqref{Vi}. The fact that for every $i$, $\|\w_i\|_H=1$, follows from \eqref{li}. We show that for every $i \in \N$, $\w_i$ exists and
\begin{equation}\label{wili}
(\w_i, \vp)_V = \lambda_i (\w_i, \vp)_H ~\text{ for all }~\vp \in V.
\end{equation}
This is a weak formulation of \eqref{basisprob} and also implies orthogonality in $H$.

We start with taking $\{\u^n\}_{n \in \N}$, a minimizing sequence to
\begin{equation*}
(\u^n, \u^n)_V = \io |\DD \u^n|^2 \d x + \alpha \ipo |\u^n|^2 \d S ~\text{ with }~ \|\u^n\|_H = 1 ~\text{ for all }~ n \in \N.
\end{equation*}
From reflexivity of $V$ and its compact embedding in $H$ we get that
\begin{align*}
\u^n &\tow \w_1 &&\text{weakly in } V, \\
\u^n &\to \w_1 &&\text{strongly in } H.
\end{align*}
Therefore $\w_1 \in V$ exists, $\|\w_1\|_H=1$, and for every $\vv \in V$, $\|\vv\|_H=1$, there holds
\begin{equation}\label{mini}
\lambda_1 = (\w_1, \w_1)_V \leq (\vv, \vv)_V
\end{equation}
by weak lower semicontinuity of the norm.

In \eqref{mini}, set $\vv := (\w_1+\e\vp)\|\w_1+\e\vp\|_H^{-1}$ where $\e>0$ and $\vp \in V$ are arbitrary. Then $\vv \in V$, $\|\vv\|_H=1$, and we get that
\begin{align*}
0 &\leq \frac{(\w_1+\e\vp, \w_1+\e\vp)_V }{\|\w_1+\e\vp\|^2_H}- (\w_1, \w_1)_V \\
&= \frac{(\w_1, \w_1)_V }{\|\w_1+\e\vp\|^2_H}- (\w_1, \w_1)_V + 2 \e \frac{(\w_1,\vp)_V }{\|\w_1+\e\vp\|^2_H} + \e^2 \frac{(\vp, \vp)_V }{\|\w_1+\e\vp\|^2_H} \\
&= \frac{\e}{\|\w_1+\e\vp\|^2_H} \left( -\lambda_1(2(\w_1,\vp)_H + \e \|\vp\|^2_H) + 2 (\w_1,\vp)_V + \e (\vp, \vp)_V \right),
\end{align*}
were we used that $(\w_1, \w_1)_V/\lambda_1=1=\|\w_1\|_H$. Next, we divide the expression by $\e$ and take the limit $\e \to 0_+$ to obtain
\begin{equation*}
\lambda_1 (\w_1,\vp)_H \leq (\w_1,\vp)_V .
\end{equation*}
However, it works for $-\vp$ as well, and we obtain the equality
\begin{equation*}
(\w_1,\vp)_V = \lambda_1 (\w_1,\vp)_H ~\text{ for all }~ \vp \in V = V^1.
\end{equation*}
We can do the same for any (fixed) $i \in \N$ to obtain
\begin{equation}\label{phii}
(\w_i,\vp)_V = \lambda_i (\w_i,\vp)_H ~\text{ for all }~ \vp \in V^i.
\end{equation}
The next step is to show that \eqref{phii} is true for any $\vp \in V$, i.e., also for $\vp \in V \setminus V^i$. Note that according to \eqref{Vi},
\begin{equation*}
V = V^1 \supset \ldots \supset V^{i-1} \supset V^i \supset \ldots \implies V \setminus V^{i} \subset \bigcup_{j=1}^{i-1} V^j.
\end{equation*}
Now, let $j<i$ be arbitrary. Due to \eqref{phii}, it holds that
\begin{equation}\label{wjphij}
(\w_j,\vp)_V = \lambda_j (\w_j,\vp)_H ~\text{ for all }~ \vp \in V^j
\end{equation}
and $\w_j \in V^j$. Therefore, set $\vp:=\w_i$ in \eqref{wjphij} (note that $\w_i$ is admissible test function since $\w_i \in V^i \subset V^j$ as $j<i$) to get $(\w_j,\w_i)_V = \lambda_j (\w_j,\w_i)_H$. However, from the definition of $V^i$, $(\w_j,\w_i)_V =0$, and therefore also $(\w_j,\w_i)_H =0$. Since $i \in \N$ and  $j<i$ were arbitrary, we obtain \eqref{wili}.

Next, we study the sequence $\{\lambda_i\}_{i\in \N}$, namely, we want to show that it is non-decreasing with the limit equal to $+\infty$. The first fact is obvious. Regarding the unboundedness, let us assume that it is bounded. Then, from \eqref{wi} and from the reflexivity of $V$, it is weakly convergent in $V$, and from the compact embedding of $V$ into $H$, we get that it converges strongly in $H$, which means that it is Cauchy in $H$. However,
\begin{equation*}
\|\w_i - \w_j\|_H^2 = \|\w_i\|_H^2 + \|\w_j\|_H^2 -2(\w_i, \w_j)_H = 2,
\end{equation*}
which contradicts the Cauchy property.

To show that $\{\w_i\}_{i\in \N}$ is indeed a basis of $V$, we prove two claims: that there are no more eigenvectors $\w_i$, and that there are no more eigenvalues $\lambda_i$.

First, assume that there exists $\vv \in V$ such that $\|\vv\|_V \neq 0$, $\|\vv\|_H = 1$, and $(\vv, \w_i)_V =0$ for every $i \in \N$. The last claim means that $\vv \in V^i$ for every $i$, i.e.,
\begin{equation*}
\lambda_i = \min_{\u \in V^{i}, \|\u\|_H=1}(\u,\u)_V \leq (\vv, \vv)_V.
\end{equation*}
Due to unboundedness of $\{\lambda_i\}_{i \in \N}$, taking the limit $i \to +\infty$ in this inequality results in contradiction with the assumption that $\vv \in V$.

For the second contradiction, assume that there is an eigenvalue $\lambda$, such that $\lambda \neq \lambda_i$ for every $i \in \N$, and that there exists $\w_\lambda \in V$ such that $\|\w_\lambda\|_V \neq 0$, $\|\w_\lambda\|_H = 1$, and
\begin{equation}\label{lam}
(\w_\lambda,\vp)_V = \lambda (\w_\lambda,\vp)_H \text{ for all } \vp \in V.
\end{equation}
For an arbitrary $i \in \N$, use $\vp := \w_\lambda$ in \eqref{wili}, use $\vp:= \w_i$ in \eqref{lam} and subtract from each other to get
\begin{equation*}
(\lambda - \lambda_i) (\w_\lambda,\w_i)_H = 0 \implies (\w_\lambda,\w_i)_H = 0.
\end{equation*}
However, either $(\w_\lambda,\w_i)_H = 0$ for every $i\in\N$ and we are back in the situation from the previous paragraph, i.e., that $\w_\lambda \in V^i$ for every $V^i$, which leads to a contradiction, or there exists $i$ such that $(\w_\lambda,\w_i)_H \neq 0$, but then necessarily $\lambda =\lambda_{i}$ which conflicts the assumption $\lambda \neq \lambda_i$ for every $i \in \N$.
Therefore, $\{\w_i\}_{i\in \N}$ is a basis of $V$, and by density, it is also a basis of $H$.

Finally, we prove the continuity of the projection $P^N$. Note that $\{\w_i/\sqrt{\lambda_i}\}_{i\in \N}$ is orthonormal basis in $V$ and compute
\begin{equation}\label{puu}
\begin{aligned}
(P^N \u, P^N \u)_V &= \left( \sum_{i=1}^N (\u , \w_i)_H \w_i, \sum_{j=1}^N (\u , \w_j)_H \w_j \right)_V
= \sum_{i=1}^N (\u , \w_i)_H^2 (\w_i, \w_i)_V \\
&= \sum_{i=1}^N \lambda_i (\u , \w_i)_H^2
= \sum_{i=1}^N \left(\u , \frac{\w_i}{\sqrt{\lambda_i}}\right)_V^2
\leq \sum_{i=1}^\infty \left(\u , \frac{\w_i}{\sqrt{\lambda_i}}\right)_V^2 = (\u, \u)_V.
\end{aligned}
\end{equation}
In the last equality, we used the fact that (for simplicity, $\vp_i := \frac{\w_i}{\sqrt{\lambda_i}}$ for every $i$)
\begin{equation}\label{basisformula}
\sum_{i=1}^\infty \left(\u , \vp_i \right)_V \vp_i = \u.
\end{equation}
This is true, as it is equivalent to
\begin{equation*}
 \left(\sum_{i=1}^\infty \left(\u , \vp_i \right)_V \vp_i, \vp_j \right)_V = \sum_{i=1}^\infty \left(\u , \vp_i \right)_V (\vp_i,\vp_j)_V = (\u, \vp_j)_V \text{ for every } j\in\N,
\end{equation*}
which holds thanks to the orthonormality of $\{ \vp_j \}_{j \in \N}$ in $V$.

Due to the equivalence of the norm induced by the scalar product on $V$ with the norm on $V$, \eqref{puu} proves the estimate of the $V$-norms \eqref{vlV} and the same arguments are used to estimate the $H$-norms \eqref{vlH} (without the renormalizing by $\sqrt{\lambda_i}$). Also, from the last line of \eqref{puu} it is clear that $\|P^N\u-\u\|_V \to 0$ as $N \to \infty$, i.e., \eqref{vlconv}.
\end{proof}

\bibliographystyle{amsplain}



\providecommand{\bysame}{\leavevmode\hbox to3em{\hrulefill}\thinspace}
\providecommand{\MR}{\relax\ifhmode\unskip\space\fi MR }
\providecommand{\MRhref}[2]{%
  \href{http://www.ams.org/mathscinet-getitem?mr=#1}{#2}
}
\providecommand{\href}[2]{#2}

\end{document}